\renewcommand{\bf}{\textbf}
 \renewcommand{\to}{\rightarrow}
\newcommand{\R}{\mathbb{R}}
\newcommand{\Z}{\mathbb{Z}}
\newcommand{\characteristic}{\operatorname{char}} 
\newcommand{\rad}{\operatorname{rad}}
\newcommand{\im}{\operatorname{Im}} 
\newcommand{\ch}{\operatorname{ch}} 
\newcommand{\m}{\operatorname{m}} 
\newcommand{\Lie}{\operatorname{Lie}}
\newcommand{\rank}{\operatorname{rank}}
\newcommand{\Hom}{\operatorname{Hom}} 
\newcommand{\SL}{\operatorname{SL}}
\newcommand{\Spin}{\operatorname{Spin}}
\newcommand{\Isom}{\operatorname{Isom}}
\newcommand{\SO}{\operatorname{SO}}
\newcommand{\Sp}{\operatorname{Sp}}
\renewcommand\labelenumi{(\roman{enumi})}
\renewcommand\theenumi\labelenumi
\newtheorem{thm}{Theorem}[section] 
\newtheorem{prop}[thm]{Proposition} 
\newtheorem{lem}[thm]{Lemma}
\theoremstyle{definition}
\newtheorem{defn}[thm]{Definition}
\newtheorem{remk}[thm]{Remark} 
\newtheorem{hyp}[thm]{Hypothesis}
\begin{document}

\title{A new family of irreducible subgroups of the orthogonal algebraic groups\footnote{AMS Subject Classification 20G05, 20C33. keywords: algebraic group, irreducible representation, maximal subgroups}}

\author{Mika\"el Cavallin and Donna M. Testerman\footnote{The second author acknowledges the support of the Swiss National Science Foundation grant numbers 200021-153543 and 200021-156583.}}

\date{\today}
\maketitle

\begin{abstract}
Let $n\geq 3,$ and let $Y$ be a simply connected, simple algebraic group of type $D_{n+1}$ over an algebraically closed field $K.$ Also let $X$ be the subgroup of type $B_n$ of $Y,$ embedded in the usual way. In this paper, we correct an error in a proof of a theorem of Seitz \cite[8.7]{seitz}, resulting in the discovery of a new family of triples $(X,Y,V),$ where $V$ denotes a finite-dimensional, irreducible, rational $KY$-module, on which $X$ acts irreducibly. We go on to  investigate the impact of the existence of the  new examples on the classification of the maximal closed connected subgroups of the classical algebraic groups.  
\end{abstract}

\section{Introduction}\label{Introduction}     

Let $K$ be an algebraically closed field of characteristic $p\geq 0.$ In the 1950's, Dynkin determined the maximal closed connected subgroups of the simple classical type linear algebraic groups defined over $K$, assuming $\characteristic(K) = 0$ (see \cite{dynkin, dynkin_exc}); in 1987, 
Seitz \cite{seitz} established an analogous classification in the case where $\characteristic (K)>0$. The main step in both of these classifications is the determination of all triples $(X,Y,V),$ where $Y$ is a simple linear algebraic 
group defined over $K$, $X$ is a proper closed connected subgroup of $Y$, and $V$ is a non-trivial irreducible, finite-dimensional ($p$-restricted if $\characteristic (K)=p>0$) $KY$-module on which $X$ acts irreducibly.  The determination of these so-called ``irreducible triples'' is covered in the work of Dynkin \cite{dynkin, dynkin_exc} (in case $\characteristic (K)=0$), Seitz \cite{seitz} (in case $\characteristic (K) >0$ and $Y$ is a classical group), and Testerman \cite{test} (in case $\characteristic (K) >0$ and $Y$ is of exceptional type). The existence of an irreducible triple of the form $(X,Y,V)$ as above, arising from a rational representation $\rho : Y\to {\rm GL}(V)$, indicates that $\rho(X)$ is \emph{not} maximal in the smallest classical group ${\rm Isom}(V)$ 
containing both $\rho(X)$ and $\rho(Y)$, while  the large majority of tensor-indecomposable irreducible representations of a simple algebraic group give rise to maximal subgroups of the smallest classical group containing the image.

Recently,  the second author's PhD student Nathan Scheinmann discovered an irreducible triple which does not appear in \cite[Theorem 1, Table 1]{seitz}. Namely, take $K$ to be of characteristic 3, and $X = B_3$ embedded in the usual way in $Y=D_4$ as the stabilizer of a non-singular 1-space on the 8-dimensional natural module for $Y$. Consider the irreducible $KY$-module with highest weight $\lambda_1+\lambda_2+\lambda_3$, (here $\lambda_i$, $1\leq i\leq 4$, is a set of fundamental weights for $D_4$ and we label Dynkin diagrams as in \cite{Bourbaki}). The restriction of the highest weight to a maximal torus of $B_3$ shows the existence of a $B_3$-composition factor of highest weight $\omega_1+\omega_2+\omega_3$, ($\omega_i$, $i=1,2,3$, a set of fundamental  weights for $B_3$) and consulting \cite{luebeck}, one sees that these modules are both of dimension $384$, and hence $X$ acts irreducibly on $V$.

The absence of this example from \cite[Table 1]{seitz} is the result of an error in the proof of \cite[8.7]{seitz}. Here, we correct the error in this proof and, in so doing, establish the existence of a whole new family of modules $V$ for the group $Y=D_{n+1}$ on which $X=B_n$ acts irreducibly. For a fixed $n$ and a fixed $p$, there are finitely many modules $V$, but for each $n$ there exist infinitely many primes $p$ for which there is a new example. The precise description of the family is given in Theorem~\ref{Main_result_1} below.

The goal of this paper is two-fold: first we concentrate on the embedding $X=B_n\subset Y=D_{n+1}$ and determine all $p$-restricted irreducible representations of $Y$ whose restriction to $X$ is irreducible, thereby correcting \cite[8.7]{seitz}; see Theorem~\ref{Main_result_1} below. The second goal of the paper is to show that the existence of the new examples has no \emph{further} influence on the main results of \cite{seitz} and \cite{test}. Indeed, the proofs of the main theorems in these two articles depend on an inductive hypothesis, concerning the list of examples for smaller rank groups. The new family of examples for the pair $(B_n,D_{n+1})$ alters the inductive hypothesis and therefore requires one to take these new examples into consideration when  working through all other possible embeddings. This is precisely what has been carried out 
in the proofs of Proposition~\ref{Main_result_2} and Theorem~\ref{Main_result_3}.

\begin{remk} Combining the results \cite[Theorem 1]{seitz}, \cite[Main Theorem]{test}, Theorem~\ref{Main_result_1}, Proposition~\ref{Main_result_2}, and Theorem~\ref{Main_result_3}, we conclude that there are no new examples to be added to \cite[Table 1]{seitz} for pairs $(X,Y)$ different from $(B_n,D_{n+1})$, while Theorem~\ref{Main_result_1} covers the embedding $B_n\subset D_{n+1}$. This assertion is dependent upon our Hypothesis~\ref{inductive_hypothesis} below, where we state explicitly which results from \cite{seitz} are assumed  for the proofs of our main results.
\end{remk}

\subsection*{Statement of results}      

Let $Y=\mbox{Spin}_{2n+2}(K)$ $(n\geq 2)$ be a simply connected, simple algebraic group of type $D_{n+1}$  over $K.$ Also let $X$ be the subgroup of type $B_n,$ embedded in $Y$ in the usual way, as the stabilizer of a non-singular $1$-dimensional subspace of the natural module for $Y.$ Fix $T_Y$ a maximal torus of $Y$ and $B_Y\subset Y$ a Borel subgroup containing $T_Y$. Denote by $\{\lambda_1,\ldots,\lambda_{n+1}\}$ the corresponding set of fundamental weights for $T_Y,$ ordered as in \cite{Bourbaki}, where the natural $(2n+2)$-dimensional $KY$-module has highest weight $\lambda_1.$ 
Let $\sigma$ be a graph automorphism of $Y$ stabilizing $T_Y$, and with $X=Y^\sigma$, the group of $\sigma$-fixed points. Our first main result is the following; the proof is given  in Section \ref{Proof_of_Theorem_1}.

\begin{thm}\label{Main_result_1}
Let $Y=\Spin_{2n+2}(K)$ be a simply connected, simple algebraic group of type $D_{n+1}$ over $K$, $n\geq 2$, and let $X$ be the subgroup of type $B_n,$ as above. Consider a non-trivial, irreducible $KY$-module $V$ having $p$-restricted highest weight $\lambda=\sum_{r=1}^{n+1}{a_r\lambda_r}.$ Then $X$ acts irreducibly on $V$ if and only if $\lambda$ or $\sigma(\lambda)$ is equal to  $\sum_{r=1}^n{a_r\lambda_r},$ with $a_n\neq 0,$ and for all $1\leq i<j\leq n$ 
such that $a_ia_j\neq 0$ and $a_r=0$ for all  $i<r<j,$ we have $p\mid (a_i+a_j +j-i).$
\end{thm}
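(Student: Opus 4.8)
The plan is to turn the irreducibility of $V_Y(\lambda)$ over $X$ into a dimension count and then analyse that count through the Weyl--module branching rule together with a linkage computation in $Y$. Since a highest weight vector of $V_Y(\lambda)$ for $B_Y\cap X$ generates an $X$-submodule with simple head $V_X(\mu)$, where $\mu$ is the image of $\lambda$ under the restriction of weights from $Y$ to $X$ (the projection killing the coordinate $\varepsilon_{n+1}$ of the fixed non-singular line), $V_X(\mu)$ is always a composition factor of $V_Y(\lambda)|_X$; hence $X$ is irreducible on $V_Y(\lambda)$ if and only if $\dim V_Y(\lambda)=\dim V_X(\mu)$. The graph automorphism $\sigma$ interchanges $\lambda_n,\lambda_{n+1}$ and fixes the other fundamental weights, so $V_Y(\lambda)|_X\cong V_Y(\sigma\lambda)|_X$ while $\sigma\lambda$ and $\lambda$ share the image $\mu$; this lets me assume $a_{n+1}=0$, in which case $\mu=\sum_{r<n}a_r\omega_r+a_n\omega_n$ has $\varepsilon_k$-coordinate $\ell_k=a_k+\dots+a_{n-1}+\tfrac12 a_n$ for $k\le n$.

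Next I would eliminate the excluded configurations. If $a_n=a_{n+1}=0$, then $\mu$ is not a spin weight and the summand $K$ in $V_Y(\lambda_1)|_X=V_X(\omega_1)\oplus K$ propagates, through the good filtrations of the relevant exterior-power Weyl modules, to a dominant $X$-weight occurring in $V_Y(\lambda)|_X$ with strictly higher multiplicity than in $V_X(\mu)$, so the restriction is reducible. If instead $a_na_{n+1}\ne0$, then in $\varepsilon$-coordinates $\ell_n>|\ell_{n+1}|$, so the last interlacing coordinate already ranges over at least two values and the branching below produces extra factors that cannot all be absorbed, again forcing reducibility. This isolates the principal case $a_{n+1}=0$, $a_n\ne0$, which is the content of both directions of the stated condition.

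For the principal case the engine is the classical $D_{n+1}\!\downarrow\! B_n$ branching rule: since Weyl modules have good filtrations in all characteristics, $\Delta_Y(\lambda)|_X$ has a $\Delta$-filtration with factors $\Delta_X(\mu')$ over the interlacing tuples $\mu'=(m_1,\dots,m_{n-1},\tfrac12 a_n)$ with $\ell_{k+1}\le m_k\le\ell_k$; there are $\prod_{k<n}(a_k+1)$ of them, the top factor being $\mu$. In characteristic $0$ this gives $V_Y(\lambda)|_X=\bigoplus_{\mu'}V_X(\mu')$, reducible as soon as some $a_k$ $(k<n)$ is nonzero. Thus irreducibility in characteristic $p$ is precisely the assertion that, on passing to simple quotients, every factor $\Delta_X(\mu')$ with $\mu'\ne\mu$ is cancelled against the drop from $\Delta_Y(\lambda)$ to $V_Y(\lambda)$ together with the drop from $\Delta_X(\mu)$ to $V_X(\mu)$.

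The crux, and the step I expect to be hardest, is to show that this cancellation occurs exactly when $p\mid(a_i+a_j+j-i)$ for every pair of consecutive active nodes. Here the nodes $i,i+1,\dots,j$ span a type $A_{j-i+1}$ subsystem of $Y$ whose highest root is $\beta_{ij}=\varepsilon_i-\varepsilon_{j+1}$, and a direct computation gives $\langle\lambda+\rho,\beta_{ij}^\vee\rangle=a_i+a_j+(j-i)+1$; hence $p\mid(a_i+a_j+j-i)$ is exactly the condition $p\mid(\langle\lambda+\rho,\beta_{ij}^\vee\rangle-1)$ that the first affine reflection below $\lambda$ in the $\beta_{ij}$-direction links $\lambda$ to the dominant weight $\lambda-\beta_{ij}$, i.e.\ that $\lambda$ is maximally singular across each gap of its support. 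I would prove these linkage conditions necessary and sufficient for the cancellation by induction on the number of active nodes, restricting to $\sigma$-stable Levi subgroups of $Y$ (products of type $A$ factors with a smaller $B_m\subset D_{m+1}$ pair) so that each interval condition is detected in a subconfiguration of smaller rank: for necessity, failure of $p\mid(a_i+a_j+j-i)$ leaves $\lambda-\beta_{ij}$ unlinked, and its image $\bar\beta_{ij}$ under restriction produces a dominant $B_n$-weight $\mu-\bar\beta_{ij}$ giving rise to a further composition factor of $V_Y(\lambda)|_X$; for sufficiency, the interval relations generate enough of the radical of $\Delta_Y(\lambda)$ to surject onto every extra $\Delta_X(\mu')$. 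The genuine obstacle throughout is that neither $V_Y(\lambda)$ nor $V_X(\mu)$ coincides with its Weyl module, so the branching rule alone does not suffice; one must keep simultaneous track of which walls $\lambda$ and $\mu$ lie on and of how the $D$-type and $B$-type linkages interlock across each gap --- precisely the bookkeeping that went wrong in \cite[8.7]{seitz}.
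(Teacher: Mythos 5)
Your setup is sound and even parallels the paper in places: the observation that $L_X(\mu)$ is always a composition factor of $V|_X$, so that irreducibility is a dimension identity, is essentially the paper's Lemma~\ref{V|X_irreducible_iff_V=Lie(X)v+} (which uses the self-duality of $V|_X$, Lemma~\ref{Irreducible_KY-modules_are_self-dual_for_X}); your identification $p\mid(a_i+a_j+j-i)\iff\langle\lambda+\rho,\beta_{ij}^\vee\rangle\equiv 1\pmod p$ is correct and matches the $A_\ell$ computation of Proposition~\ref{8.6_Seitz}; and your necessity strategy (Levi restriction via Smith's theorem to smaller $B_m\subset D_{m+1}$ pairs, plus the two-label $A$-type case) is workable and close in spirit to Proposition~\ref{A_first_reduction}\ref{3rd_cond_reduction_lemma}. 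But there is a genuine gap at exactly the step you flag as the crux, and it is the sufficiency direction. Your mechanism --- cancel every non-top Weyl factor $\Delta_X(\mu')$ of the branching filtration against the radicals of $\Delta_Y(\lambda)$ and $\Delta_X(\mu)$, certified by the assertion that ``the interval relations generate enough of the radical of $\Delta_Y(\lambda)$ to surject onto every extra $\Delta_X(\mu')$'' --- is not an argument but a restatement of the theorem in the Grothendieck group. To verify the resulting dimension identity you would need the actual decomposition numbers $[\Delta_Y(\lambda):L_Y(\cdot)]$ and $[\Delta_X(\mu'):L_X(\cdot)]$ for highest weights with arbitrarily many nonzero labels; these are unknown in general, and the linkage principle supplies only \emph{necessary} conditions for composition factors, never the multiplicities your cancellation requires. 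Nor can your proposed induction through $\sigma$-stable Levi subgroups close this: Smith's theorem controls only the single quotient $V/[V,Q_Y]$, so irreducibility of Levi restrictions is a necessary condition and cannot certify irreducibility of $V|_X$ --- using the smaller $B_m\subset D_{m+1}$ pairs to prove sufficiency would in any case be circular.

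This is precisely why the paper abandons character-level bookkeeping for sufficiency and argues by direct generation: it reduces irreducibility of $V|_X$ to $V=\Lie(X)v^+$ (Lemma~\ref{V|X_irreducible_iff_V=Lie(X)v+}), then to the finitely many memberships $f_{r,n}v^+\in V_{r,n}$ (Proposition~\ref{f_gamma_1...f_gamma_rv^+_in_Lie(X)v^+}, Lemma~\ref{Lemma_on_V_r,n_VS_Vr,n+1}, Theorem~\ref{Key_Theorem}), and the equivalence of those memberships with your congruence conditions is a substantive computational theorem imported from \cite[Theorem A.7]{Cavallin3} (Theorem~\ref{How_to_relate_divisibility_conditions_and_generators_for_V_i,j}); nothing in your outline replaces that input. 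Two secondary soft spots: the claim that $\Delta_Y(\lambda)|_X$ has a $\Delta$-filtration with classical interlacing multiplicities ``in all characteristics'' needs a citation and care at $p=2$ (where, e.g., $L_Y(\lambda_1)|_X$ is not semisimple, so your ``summand $K$ propagates'' argument for the case $a_n=a_{n+1}=0$ as stated already fails there --- the paper instead exhibits two $T_Y$-weights restricting to a single $T_X$-weight of too-small multiplicity), and the $a_na_{n+1}\neq 0$ exclusion via ``interlacing coordinates'' is hand-waved where a one-line multiplicity comparison at $\omega-\beta_n$ suffices. These are repairable; the missing sufficiency mechanism is not, without essentially re-proving the generation criterion.
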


The set of weights which is listed in \cite[Table 1]{seitz} for the pair $(B_n,D_{n+1})$ is  
$$
\{c\lambda_n, a\lambda_k+b\lambda_n\ :\ abc\ne 0,\  p\mid (a+b+n-k)\}.
$$ 
So we see that the new examples are a generalization of those found by Seitz, 
where one congruence condition is replaced by a set of congruence conditions. (Note that there are new examples only if $n\geq 3$.) It is 
perhaps informative to point out precisely what error occurs in the proof of \cite[8.7]{seitz}, where the embedding $B_{n}\subset D_{n+1}$ is considered. In the proof, Seitz defines a certain vector in the irreducible $KY$-module of highest weight $\lambda$ and 
shows that this vector  is annihilated by all simple root vectors in the Lie algebra of $X$, which then implies that $X$ does not act 
irreducibly.  However if $\lambda$ satisfies the congruence conditions, the vector is in fact the zero vector in $V$ and so does not give rise to a second composition factor as claimed. 
It is also natural to ask how one might discover the given set of congruence conditions, and here we must give credit to the work of Ford in \cite{Ford1}, where he studied irreducible triples of the form $(H,G,V)$, $G$ a simple classical type algebraic group over $K$, $H$ a disconnected closed subgroup of $G$ with $H^\circ$ simple, and $V$ an irreducible $KG$-module on which $H$ acts irreducibly. He discovered a family of irreducible triples for the embedding $D_n.2\subset B_n$, where the highest weight of the irreducible $KB_n$-module satisfies similar congruence conditions. His methods were later applied by Cavallin in \cite{Cavallin_thesis} when studying irreducible $KB_n$-modules having precisely two $D_n$-composition factors.

The second goal of this article is to show that the existence of the new examples for the pair $(X,Y) = (B_n,D_{n+1})$, described by Theorem \ref{Main_result_1}, has no \emph{further} influence on the main theorems in \cite{seitz,test}. To explain the issue which must be addressed and our approach to the problem, we must describe to some extent the strategy of the proof of \cite[Theorem 1]{seitz}. 
First note that the assumption that $X$ acts irreducibly on some $KY$-module implies that $X$ is semisimple. One of the main  techniques used  to determine the triples $(X,Y,V)$ as above involves arguing inductively, working with a suitable embedding $P_X=Q_XL_X\subset P_Y=Q_YL_Y$ of parabolic subgroups, where $Q_X = R_u(P_X)\subset R_u(P_Y) = Q_Y$. Indeed, \cite[2.1]{seitz} implies that if $X$ acts irreducibly on $V,$ then the derived subgroup $L_X'$ acts irreducibly on the commutator quotient $V/[V,Q_Y]$, an irreducible $KL_Y'$-module.  Moreover, the highest weight of $V/[V,Q_Y]$ as $KL_Y'$-module is the restriction of the highest weight of $V$ to an appropriate maximal torus of $L_Y'$. (This is a variation of a result of \cite{smith}.) Thus, Seitz and Testerman proceed by induction on the rank of $X$; Seitz treats the case $X$ of type $A_1$ by ad hoc methods, exploiting the fact that all weights of an irreducible $KA_1$-module are of multiplicity one. Now Theorem \ref{Main_result_1} above introduces a new family of examples of irreducible triples. As a consequence, one needs to reinvestigate all embeddings  $X\subset Y$ where the pair $B_m\subset D_{m+1}$, $m\geq 3$, may arise when considering the projection of a Levi factor $L_X'$ of $X$ into a simple component of a Levi factor $L_Y'$ of $Y$, under the additional hypothesis that $X$ acts irreducibly on a $KY$-module whose highest weight has restriction to the 
$D_{m+1}$-component of $L_Y'$ among the new examples described by Theorem \ref{Main_result_1}. This is precisely what we consider in Proposition \ref{Main_result_2} below. In order to state the  result, we introduce the following terminology.

\begin{defn}\label{Definition:Congruence_conditions}
We will say a $p$-restricted dominant weight $\lambda$ for $Y=D_{m+1}$, $m\geq 3$, \emph{satisfies the congruence conditions} if 
\begin{enumerate}
\item $\lambda=\sum_{i=1}^{m}a_i\lambda_i$, 
\item $ a_m\ne 0$, 
\item there exists $i< j\leq m-1$ with $a_ia_j\ne 0$, and 
\item for all $1\leq i<j\leq m$ such that $a_ia_j\ne 0$ and $a_r=0$ for all $i<r<j$, we have $p \mid (a_i+a_j+j-i)$.
\end{enumerate}
\end{defn}

Note that the above congruence conditions are precisely those satisfied by the highest weights in Theorem~\ref{Main_result_1} but \emph{not} appearing in \cite[Table 1]{seitz}. (See the remark following the statement of Theorem~\ref{Main_result_1}.)

For the proofs of Proposition~\ref{Main_result_2} and Theorem~\ref{Main_result_3}, we require the following inductive hypotheses.

\begin{hyp}\label{inductive_hypothesis} 
Assume $\characteristic (K) = p>0$. Let $G$ be a simple algebraic group defined over $K$ and $H$ a semisimple, proper, closed subgroup of $G$, where the pair $(H,G)$ is one of the following:\
\begin{enumerate}[(i)]
\item $(H,B_n)$, $n\geq 3$,
\item $(B_\ell,A_n)$, $\ell\geq 2$,
\item $(C_3,D_n)$, $n\geq 4$,
\item $(C_3,A_5)$.
\end{enumerate}

 \noindent Let $V$ be a $p$-restricted irreducible $KG$-module, with corresponding representation $\rho:G\to \mbox{GL}(V).$ Then $H$ acts irreducibly on $V$ if and only if the triple $(\rho(H),\rho(G),V)$ appears in 
$\cite[{\rm Table\ 1}]{seitz}$, where the highest weight of $V$ is given up to graph automorphisms of $G$.
\end{hyp}

\subsection*{The classical case}

Let $Y$ be of classical type. The next result ensures that, under the assumption of Hypothesis~\ref{inductive_hypothesis} for all embeddings $H\subset G$ with $\rank(G)<\rank(Y),$ the only new examples of irreducible triples $(X,Y,V)$ are those listed in Theorem~\ref{Main_result_1}.

\begin{prop}\label{Main_result_2} Let $Y$ be a simply connected, simple algebraic group of type $D_{n+1}$, 
$n\geq 4$, $X$ a semisimple, proper, closed subgroup of $Y$ acting irreducibly on a $p$-restricted irreducible $KY$-module $V$ of highest weight $\lambda$, $V$ not the natural module for $Y$. Assume Hypothesis~$\ref{inductive_hypothesis}$ for all embeddings $H\subset G$ with $\rank(G)<\rank(Y).$ Moreover, if $X$ is simple, assume the following conditions are satisfied.  \begin{enumerate}[]
\item{\rm (i)} $P_X$ is a maximal proper parabolic subgroup of $X$, with Levi factor $L_X$ of type $B_{m-1}$, $m\geq 4$. 
\item{\rm (ii)} $P_Y$ is a parabolic subgroup of $Y$ with $P_X\subset P_Y$ and $R_u(P_X)\subset R_u(P_Y)$. 
\item{\rm (iii)} For a Levi factor 
$L_Y$ of $P_Y$, writing $L_Y'=L_1L_2\cdots L_t$, a commuting product of simple groups, $L_t$ is of type 
$D_{m},$ and $L_X'$ projects non-trivially into $L_t.$ 
\item{\rm (iv)} For the irreducible $KL_Y'$-module 
$V/[V,Q_Y]$, write $V/[V,Q_Y]= M_1\otimes \cdots\otimes M_t$, where $M_i$ is an irreducible
 $KL_i$-module. 
\item{\rm (v)} The highest weight of the $KL_t$-module $M_t$ satisfies the congruence 
conditions. 
\end{enumerate}
Then one of the following holds.
\begin{enumerate}[]
\item{\rm (a)} $X\subset B_k\times B_{n-k}$ for some $0<k<n,$ and $\lambda=\lambda_n$ or $\lambda_{n+1}$, or
\item{\rm (b)} $X=B_{n}$ and the embedding of $X$ in $Y$ is the usual embedding of $B_n$ in $D_{n+1}$, that is, $B_n$ is the stablizer of a nonsingular 1-space on the natural $(2n+2)$-dimensional $KY$-module. 
\end{enumerate}
Moreover, in case {\rm (a)} above, the subgroup $B_k\times B_{n-k}$ acts irreducibly on the $KY$-modules of highest weights $\lambda_n$ and $\lambda_{n+1}$.
\end{prop}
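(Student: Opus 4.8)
The plan is to argue by cases according to whether $X$ is simple, in each case exploiting the parabolic restriction principle of \cite[2.1]{seitz}: if $X$ acts irreducibly on $V$, then $L_X'$ acts irreducibly on the irreducible $KL_Y'$-module $V/[V,Q_Y]$, whose highest weight is the restriction of $\lambda$ to a maximal torus of $L_Y'$. I expect the non-simple case to produce conclusion (a) and the simple case (with the Levi configuration (i)--(v)) to produce conclusion (b), with the final ``moreover'' clause being a direct branching computation. Throughout, Theorem~\ref{Main_result_1} will be the key new input for the component $L_t=D_m$, while Hypothesis~\ref{inductive_hypothesis} governs the remaining components.

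First suppose $X$ is not simple, so $X=X_1\cdots X_s$ is a commuting product with $s\geq 2$. Since $X$ acts irreducibly on $V$ over the algebraically closed field $K$, the restriction of $V$ is an outer tensor product of nontrivial irreducible modules for the factors, and $X$ lies in no proper parabolic of $Y$. Here I would invoke the analysis of \cite{seitz} for reducible subgroups, which is unaffected by the new examples at this reduction, to conclude that the only $D_{n+1}$-modules carrying such a tensor structure are the half-spin modules; this forces $\lambda\in\{\lambda_n,\lambda_{n+1}\}$ and $X\subseteq B_k\times B_{n-k}$ for some $0<k<n$, giving (a).

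Next suppose $X$ is simple and satisfies (i)--(v). By (i), the only simple type admitting a maximal parabolic with Levi derived group $B_{m-1}$ is $B_m$ (obtained by deleting the end node adjacent to $L_X'$), so $X=B_m$ with $m\geq 4$. The first step is to identify the projection $\pi_t(L_X')$ into $L_t=D_m$: since the highest weight of $M_t$ satisfies the congruence conditions of Definition~\ref{Definition:Congruence_conditions} and $\pi_t(L_X')$ acts irreducibly on $M_t$, I would apply Theorem~\ref{Main_result_1} to the smaller-rank pair $B_{m-1}\subset D_m$ (dimension and orthogonality constraints rule out a nonstandard $B_{m-1}$ in $D_m$) to conclude that $\pi_t(L_X')$ is, up to the graph automorphism, the usual embedding of $B_{m-1}$ in $D_m$. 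The second step is to control the remaining factors $L_1,\dots,L_{t-1}$ (of type $A$, arising from deleting the first $n-m$ nodes of $Y$): using Hypothesis~\ref{inductive_hypothesis}(ii) I would show $L_X'$ projects trivially there, so that as an $L_X'$-module $V/[V,Q_Y]$ is governed by $M_t$ alone and the coefficients of $\lambda$ on the deleted nodes are correspondingly pinned down. The final step is a reconstruction argument: knowing the embedding of $L_X'=B_{m-1}$ inside $L_Y'$ together with the restriction of $\lambda$, I would lift to the whole group by examining the successive levels of the $Q_Y$-filtration of $V$, via the standard inductive machinery of \cite{seitz}, and show that a strict inequality $m<n$ produces a second $X$-composition factor, a contradiction. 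This forces $m=n$ and identifies $X\subset Y$ as the usual $B_n\subset D_{n+1}$, which is (b). For the ``moreover'' clause, recall that $B_k\times B_{n-k}$ sits in $D_{n+1}$ via $\SO_{2k+1}\times\SO_{2(n-k)+1}\subset\SO_{2n+2}$; each half-spin module of highest weight $\lambda_n$ or $\lambda_{n+1}$, of dimension $2^n$, restricts to the outer tensor product of the spin modules of the two factors, of dimensions $2^k$ and $2^{n-k}$. As each factor acts irreducibly on its spin module and $2^k\cdot 2^{n-k}=2^n$, the restriction is irreducible.

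The main obstacle will be the reconstruction step in the simple case: passing from the now-understood embedding of the Levi $B_{m-1}\subset D_m$ to the embedding of the full group $X\subset Y$, and eliminating every proper configuration with $m<n$. This is where the congruence conditions must be used quantitatively, since they simultaneously guarantee irreducibility on $M_t$ and must be shown to be incompatible with the extra weight-combinatorial structure appearing on the higher filtration levels whenever $X$ is not the full $B_n$. Establishing this incompatibility is the technical heart of the argument.
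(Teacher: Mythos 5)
Your proposal diverges from the paper's proof in a way that exposes a genuine structural gap. The paper's case division is not ``$X$ simple versus $X$ non-simple'' but ``$X$ reducible versus irreducible on the \emph{natural} module $W$'' (Proposition~\ref{reduction}), and conditions (i)--(v) are used only in the irreducible branch, where they lead to a \emph{contradiction} (Propositions~\ref{Bm-case} and \ref{F4-case}): no simple $X$ satisfying (i)--(v) acts irreducibly on $W$. Conclusion (b) does not come out of the Levi-configuration analysis at all; it comes from the reducible branch, where $W|_X$ reducible forces $X\subset B_k\times B_{n-k}$ with $0\leq k<n$ (the case $k=0$ giving $X\subseteq B_n$), and then $X\subsetneq B_n$ is excluded by a Table~1 lookup via Hypothesis~\ref{inductive_hypothesis}(i) for the pair $(X,B_n)$, using that the congruence conditions force at least three nonzero labels on $\lambda|_{T_{B_n}}$. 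Your Step 3 (``reconstruction'': force $m=n$ via the $Q_Y$-filtration of $V$ and conclude the usual embedding) cannot succeed as planned, because in the usual embedding $B_n\subset D_{n+1}$ the group $X$ acts \emph{reducibly} on $W$ -- so the branch you are working in, where the embedding is probed through an irreducible Levi configuration on $W$, can never terminate at (b). The paper's actual contradiction in the $B_m$ case has nothing to do with comparing $m$ and $n$: it is a comparison of two flags in $W$ -- the level flag $\{[W,Q_X^i]\}$ of Lemma~\ref{levels} and the flag $\{W_j\}$ stabilized by $P_Y$ -- organized around the weight $\mu$ of level $e/2$, the parity statement of Lemma~\ref{lem_mu} (an odd number of multiplicity-one weights restricting to $\mu|_{T_X\cap L_X'}$), self-duality of flag quotients, and the fact that $(W_t/W_{t-1})|_{L_X'}$ carries only the dominant weights $0$, $\omega_1$, or (when $m=4$) the spin weight; even the evenness of $e$ requires Hypothesis~\ref{inductive_hypothesis}(ii) for $(B_\ell,A_n)$ applied to $V/[V,Q]$ for the $A_n$-parabolic. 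This flag comparison is the technical heart, not the $m<n$ elimination you anticipate.

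Two further gaps. First, your claim that $B_m$ is the only simple type admitting a maximal parabolic with Levi derived group of type $B_{m-1}$ is false: deleting the end node $\beta_4$ of $F_4$ leaves a Levi of type $B_3$, so hypothesis (i) with $m=4$ also admits $X=F_4$, and the paper must (and does) treat this separately in Proposition~\ref{F4-case}, via a level computation for the $\beta_1$-parabolic with Levi $C_3$ and an appeal to Hypothesis~\ref{inductive_hypothesis}(iii) for $(C_3,D_\ell)$, with extra care at $p=2$. Second, your non-simple case asserts without proof that only the half-spin modules carry the required tensor structure; the genuine content here is threefold: Proposition~\ref{reduction} (minimal invariant subspace, with $D_s\times D_{n+1-s}$ killed by \cite[Theorem 4.1]{seitz} and a $p=2$ argument), Proposition~\ref{tensorind} -- note that a non-simple $X$ could a priori act irreducibly but tensor-decomposably on $W$ inside $\Isom(D)'\circ\Isom(F)'$, which is \emph{not} contained in any $B_k\times B_{n-k}$, and ruling this out requires a flag construction, Smith's theorem, the congruence conditions on $\lambda$, and a separate homogeneity argument when $\dim F=2$ -- and Proposition~\ref{semisimple} (induction on $n$ with an explicit $D_3=A_3$ base case using \cite[6.1]{seitz}, plus a separate $p=2$ argument) to pin $\lambda\in\{\lambda_n,\lambda_{n+1}\}$. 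Declaring this part ``unaffected by the new examples'' begs the question the paper is answering: the inductive hypothesis has changed, so these reductions must be re-verified against the enlarged list, which is exactly why the paper re-proves them under Hypothesis~\ref{inductive_hypothesis}. Your dimension count for the ``moreover'' clause does match the paper's.
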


\begin{remk} Now to go on to determine the irreducible triples satisfying (a), we rely on Hypothesis~\ref{inductive_hypothesis}(i), and for those satisfying (b), we apply Theorem~\ref{Main_result_1}.

\end{remk}

\subsection*{The exceptional case} We now turn to the consideration of the case where $Y$ is a simply connected, simple algebraic group of exceptional type over $K$ and $X$ is a proper closed, 
connected subgroup of $Y$ acting irreducibly on some $p$-restricted irreducible $KY$-module. As usual, $X$ is then semisimple, and once again, we must consider the possibility of a  parabolic embedding $P_X\subset P_Y,$ with Levi factor $L_X$ of $P_X,$ of type $B_m$, Levi factor $L_Y$ of $P_Y,$ having a simple factor of type $D_{m+1}$, with the action on the commutator quotient arising from a weight which satisfies the congruence conditions. In particular, $Y$ is of type $E_n$, for $n=6,7$ or $8$.

\begin{thm}\label{Main_result_3}
Let $Y$ be a simply connected simple algebraic group of type $E_n$, $6\leq n\leq 8$, defined over $K$ and let $X$ be a semisimple, proper, closed, connected subgroup of $Y,$ having   a proper parabolic subgroup with Levi factor of type $B_m$, for some $m\geq 3$. Assume Hypothesis~$\ref{inductive_hypothesis}$ for all embeddings $H\subset G$ with $\rank(G)<\rank(Y).$ Let $V$ be a non-trivial irreducible $KY$-module with $p$-restricted highest weight $\lambda$.
 Then $X$ acts irreducibly on $V$ if and only 
if $Y=E_6,$ $X=F_4,$ and one of the following holds.
\begin{enumerate}[]
\item{\rm (i)} $\lambda=(p-3)\lambda_1$ or $\lambda=(p-3)\lambda_6$, with $p>3.$
\item{\rm (ii)} $\lambda=\lambda_1+(p-2)\lambda_3$ or $\lambda=(p-2)\lambda_5+\lambda_6$,  with $p>2.$ 
\end{enumerate}
\end{thm}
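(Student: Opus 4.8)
The plan is to reduce the problem to a finite, highly constrained search by exploiting the parabolic embedding machinery already set up in Proposition~\ref{Main_result_2} and the inductive hypothesis. Suppose $X$ is a semisimple, proper, closed, connected subgroup of $Y=E_n$ possessing a proper parabolic subgroup $P_X$ with Levi factor $L_X$ of type $B_m$, $m\geq 3$, and acting irreducibly on the $p$-restricted irreducible module $V$ of highest weight $\l$. First I would choose $P_X$ so that $L_X$ has a $B_m$-factor and, via \cite[2.1]{seitz}, pass to a parabolic $P_Y=Q_YL_Y$ of $Y$ with $P_X\subset P_Y$ and $R_u(P_X)\subset R_u(P_Y)$; then $L_X'$ acts irreducibly on $V/[V,Q_Y]$, and the highest weight of this $KL_Y'$-module is the restriction of $\l$. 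The hypothesis in the statement is precisely that the projection of $L_X'$ into some simple factor $D_{m+1}$ of $L_Y'$ gives a $B_m\subset D_{m+1}$ action satisfying the congruence conditions of Definition~\ref{Definition:Congruence_conditions}; by Theorem~\ref{Main_result_1} this forces the restricted weight on that factor to lie in the new family. This is what drives the whole analysis: the new examples can only persist if the ambient geometry of $E_n$ accommodates a $D_{m+1}$ Levi factor with the right weight.

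Next I would enumerate, for each $Y=E_6,E_7,E_8$, the parabolic subgroups $P_Y$ whose Levi factor $L_Y'$ contains a simple component of type $D_{m+1}$ with $m\geq 3$, using the standard tables of Levi subgroups of the exceptional groups (obtained by deleting nodes from the Dynkin diagram). For each such $P_Y$ I would write $V/[V,Q_Y]=M_1\otimes\cdots\otimes M_t$ as in Proposition~\ref{Main_result_2}(iv), identify the $D_{m+1}$-component $L_t$, and impose that the highest weight of $M_t$ satisfies the congruence conditions. The tensor decomposition, combined with the requirement that $L_X'$ embeds diagonally and acts irreducibly on each $M_i$, drastically limits the possible weights: the remaining factors $M_i$ must themselves be modules on which the relevant projection of $L_X'$ acts irreducibly, and here I would invoke Hypothesis~\ref{inductive_hypothesis} (valid since $\rank(L_Y')<\rank(Y)$) to control those factors against \cite[Table~1]{seitz}. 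The combinatorics of which $\l=\sum a_i\l_i$ restrict correctly onto the $D_{m+1}$-factor while simultaneously giving admissible weights on the other factors is where most of the casework lives.

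The surviving candidates must then be checked for \emph{global} irreducibility of $X$ on $V$, not merely Levi-level irreducibility. I expect the outcome to collapse to the single embedding $F_4\subset E_6$, since $F_4$ is the only exceptional-type group of the form $Y^\sigma$ whose natural parabolic structure produces a $B_3$ Levi factor projecting into a $D_4$ Levi factor of $E_6$ in the required way; the $B_3\subset D_4$ situation from Theorem~\ref{Main_result_1} with the congruence $p\mid(a_1+a_3+2)$ (and its graph-twist) is exactly what yields the weights $(p-3)\l_1$, $(p-3)\l_6$, $\l_1+(p-2)\l_3$, and $(p-2)\l_5+\l_6$ listed in (i)--(ii). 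To finish I would verify these four cases genuinely give irreducible triples by computing the relevant dimensions of the $E_6$-module $V$ and of the irreducible $F_4$-module of the corresponding restricted highest weight, consulting \cite{luebeck}, and confirming equality (which forces irreducibility). Conversely, for all other $P_Y$ and all other $Y$, I would show the congruence conditions either cannot be met on the $D_{m+1}$-factor or force one of the remaining tensor factors to violate Hypothesis~\ref{inductive_hypothesis}, ruling them out.

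The main obstacle is the sheer branching of the parabolic-and-tensor-factor analysis for $E_7$ and $E_8$, where several distinct Levi subgroups contain a $D_{m+1}$ component and the node-deletion bookkeeping must be done uniformly; the delicate point is ensuring that whenever the $D_{m+1}$-factor carries a congruence-satisfying weight, the forced weights on the complementary factors are incompatible with irreducibility of the diagonal image of $L_X'$. I expect this incompatibility to follow from dimension and labelling constraints together with the inductive hypothesis, but verifying it exhaustively—rather than the final $F_4\subset E_6$ dimension check, which is routine via \cite{luebeck}—is where the real work resides.
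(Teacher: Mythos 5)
Your proposal has two genuine gaps, and they are precisely where the paper's real work lies. First, the Levi-level sieve you describe does \emph{not} collapse the candidate weights for $F_4\subset E_6$ to the four listed in the theorem. Running your own machinery (parabolic embeddings, Theorem~\ref{Main_result_1} applied to the Levi embeddings $B_3\subset D_4$ and $C_3\subset A_5$, plus Hypothesis~\ref{inductive_hypothesis}) is exactly what the paper does in Lemma~\ref{thm2_red}, and it leaves a third surviving family: $\lambda=a\lambda_2+a\lambda_3+b\lambda_4$ with $a+b=p-1$. This weight satisfies every congruence condition on every relevant Levi factor, so no amount of node-deletion bookkeeping or appeal to \cite[Table 1]{seitz} can eliminate it; it is a genuinely new configuration created by the new family of Theorem~\ref{Main_result_1}. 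The paper rules it out by non-Levi information: a comparison of $\m_{V|_X}(\omega')$ with $\m_{L_X(\omega)}(\omega')$ for the specific weight $\omega'=\omega-\beta_1-\beta_2-2\beta_3-\beta_4$, which in turn requires the Jantzen $p$-sum formula analysis of Weyl module structure for groups of types $B_2$, $B_3$, $A_3$, $A_4$ (Proposition~\ref{Weight_multiplicities} and the lemmas preceding it). Nothing in your outline produces this computation, and your fallback --- a dimension check against \cite{luebeck} --- cannot substitute for it: the weights here are unbounded $p$-dependent families lying outside the range of the small-degree tables (the paper quotes \cite[5.6, 5.7]{test} for the existence direction, not a dimension count).

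Second, your argument never actually pins down the subgroup $X$ or its conjugacy class in $Y$: the parabolic-and-tensor analysis constrains $\lambda$, not the embedding, and the hypothesis allows any semisimple $X$ with a $B_m$-Levi (e.g.\ $X$ with a $B_\ell$ or $F_4$ factor, embedded in $Y$ in a priori unknown ways). Your sentence asserting that ``$F_4$ is the only exceptional-type group of the form $Y^\sigma$\dots'' is precisely the statement requiring proof. The paper closes this gap by a route absent from your proposal: it invokes the Liebeck--Seitz classification \cite{liebeck-seitz} of maximal positive-dimensional subgroups of exceptional groups (unavailable to \cite{test}), reducing to the three candidate maximal pairs $(F_4,E_6)$, $(A_1F_4,E_7)$, $(G_2F_4,E_8)$; the latter two are eliminated in Proposition~\ref{other_embeddings} by a homogeneity argument exhibiting a second $F_4$-composition factor, and non-maximal $X$ are then handled by embedding $X$ in the $F_4$ subgroup of $E_6$ and citing \cite[Main Theorem]{test} for the pair $(X,F_4)$. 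Without either this classification or some replacement for it, your enumeration of Levi factors of $E_6$, $E_7$, $E_8$ cannot conclude anything about which subgroups $X$ exist, however exhaustively the weight combinatorics is carried out.
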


Note that the existence of the examples arising in Theorem  \ref{Main_result_3} had already been established by Testerman \cite[Main Theorem]{test}. The proof of the only if direction requires us to treat, eventually ruling out, several new potential configurations that arise from Theorem \ref{Main_result_1} in the inductive process, as explained in Section \ref{exceptional}. 

\subsection*{About the proofs}

We conclude this section with a brief discussion of the methods and further remarks on our inductive assumption (Hypothesis~\ref{inductive_hypothesis}). In order to prove Theorem \ref{Main_result_1}, we first show that it is enough to work with the Lie algebras of $Y$ and $X.$ Indeed, as $\lambda$ is $p$-restricted, the irreducible $KY$-module $V$ is generated by a maximal vector $v^+$ for $B_Y$ as a module for the universal enveloping algebra $\mathfrak{U}_Y$ of $\Lie(Y).$ Therefore in order to show that $V|_X$ is irreducible, it suffices to show that $\mathfrak{U}_Yv^+ = \mathfrak{U}_Xv^+$, where $\mathfrak{U}_X$ is the universal enveloping algebra of $\Lie(X)$. We rely on the fact that any irreducible module for $Y$ is self-dual as a $KX$-module (see~\ref{Irreducible_KY-modules_are_self-dual_for_X} below), and apply the techniques developed by Ford in \cite{Ford1}, further investigated by Cavallin in  \cite{Cavallin_thesis}, to establish this generation result.

For the proof of Proposition~\ref{Main_result_2}, we carry out an analysis used by Seitz in \cite[Section 8]{seitz}, but applied specifically to the group $Y=D_{m+1}$. He first shows that a proper closed connected subgroup $X$ acts  irreducibly on a non-trivial irreducible $KY$-module only if either $X$ acts irreducibly and tensor indecomposably on the natural module for $Y$, or the triple $(X,Y,V)$ is known. This part of our proof is not at all original, but we include it for completeness. At this point, however, our proof proceeds along different lines; we compare the commutator series for two different parabolic embeddings and obtain conditions on the highest weight which are compatible with the given congruence conditions only if the pair $(X,Y)$ is $(B_m,D_{m+1})$, which is  handled by Theorem~\ref{Main_result_1}.

For the proof of Theorem~\ref{Main_result_3}, we proceed differently than in \cite{test}; we use the classification of the maximal closed positive-dimensional subgroups of the exceptional type algebraic groups, given in \cite{liebeck-seitz},
which was not available when \cite{test} was written. Hence, we first consider the case where $X$ is maximal, find only the two examples of the theorem and conclude using the main result of \cite{test} for the group $Y=F_4$.

In addition to Hypothesis~\ref{inductive_hypothesis}, we rely upon two further results in \cite{seitz}, namely \cite[Theorem 4.1]{seitz} and \cite[6.1]{seitz}. The first result classifies the irreducible triples $(X,Y,V)$ when ${\rm rank}(X)={\rm rank}(Y)$, the second covers the case where ${\rm rank}(X)=1$. The proofs of these results are completely independent of the results in \cite[Section 8]{seitz}. Finally, we will use the results of \cite[Section 2]{seitz} concerning parabolic embeddings and commutator series in irreducible modules for semisimple groups.

\section{Preliminaries}     

In this section, we introduce the notation that shall be used in the remainder of the paper, and recall some basic properties of rational modules for simple linear algebraic groups. We rely on the standard reference  \cite{Jantzen} for a treatment of this general theory.

\subsection{Notation}     

Let $K$ be an algebraically closed field of characteristic $p \geq 0,$ and let $G$ be a simply connected, simple 
linear algebraic group over $K.$ (All algebraic groups considered here will be linear algebraic groups, even if we
 omit to say so explicitly.) Also fix a Borel subgroup $B=UT$ of $G,$ where $T$ is a maximal torus of $G$ and $U$ 
denotes the unipotent radical of $B.$ Let $\rank(G)=\ell$ and let $\Pi =\{\alpha_1,\ldots, \alpha_\ell\}$ be 
the corresponding 
base of the root system $\Phi=\Phi^+\sqcup \Phi^-$ of $G,$ where $\Phi^+$ and $\Phi^-$ denote the sets of positive and 
negative roots, respectively. Throughout we use the ordering of simple roots as in \cite{Bourbaki}. Let 
$\mathscr{W}$ be 
the 
\emph{Weyl group} of $G,$ and for $\alpha \in \Phi,$ denote by $s_{\alpha}$ the corresponding reflection. 
In addition, let
$$
X(T)=\Hom(T,K^*)
$$
denote the character group of $T$ and write $(-,-)$ for the usual $\mathscr{W}$-invariant  inner product on the space 
$X(T)_{\R}=X(T)\otimes \R.$ Also let $\lambda_1,\ldots,\lambda_\ell$ be the fundamental dominant weights for $T$ corresponding to our choice of base $\Pi,$ that is, $\langle \lambda_i,\alpha_j\rangle =\delta_{ij}$ for $1\leq i\leq j\leq \ell,$ where 
$$
\langle \lambda, \alpha \rangle =\frac{2(\lambda,\alpha)}{(\alpha,\alpha)}
$$
for $\lambda,\alpha\in X(T)$, $\alpha\ne 0$. Set $X^+(T)=\{\lambda\in X(T):\langle \lambda,\alpha\rangle \geq 0 \mbox{ for all } \alpha\in \Pi\}$ and call a character $\lambda\in X^+(T)$ a \emph{dominant $T$-weight} (or simply \emph{dominant weight}, 
if the choice of torus is clear in the context). Finally, we say that $\mu\in X(T)$ is \emph{under} $\lambda\in X(T)$ 
(and we write $\mu\preccurlyeq \lambda$) if $\lambda-\mu=\sum_{r=1}^\ell{c_r\alpha_r}$ for some 
$c_1,\ldots,c_\ell\in \mathbb{Z}_{\geq 0}.$ We also write $\mu \prec \lambda$ to indicate that $\mu\preccurlyeq \lambda$ and $\mu\neq \lambda.$

\subsection{Rational modules}\label{Rational_modules}     

In this section, we recall some elementary facts on weights and multiplicities, as well as basic properties of Weyl and irreducible modules for $G.$  Let $V$ be a finite-dimensional, rational $KG$-module. Then 
$$
V=\bigoplus_{\mu\in X(T)}V_\mu,
$$
where for  $\mu\in X(T),$ we have $V_\mu=\{v\in V:tv=\mu(t)v \mbox{ for all } t\in T\}.$ A weight $\mu\in X(T)$ is called a \emph{weight of $V$} if $V_\mu\neq 0,$ in which case $V_\mu$ is said to be its corresponding \emph{weight space}. Also, we denote by $\m_V(\mu)$ the \emph{multiplicity} of $\mu$ in $V,$ and let $\Lambda(V)=\{\mu\in X(T):V_\mu \neq 0\}$ denote the set of weights of $V$ and write $\Lambda^+(V)=\Lambda(V)\cap X^+(T)$ for the set of dominant weights of $V.$ It is well-known that each weight of $V$ is $\mathscr{W}$-conjugate to a unique dominant weight in $\Lambda^+(V).$ Also, if $\lambda\in X^+(T),$ then $w\lambda\preccurlyeq \lambda$ for every $w\in \mathscr{W}$, and all weights in a $\mathscr{W}$-orbit have the same multiplicity.

A non-zero vector $v^+\in V$ is called a \emph{maximal vector} of weight $\lambda\in \Lambda(V)$ for the pair $(B,T)$  if $v^+\in V_\lambda$ and  $Bv^+\subseteq \langle v^+\rangle _K.$ Now for $\lambda\in X^+(T)$ a dominant weight, we write $V_G(\lambda)$ for the Weyl module having highest weight $\lambda,$ and denote by $L_G(\lambda)$ the unique irreducible quotient of $V_G(\lambda).$ In other words, 
$$
L_G(\lambda)= V_G(\lambda)/\rad (\lambda),
$$
where $\rad (\lambda)$ is the unique maximal submodule of $V_G(\lambda),$ called the \emph{radical} of $V_G(\lambda).$ 
We write $\Lambda(\lambda)$ for $\Lambda(V_G(\lambda))$ and $\Lambda^+(\lambda)$ for $\Lambda^+(V_G(\lambda)).$ Also, 
we denote by $H^0(\lambda)$ the induced $KG$-module having highest weight $\lambda.$ Recall that $H^0(\lambda)$ has a 
unique simple submodule, isomorphic to $L_G(\lambda),$ and that 
$$
\Lambda(H^0(\lambda))=\Lambda(-w_0\lambda),
$$
where $w_0$ denotes the longest element in $\mathscr{W}.$  For $\mu\in X^+(T),$ we write $[V,L_G(\mu)]$ to denote the 
number of times the irreducible $KG$-module $L_G(\mu)$ appears as a composition factor of $V.$ We also use the notation 
$$
U_{\alpha}=\{x_\alpha(c):c\in K\}
$$
to denote the $T$-root subgroup of $G$ corresponding to the root $\alpha\in \Phi$ (that is, $x_\alpha:K\to G$ is a morphism of algebraic groups inducing an isomorphism onto $\im(x_\alpha)$, such that $tx_\alpha(c) t^{-1}=x_\alpha(\alpha(t)c)$ for $t\in T$ and $c\in K$). Finally, we fix a Chevalley basis $\mathscr{B}=\{f_{\alpha},h_r,e_{\alpha} : \alpha \in \Phi^+, 1\leq r\leq \ell\}
$ for the Lie algebra $\Lie(G)$ of $G,$ compatible with our choice of $T\subset B,$ where $e_{\alpha}\in \Lie(G)_{\alpha},$ $f_{\alpha}\in \Lie(G)_{-\alpha}$ are root vectors for $\alpha\in \Phi^+$ and $h_r=[e_{\alpha_r},f_{\alpha_r}]$ for $1\leq r\leq \ell.$ The proof of the following result can be deduced from applying the Poincar\'e-Birkhoff-Witt Theorem \cite{Bourbaki_2} to \cite[A. 6.4]{Curtis}.

\begin{lem}\label{Generators_for_weight_spaces_in_irreducibles}
Let $\lambda\in X^+(T)$ be a $p$-restricted dominant weight for $T,$ and let $V=L_G(\lambda)$. Also let  $v^+\in V$ be a maximal vector of weight $\lambda$ for $B,$ and let $\mu \in \Lambda(V)$. Then for any fixed ordering $\leq$ on $\Phi^+,$ we have  
$$
V_\mu=\left\langle f_{\gamma_1}\cdots f_{\gamma_k}v^+ : k\in \mathbb{Z}_{\geq  0},~\gamma_1,\ldots,\gamma_k\in \Phi^+,~\gamma_1\leq \ldots \leq \gamma_k,~\sum_{r=1}^k{\gamma_r}=\lambda-\mu\right\rangle_K.
$$
\end{lem}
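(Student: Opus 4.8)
The plan is to derive the spanning statement from the triangular decomposition of the universal enveloping algebra $\mathfrak{U}=\mathfrak{U}(\Lie(G))$ provided by the Poincar\'e--Birkhoff--Witt theorem, using that the $p$-restrictedness of $\lambda$ makes $V=L_G(\lambda)$ a cyclic $\mathfrak{U}$-module generated by $v^+$. First I would record this generation statement: for $p$-restricted $\lambda$, the module $V$ is irreducible already as a module for $\Lie(G)$ (equivalently for the restricted enveloping algebra), so that $V=\mathfrak{U}v^+$; this is the content of \cite[A.6.4]{Curtis}.

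Next I would invoke PBW. With respect to a total order on the Chevalley basis $\sb$ refining the fixed order $\leq$ on $\Phi^+$, the ordered monomials in the $f_\beta$, $h_r$ and $e_\beta$ form a $K$-basis of $\mathfrak{U}$, yielding the factorization $\mathfrak{U}=\mathfrak{U}^-\mathfrak{U}^0\mathfrak{U}^+$, where $\mathfrak{U}^-,\mathfrak{U}^0,\mathfrak{U}^+$ are the subalgebras generated by the $f_\beta$ $(\beta\in\Phi^+)$, the $h_r$ $(1\leq r\leq \ell)$, and the $e_\beta$ $(\beta\in\Phi^+)$, respectively. Since $v^+$ is a maximal vector, $e_\beta v^+=0$ for all $\beta\in\Phi^+$ and $h_rv^+\in\langle v^+\rangle_K$, so $\mathfrak{U}^+v^+=\mathfrak{U}^0v^+=\langle v^+\rangle_K$ and hence $V=\mathfrak{U}v^+=\mathfrak{U}^-v^+$. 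As $\mathfrak{U}^-$ is spanned by the ordered monomials $f_{\gamma_1}\cdots f_{\gamma_k}$ with $\gamma_1\leq\cdots\leq\gamma_k$ in $\Phi^+$, the vectors $f_{\gamma_1}\cdots f_{\gamma_k}v^+$, taken over all such ordered tuples, span $V$.

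It then remains to separate these spanning vectors by weight. Because the $T$-weight grading of $V$ is compatible with the action of the root spaces of $\Lie(G)$, so that $\Lie(G)_{-\beta}$ carries $V_\nu$ into $V_{\nu-\beta}$, an easy induction on $k$ shows that $f_{\gamma_1}\cdots f_{\gamma_k}v^+\in V_{\lambda-(\gamma_1+\cdots+\gamma_k)}$. Thus the spanning set of $V$ consists of weight vectors, and projecting onto the $\mu$-weight space finishes the argument: a given $v\in V_\mu$ is a $K$-linear combination of these spanning vectors, and extracting its $\mu$-component discards every monomial with $\gamma_1+\cdots+\gamma_k\neq\lambda-\mu$ while retaining exactly those with $\gamma_1+\cdots+\gamma_k=\lambda-\mu$. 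This is the asserted description of $V_\mu$.

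The step I expect to be the crux, and the only one relying on a genuine hypothesis rather than formal manipulation, is the generation $V=\mathfrak{U}v^+$ in characteristic $p$: over the ordinary (non-divided-power) enveloping algebra this can fail when $\lambda$ is not $p$-restricted, in which case one is forced to use divided powers $f_\beta^{(m)}$ from the distribution algebra. Here $p$-restrictedness of $\lambda$ is precisely what guarantees that $V$ is irreducible as a $\Lie(G)$-module, hence cyclic under $\mathfrak{U}$, so I would apply \cite[A.6.4]{Curtis} carefully at this point and nowhere appeal to the full distribution algebra.
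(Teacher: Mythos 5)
Your proposal is correct and follows essentially the same route as the paper, which proves the lemma precisely by applying the Poincar\'e--Birkhoff--Witt theorem to Curtis's result that $p$-restrictedness makes $V$ irreducible (hence cyclic on $v^+$) over $\Lie(G)$; your triangular decomposition $\mathfrak{U}=\mathfrak{U}^-\mathfrak{U}^0\mathfrak{U}^+$, the reduction to $V=\mathfrak{U}^-v^+$, and the weight-space projection simply spell out the details the paper leaves to the reader. You also correctly identify the crux: without $p$-restrictedness one would need divided powers in the distribution algebra, and the appeal to Curtis is exactly what avoids this.
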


We conclude this section by illustrating how Lemma \ref{Generators_for_weight_spaces_in_irreducibles} can provide information on  weight multiplicities in certain irreducible $KG$-modules in the case where $G$ is of type $A_\ell$ $(\ell \geq 2)$ over $K.$  Consider the dominant $T$-weights $\lambda=a\lambda_1+b\lambda_\ell$ $(a,b\geq 1)$ and $\mu =(a-1)\lambda_1+(b-1)\lambda_\ell.$ Writing $V=L_G(\lambda),$ an application of Lemma \ref{Generators_for_weight_spaces_in_irreducibles} then shows that $V_\mu$ is spanned by $f_{\alpha_1+\cdots+\alpha_r}f_{\alpha_{r+1}+\cdots + \alpha_\ell}v^+,$ for $1\leq r\leq \ell-1,$ together with $f_{\alpha_1+\cdots+\alpha_\ell}v^+,$ where $v^+$ is a maximal vector in $V$ for $B.$ (We used the fact that $f_{\alpha_i+\dots+\alpha_j}v^+=0$ for $1< i\leq j < \ell$ together with the commutator formula.) Finally, we  set 
\begin{equation}
V_{r,\ell} = \langle f_{\alpha_1+\cdots+\alpha_r}f_{\alpha_{r+1}+\cdots +\alpha_\ell} v^+ : 1\leq r\leq \ell-1\rangle_K.
\label{V_mu_for_type_A}
\end{equation}

\begin{prop}\label{8.6_Seitz}
Assume $G$ is of type $A_\ell$ over $K$ for some $\ell \in \Z_{\geq 2}$ and consider the dominant $T$-weight $\lambda=a\lambda_1+b\lambda_\ell\in X^+(T),$ $1\leq a,b<p.$ Also let $\mu=(a-1)\lambda_1+(b-1)\lambda_\ell.$ Then the following assertions are equivalent.
\begin{enumerate}
\item \label{type_A_1} The weight $\mu$ affords the highest weight of a composition factor of $V_G(\lambda).$
\item \label{type_A_2} The inequality $\m_{V_G(\lambda)}(\mu) > \m_V(\mu) $ is satisfied.
\item \label{type_A_3} The generators in \eqref{V_mu_for_type_A} are linearly dependent.
\item \label{type_A_4} The element $f_{\alpha_1+\cdots + \alpha_\ell}v^+$ belongs to $V_{1,\ell}.$
\item \label{type_A_5} The divisibility condition $p\mid a+b+\ell -1$ is satisfied.
\end{enumerate}
\end{prop}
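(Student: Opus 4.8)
The plan is to let a single computation of raising operators on the spanning set of $V_\mu$ drive all five equivalences. Write $\tilde\alpha=\alpha_1+\cdots+\alpha_\ell$ for the highest root, so that $\mu=\lambda-\tilde\alpha$, and recall from the paragraph preceding \eqref{V_mu_for_type_A} that $V_\mu$ is spanned by the $\ell$ vectors $w_r=f_{\alpha_1+\cdots+\alpha_r}f_{\alpha_{r+1}+\cdots+\alpha_\ell}v^+$ $(1\le r\le \ell-1)$ and $u=f_{\alpha_1+\cdots+\alpha_\ell}v^+$. I would compute inside $\SymP^a(W)\otimes\SymP^b(W^*)$, where $W$ is the natural module, $v^+=x_1^a\otimes y_{\ell+1}^b$, and $f_{\alpha_i+\cdots+\alpha_j}$ acts as the matrix unit $E_{j+1,i}$: the vectors $w_r$ and $u$ are then linear combinations of distinct monomials, and in particular are linearly independent, so that $\m_{V_G(\lambda)}(\mu)=\ell$. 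The heart of the argument is to find the maximal vectors of weight $\mu$ in $V_G(\lambda)$. Writing $z=\sum_{r}c_rw_r+c_0u$ and imposing $e_{\alpha_k}z=0$ for $1\le k\le\ell$, the middle relations $(2\le k\le\ell-1)$ force $c_1=\cdots=c_{\ell-1}=:c$, while $k=\ell$ gives $c_0=-bc$ and $k=1$ gives $c_0=(a+\ell-1)c$. These are compatible with $(c_0,c_1,\dots,c_{\ell-1})\neq 0$ precisely when $a+b+\ell-1\equiv 0 \pmod p$, and then the maximal vectors of weight $\mu$ form a one-dimensional space, spanned by $z_0=\sum_{r}w_r-b\,u$.

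Granting this, the equivalences follow from the interplay of maximal vectors, composition factors and multiplicities. A nonzero maximal vector $z_0$ of weight $\mu\prec\lambda$ generates a proper submodule with head $L_G(\mu)$, giving \eqref{type_A_1}; and a composition factor $L_G(\mu)$ necessarily lies in $\rad(\lambda)$, so $\m_{V_G(\lambda)}(\mu)>\m_V(\mu)$, giving \eqref{type_A_2}. For the converse I would first check that $\rad(\lambda)_\eta=0$ for every weight $\eta\succ\mu$: the only dominant weights strictly between $\mu$ and $\lambda$ are $\lambda-\alpha_1$, $\lambda-\alpha_\ell$ and, when $\ell\ge3$, $\lambda-\alpha_1-\alpha_\ell$, and the spanning vectors $f_{\alpha_1}v^+$, $f_{\alpha_\ell}v^+$, $f_{\alpha_1}f_{\alpha_\ell}v^+$ of the corresponding one-dimensional weight spaces have contravariant norms $a$, $b$, $ab$, all nonzero since $1\le a,b<p$; thus these weights lie outside $\rad(\lambda)$. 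Consequently every element of $\rad(\lambda)_\mu$ is automatically a maximal vector, since each $e_{\alpha_k}$ sends it into $\rad(\lambda)_{\mu+\alpha_k}=0$; hence $\rad(\lambda)_\mu\neq0$ (that is, \eqref{type_A_2}) is equivalent to the existence of a maximal vector of weight $\mu$, which by the first paragraph is \eqref{type_A_5}. This settles \eqref{type_A_1}$\Leftrightarrow$\eqref{type_A_2}$\Leftrightarrow$\eqref{type_A_5}.

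It remains to bring in \eqref{type_A_3} and \eqref{type_A_4}. Since $V_\mu$ is spanned by the $\ell$ vectors $w_1,\dots,w_{\ell-1},u$ and $\m_{V_G(\lambda)}(\mu)=\ell$, condition \eqref{type_A_2} is exactly the assertion that these spanning vectors are linearly dependent, namely \eqref{type_A_3}. To identify the dependence I would show that the $w_r$ remain independent in $V=L_G(\lambda)$: applying $e_{\alpha_k}$ to a relation $\sum_rd_rw_r=0$ and again using $\rad(\lambda)_{\mu+\alpha_k}=0$ forces all $d_r$ equal and then zero. Hence any dependence among $w_1,\dots,w_{\ell-1},u$ has nonzero coefficient on $u$; concretely, reducing $z_0$ modulo $\rad(\lambda)$ gives the relation $\sum_r\bar w_r=b\,\bar u$ in $V$, so that $u\in V_{1,\ell}$, which is \eqref{type_A_4}, while \eqref{type_A_4} trivially yields \eqref{type_A_3}. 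I expect the principal obstacle to be the raising-operator computation of the first paragraph: organising the action of the $e_{\alpha_k}$ on the $w_r$ and $u$ so that the single congruence $a+b+\ell-1\equiv0$ emerges is the crux of the matter. The secondary difficulty is ensuring that a drop in multiplicity genuinely produces a maximal vector of weight $\mu$, which is precisely what the vanishing of $\rad(\lambda)_\eta$ for $\eta\succ\mu$ supplies.
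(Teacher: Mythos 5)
Your proposal is correct, and although its computational engine coincides with the paper's (applying the raising operators $e_{\alpha_k}$ to $z=\sum_r c_rw_r+c_0u$ and extracting the relations $c_1=\cdots=c_{\ell-1}=c$, $c_0=-bc$, $c_0=(a+\ell-1)c$, whose compatibility is exactly $p\mid a+b+\ell-1$; your coefficients check out, up to signs depending on the choice of Chevalley basis), the surrounding architecture is genuinely different. The paper works downstairs in $V=L_G(\lambda)$: it proves the equivalence of (i) and (ii) by noting that the intermediate dominant weights $\lambda-\alpha_1$, $\lambda-\alpha_\ell$ have multiplicity $1$ in $V_G(\lambda)$ and invoking \cite{premet}; it obtains $\m_{V_G(\lambda)}(\mu)=\ell$ from Freudenthal's formula, giving the equivalence of (ii) and (iii); and it closes (iii) $\Leftrightarrow$ (iv) $\Leftrightarrow$ (v) by observing that a weight-$\mu$ vector $w^+$ with $Bw^+=\langle w^+\rangle_K$ must be \emph{zero} in the irreducible module, so that the congruence amounts to a dependence with nonzero coefficient on $u$. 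You work upstairs in the Weyl module, where the same vector $z_0=\sum_r w_r-b\,u$ is genuinely nonzero and spans $\rad(\lambda)_\mu$; the hinge of your argument, $\rad(\lambda)_\eta=0$ for all $\eta\succ\mu$ (via the contravariant norms $a$, $b$, $ab\neq 0$ and the $\mathscr{W}$-invariance of the weights of the submodule $\rad(\lambda)$), simultaneously replaces \cite{premet}, converts the multiplicity drop in (ii) into a maximal vector, and yields the linear independence of the $w_r$ in $V$ needed for (iii) $\Rightarrow$ (iv). Your route is more self-contained (no Premet, no Freudenthal) and gives slightly more (an explicit generator of $\rad(\lambda)_\mu$); the paper's is shorter by outsourcing these inputs.

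Two steps should be made explicit. First, the cyclic submodule of $\SymP^a(W)\otimes\SymP^b(W^*)$ generated by $x_1^a\otimes y_{\ell+1}^b$ is only a \emph{quotient} of $V_G(\lambda)$, so your distinct-monomial computation gives $\m_{V_G(\lambda)}(\mu)\geq\ell$, not equality; moreover your maximal-vector analysis tacitly writes an arbitrary element of $V_G(\lambda)_\mu$ as $\sum_r c_rw_r+c_0u$. Both require the upper bound, i.e.\ that the $\ell$ listed vectors span the Weyl-module weight space. This holds by the same argument as in the paragraph preceding \eqref{V_mu_for_type_A}: since $\lambda-\mu=\alpha_1+\cdots+\alpha_\ell$ is multiplicity-free, no divided powers intervene, and $f_{\alpha_i+\cdots+\alpha_j}v^+=0$ for $1<i\leq j<\ell$ already in $V_G(\lambda)$ because $\lambda-\alpha_i-\cdots-\alpha_j$ is not a weight there; alternatively one quotes Freudenthal together with the characteristic-independence of Weyl characters, as the paper does. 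Second, your relations $e_{\alpha_k}z=0$ are computed in the model, so transferring them back to $V_G(\lambda)$ needs $V_G(\lambda)_{\mu+\alpha_k}$ to inject into the model; this is immediate once you note (same spanning argument) that each such weight space is spanned by the single vector $f_{\alpha_1+\cdots+\alpha_{k-1}}f_{\alpha_{k+1}+\cdots+\alpha_\ell}v^+$ (read as $f_{\alpha_2+\cdots+\alpha_\ell}v^+$ for $k=1$ and $f_{\alpha_1+\cdots+\alpha_{\ell-1}}v^+$ for $k=\ell$), whose image is visibly nonzero since $ab\neq 0$ in $K$. With these two routine insertions your argument is complete.
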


\begin{proof}
Clearly \ref{type_A_1} implies \ref{type_A_2}. Conversely, the only dominant weights $\nu\in \Lambda^+(V)$ such that $\mu \prec \nu \prec \lambda$ are $\lambda-\alpha_1$ (if $a>1$) and  $\lambda-\alpha_\ell$ (if $b>1$). These weights have multiplicity $1$ in $V_G(\lambda)$ and hence none of them can afford the highest weight of a  composition factor of $V_G(\lambda)$ by \cite{premet}. Consequently \ref{type_A_2} implies \ref{type_A_1}. Now an application of Freudenthal's formula yields $\m_{V_G(\lambda)}(\mu)=\ell, $ thus showing that $\ell = \m_{V_G(\lambda)}(\mu) > \m_V(\mu) $ if and only if the $\ell$ generators in \eqref{V_mu_for_type_A} are linearly dependent. Therefore \ref{type_A_2} and \ref{type_A_3} are equivalent as well. Finally, let  $ (\eta_r)_{1\leq r\leq \ell} \in K^\ell$ and set 
$$
w^+= \sum_{r=1}^{\ell-1}{\eta_r f_{\alpha_1+\cdots \alpha_r}f_{\alpha_{r+1}+\cdots +\alpha_\ell}v^+} + \eta_\ell f_{\alpha_1+\cdots+\alpha_\ell}v^+.
$$
A straightforward calculation shows that  $Bw^+=\langle w^+\rangle_K$ if and only if  $p \mid a + b + \ell -1$ and $\eta_\ell\neq 0,$ in which case $w^+=0$ (since $V$ is irreducible and $w^+ \notin V_\lambda).$ This shows  that \ref{type_A_3}, \ref{type_A_4} and \ref{type_A_5} are equivalent, thus completing the proof.
\end{proof}

\section{Proof of Theorem \ref{Main_result_1}}     
\label{Proof_of_Theorem_1}                         

Let $K$ be an algebraically closed field having characteristic $p\geq 0$ and let $Y=\Spin_{2n+2}(K)$ be a simply 
connected, simple algebraic group of type $D_{n+1}$ over $K,$ with $n\geq 2.$ 
Let $X\subset Y$ be the subgroup of type $B_n,$ embedded in the usual way, as the stabilizer of a 
non-singular $1$-dimensional subspace of the natural $(2n+2)$-dimensional module for $Y.$ Fix $T_Y$ a 
maximal torus of $Y$ and $T_X$ a maximal torus of $X$ such that $T_X\subset T_Y$ and let 
$T_Y \subset B_Y,$ $T_X \subset B_X$ denote Borel subgroups of $Y,X$ respectively, with 
$B_X\subset B_Y.$ Let $\Pi(Y)=\{\alpha_1,\ldots,\alpha_{n+1}\}$ be the corresponding base for the root system of $Y,$ and denote by $\{\lambda_1,\ldots,\lambda_{n+1}\}$ the corresponding set of fundamental weights for $T_Y,$ where the natural $KY$-module has highest weight $\lambda_1.$ Let $\sigma$ be a graph automorphism of $Y$ stabilizing $T_Y,$ and with $X=Y^\sigma$, the group of $\sigma$-fixed points. Finally, let $\Pi(X)=\{\beta_1,\ldots,\beta_n\}$ be the base for the corresponding root system of $X,$ associated with the choice of Borel subgroup $B_X$, and denote by $\{\omega_1,\ldots,\omega_n\}$ the associated set of fundamental dominant weights for $T_X.$

\subsection{Preliminary considerations}

For $\sigma$ as above and for a $KY$-module $V,$ let $^\sigma V$ denote the vector space $V$ equipped with the $Y$-action $gv=\sigma(g)v,$ for $g\in Y,$ $v\in V.$ Clearly $^\sigma V$ is irreducible if and only if $V$ is.

\begin{lem}\label{Irreducible_KY-modules_are_self-dual_for_X}
Let $V$ be an irreducible, finite-dimensional, rational $KY$-module. Then   $V|_X$ is self-dual.
\end{lem}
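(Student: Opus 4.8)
The plan is to identify the dual $(V|_X)^\ast$ with a twist of $V|_X$ and then exploit the fact that the graph automorphism $\sigma$ acts trivially on $X=Y^\sigma$. First I would write $V\cong L_Y(\lambda)$ for a dominant weight $\lambda\in X(T_Y)$ and recall the standard fact that the dual $KY$-module satisfies $V^\ast\cong L_Y(-w_0\lambda)$, where $w_0$ is the longest element of $\mathscr{W}$ (this is compatible with the identity $\Lambda(H^0(\lambda))=\Lambda(-w_0\lambda)$ recorded above; see also \cite{Jantzen}). Since duality commutes with restriction, $(V|_X)^\ast=(V^\ast)|_X$ as $KX$-modules, so it suffices to prove $(V^\ast)|_X\cong V|_X$.

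Next I would compute $-w_0$ for $Y$ of type $D_{n+1}$. Consulting \cite{Bourbaki}, the longest element of the Weyl group of $D_{n+1}$ acts on $X(T_Y)$ as $-1$ when $n+1$ is even and as $-\sigma$ when $n+1$ is odd, where $\sigma$ is precisely the order-two graph automorphism interchanging $\lambda_n$ and $\lambda_{n+1}$. Thus $-w_0$ is the identity on $X(T_Y)$ in the even case and coincides with $\sigma$ in the odd case. Comparing highest weights then yields $V^\ast\cong V$ when $n+1$ is even, and $V^\ast\cong{}^\sigma V$ when $n+1$ is odd, the latter because ${}^\sigma V$ is the irreducible $KY$-module of highest weight $\sigma(\lambda)=-w_0\lambda$.

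Finally I would restrict to $X$. By the definition recalled just above, ${}^\sigma V$ carries the action $g\cdot v=\sigma(g)v$; for $g\in X=Y^\sigma$ one has $\sigma(g)=g$, so the $X$-action on ${}^\sigma V$ agrees with that on $V$, giving ${}^\sigma V|_X=V|_X$ on the nose. Combining with the previous step, in both parity cases $(V^\ast)|_X\cong V|_X$, which is exactly the desired assertion $(V|_X)^\ast\cong V|_X$.

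The only step requiring genuine care is the computation of $-w_0$ for $D_{n+1}$, together with the verification that, in the odd case, it is realized by the \emph{same} graph automorphism $\sigma$ used to define $X$, rather than some other diagram symmetry. Once this identification is secured, the triviality of the $\sigma$-action on $Y^\sigma$ makes the final restriction step immediate, and no further computation with weights or multiplicities is needed.
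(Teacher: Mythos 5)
Your proposal is correct and follows essentially the same route as the paper's proof: identify $V^\ast\cong L_Y(-w_0\lambda)$, note that $-w_0=1$ when $n+1$ is even and $-w_0=\sigma$ when $n+1$ is odd (the paper cites \cite[Exercise 78, Lemma 78]{St1} where you cite Bourbaki, but it is the same fact), and conclude via $({}^\sigma V)|_X=V|_X$ since $\sigma$ fixes $X=Y^\sigma$ pointwise. No gaps; the one point you flag as needing care, that the diagram symmetry realizing $-w_0$ is the $\sigma$ defining $X$, is harmless here since the two can differ only by an inner automorphism, which does not affect the isomorphism type of the restriction.
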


\begin{proof}
Let $\lambda\in X^+(T_Y)$ be the highest weight of $V.$ Then  $V^*$ has highest weight $-w_0 \lambda.$ If $n+1$ is even, then $-w_0=1$ by \cite[Exercise 78]{St1} and so $V$ is self-dual as a $KY$-module, from which the desired result follows. If on the other hand $n+1$ is odd, then $ ^\sigma V \cong V^*$ 	by \cite[Lemma 78]{St1}, yielding $(V |_X)^* =(V^*)|_X \cong  (^\sigma V)|_X  = V|_X,$ since $\sigma(x) =x$ for all $\in X$. The result follows.
\end{proof}

Let $\mathscr{B}_Y=\{f_{\alpha},h_r,e_{\alpha} : \alpha \in \Phi^+(Y), 1\leq r\leq n+1\}$ be  a Chevalley basis  for the Lie algebra $\Lie(Y)$ of $Y,$ compatible with our choice of $T_Y\subset B_Y,$ as in Section \ref{Rational_modules}. As in \cite[Section 8]{seitz}, we may assume that the $B_n$-type subalgebra $\Lie(X)$ of $\Lie(Y)$ is generated by the root vectors
\begin{align}
e_{\beta_r}		&=		e_{\alpha_r} \mbox{ for $1\leq r\leq n-1,$ }	\cr
e_{\beta_n}		&=		e_{\alpha_n} +e_{\alpha_{n+1}},					\cr	
f_{\beta_r}		&=		f_{\alpha_r} \mbox{ for $1\leq r\leq n-1,$ }	\cr
f_{\beta_n}		&=		f_{\alpha_n}+f_{\alpha_{n+1}}.	
\label{generators_for_B}
\end{align}

In particular, we get that $\alpha_j|_{T_X}=\beta_j$ for $1\leq j \leq n-1,$ while $\alpha_{n+1}|_{T_X}=\alpha_n|_{T_X}=\beta_n,$ so that $ \lambda_j|_{T_X}=\omega_j$ for $1\leq j\leq n-1,$ $\lambda_n|_{T_X}=\lambda_{n+1}|_{T_X}=\omega_n$ by \cite[Section 13.2, p.69]{Humphreys1}. Also for $1\leq i\leq j \leq n+1,$  write $f_{i,j}=f_{\alpha_i+\cdots+\alpha_j},$ where we set $f_{i,i}=f_{\alpha_i}$ for $1\leq i\leq n+1$  and $f_{n,n+1}=0$  by convention. In a similar fashion, for $1\leq k\leq n,$ we set 
$$
\hat{f}_{k,n+1}=f_{\alpha_k+\cdots+\alpha_{n-1}+\alpha_{n+1}},
$$ 
where again, we adopt the convention $\hat{f}_{n,n+1}=f_{\alpha_{n+1}}.$ Finally, for $1\leq i < j \leq n-1,$ we set 
\[
F_{i,j}=f_{\alpha_i+\cdots+\alpha_{j-1}+2\alpha_j + \cdots + 2\alpha_{n-1}+\alpha_n+\alpha_{n+1}},
\]
where $F_{i,i+1} =f_{\alpha_i+2\alpha_{i+1}+\cdots+2\alpha_{n-1}+\alpha_n+\alpha_{n+1}}$ and $F_{i,n-1}= f_{\alpha_i+\cdots+\alpha_{n-2}+2\alpha_{n-1} + \alpha_n+\alpha_{n+1}}$ for every $1\leq i\leq n-2.$ We will require the following relations in $\Lie(X).$

\begin{lem}\label{All_f_beta's}
Adopting the notation introduced above, we have
\begin{enumerate}
\item \label{All_f_beta's_1} $f_{\beta_r+\cdots+\beta_s} = \pm f_{r,s}$ for $1\leq r\leq s\leq n-1.$
\item \label{All_f_beta's_2} $f_{\beta_r+\cdots+\beta_n}= \pm  ( f_{r,n}  \pm \hat{f}_{r,n+1})$ for $1\leq r\leq n.$
\item \label{All_f_beta's_3}$f_{r,n+1} \in \Lie(X)$  for $1\leq r\leq n-1.$
\item \label{All_f_beta's_4} $F_{r,s+1}\in \Lie(X)$  for $1\leq r\leq s\leq n-2.$
\end{enumerate}
\end{lem}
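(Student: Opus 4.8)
The plan is to pass to the standard coordinate realization of $\Phi(Y)$: choose an orthonormal basis $\e_1,\dots,\e_{n+1}$ so that $\alpha_i=\e_i-\e_{i+1}$ for $1\le i\le n$ and $\alpha_{n+1}=\e_n+\e_{n+1}$, with $\sigma$ acting by $\e_{n+1}\mapsto-\e_{n+1}$ and fixing the remaining $\e_j$. Restriction to $T_X$ is then the map setting $\e_{n+1}=0$, and a short computation identifies the relevant root vectors: $f_{r,s}=f_{\e_r-\e_{s+1}}$, $\hat f_{r,n+1}=f_{\e_r+\e_{n+1}}$, $f_{r,n+1}=f_{\e_r+\e_n}$ and $F_{r,s+1}=f_{\e_r+\e_{s+1}}$. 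The key structural observation is that a long root $\pm\e_i\pm\e_j$ ($i<j\le n$) of $X$ lifts to a \emph{unique} root of $Y$, whereas a short root $\pm\e_i$ of $X$ lifts to exactly the two roots $\e_i\mp\e_{n+1}$; this dichotomy is what separates the treatment of (i),(ii) from that of (iii),(iv).

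For (i) I would note that, since $e_{\beta_r}=e_{\alpha_r}$ and $f_{\beta_r}=f_{\alpha_r}$ for $r\le n-1$, these generators span the same subalgebra of $\Lie(Y)$ as the $e_{\alpha_i},f_{\alpha_i}$ ($i\le n-1$), namely the type $A_{n-1}$ Levi subalgebra; within it the Chevalley vector attached to $\beta_r+\cdots+\beta_s$ equals $\pm f_{r,s}$. For (ii) I would use downward induction on $r$, with base case the defining relation $f_{\beta_n}=f_{\alpha_n}+f_{\alpha_{n+1}}=f_{n,n}+\hat f_{n,n+1}$. For the step I would write $f_{\beta_r+\cdots+\beta_n}=\pm[f_{\alpha_r},f_{\beta_{r+1}+\cdots+\beta_n}]$ and substitute the inductive expression $\pm(f_{r+1,n}\pm\hat f_{r+1,n+1})$; since $Y$ is simply laced, each of $[f_{\alpha_r},f_{r+1,n}]$ and $[f_{\alpha_r},\hat f_{r+1,n+1}]$ is a single root vector with structure constant $\pm1$ (the resulting roots are $\e_r-\e_{n+1}$ and $\e_r+\e_{n+1}$), giving $\pm(f_{r,n}\pm\hat f_{r,n+1})$. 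The only real labour here is bookkeeping the signs, which the ambiguous $\pm$ in the statement absorbs.

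For (iii) and (iv) I would deliberately avoid an explicit bracket and argue instead by weights, which keeps the argument valid in all characteristics. As $T_X\subseteq T_Y$, the subalgebra $\Lie(X)$ is $T_X$-stable, so $\Lie(X)_\mu\subseteq\bigoplus_{\gamma|_{T_X}=\mu}\Lie(Y)_\gamma$ for every $T_X$-weight $\mu$. Taking $\mu=-(\e_r+\e_n)$ for (iii) and $\mu=-(\e_r+\e_{s+1})$ for (iv), the target is a \emph{long} root of $X$, so by the dichotomy above the right-hand sum is the one-dimensional space $\langle f_{r,n+1}\rangle$, respectively $\langle F_{r,s+1}\rangle$. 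Since $\Lie(X)$ is the Lie algebra of a group of type $B_n$, the root space $\Lie(X)_\mu$ is one-dimensional and in particular nonzero; being contained in a one-dimensional space it must coincide with it, and hence $f_{r,n+1}\in\Lie(X)$, respectively $F_{r,s+1}\in\Lie(X)$.

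I expect the crux to be precisely this change of method. The naive attempt to prove (iii) by computing $[f_{\beta_r+\cdots+\beta_n},f_{\beta_n}]$ inside $\Lie(X)$ yields a scalar multiple $(N_1\pm N_2)f_{r,n+1}$ with $N_1,N_2=\pm1$; because $\e_r+\e_n$ is a long root realised as a sum of two short roots, this scalar is $\pm2$, so the identity degenerates to $0=0$ when $p=2$ and establishes nothing. The weight-space argument bypasses this entirely, relying only on the one-dimensionality of the root spaces of $\Lie(X)$ and on the unique liftability of long roots, and so I would organise the proof to isolate (iii),(iv) from (i),(ii) for exactly this reason.
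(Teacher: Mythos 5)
Your proposal is correct and follows essentially the same route as the paper: parts (i) and (ii) by induction on the length of the root string using the commutator relations from the defining generators (structure constants $\pm 1$ in the simply laced $\Lie(Y)$), and parts (iii) and (iv) by the weight-space argument, observing that $\Lie(X)_\mu$ for the long root $\mu$ is non-zero and contained in the one-dimensional $T_Y$-root space $\langle f_{r,n+1}\rangle_K$, respectively $\langle F_{r,s+1}\rangle_K$, forcing equality. Your closing remark correctly identifies why the paper also avoids a direct bracket computation for (iii) and (iv): the relevant structure constant is $\pm 2$, which vanishes when $p=2$.
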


\begin{proof}
We start by showing \ref{All_f_beta's_1}, arguing by induction on $0\leq s-r \leq n-2.$ If $r=s,$ then the assertion immediately follows from \eqref{generators_for_B}, so we assume $1\leq r<s\leq n-1$ in the remainder of the proof. We then successively get
\begin{align*}
f_{\beta_r+\cdots + \beta_s}		&=	\pm ( f_{\beta_{r+1}+\cdots +\beta_s}f_{\beta_r} -  f_{\beta_r}f_{\beta_{r+1}+\cdots+\beta_s} )\cr
								&=	\pm N_1( f_{r+1,s}f_{\alpha_r} - f_{\alpha_r}f_{r+1,s}) \cr
								&=	\pm N_1( N_2 f_{r,s} + f_{\alpha_r}f_{r+1,s} - f_{\alpha_r}f_{r+1,s} )\cr		
								& =	\pm N_1N_2 f_{r,s}
\end{align*}
for some $ N_1,N_2 \in \{\pm 1\},$ where the second equality follows from \eqref{generators_for_B} and our induction assumption. Therefore \ref{All_f_beta's_1} holds as desired. For the second assertion, we again argue by induction on $0\leq n-r \leq n-1.$ In the case where $r=n,$ then the result  holds by \eqref{generators_for_B}, hence we assume $1\leq r\leq n-1$ in the remainder of the proof. We then successively get 
\begin{align*}
f_{\beta_r+\cdots+\beta_n}	&=	\pm (f_{\beta_{r+1}+\cdots+\beta_n}f_{\beta_r} - f_{\beta_r}   f_{\beta_{r+1}+\cdots+\beta_n})	\cr
							&=	\pm N_1 ( f_{r+1,n}f_{\alpha_r} + N_2 \hat{f}_{r+1,n+1}f_{\alpha_r} - f_{\alpha_r}f_{r+1,n} -  N_2f_{\alpha_r}\hat{f}_{r+1,n+1} )	\cr
							&= 	\pm N_1 ( N_3 f_{r,n}  + N_2 \hat{f}_{r+1,n+1}f_{\alpha_r}  -  N_2f_{\alpha_r}\hat{f}_{r+1,n+1} ) \cr
							&= 	\pm N_1 ( N_3 f_{r,n}  + N_2N_4 \hat{f}_{r,n+1}) \cr
							&= 	\pm N_1N_3 (f_{r,n}  + N_2N_3N_4 \hat{f}_{r,n+1})
\end{align*}
for some $N_1,N_2,N_3,N_4\in \{\pm 1\},$ where again the second equality follows from \eqref{generators_for_B} and our induction assumption. Therefore \ref{All_f_beta's_2} holds as well. Next we show the third assertion, letting $1\leq r\leq n-1$ be fixed, and setting $\mu=\beta_r+\cdots+\beta_{n-1}+2\beta_n.$ The aforementioned root restrictions yield $\Lie(X)_{\mu}\subset \Lie(Y)_{\alpha_r+\cdots+\alpha_{n+1}},$ and since the latter $T_Y$-weight space is $1$-dimensional, we get that $\Lie(X)_\mu$ is at most $1$-dimensional as well. Now as $f_{\beta_r+\cdots+\beta_n+2\beta_{n+1}}\in \Lie(X)_\mu$ and $\Lie(Y)_{\alpha_r+\cdots+\alpha_{n+1}}=\langle f_{r,n+1}\rangle_K,$ we get that $f_{\beta_r+\cdots+\beta_n+2\beta_{n+1}}$ is a non-zero multiple of $f_{r,n+1},$ from which \ref{All_f_beta's_3} follows. Finally, the assertion \ref{All_f_beta's_4} can be dealt with in a similar fashion.
\end{proof}

In the remainder of this section, we let $V=L_Y(\lambda)$ be a non-trivial, irreducible $KY$-module having $p$-restricted highest weight $\lambda=\sum_{r=1}^{n+1}{a_r\lambda_r}\in X^+(T_Y),$ and fix a maximal vector $v^+$ in $V$ for $B_Y.$ Setting $\lambda|_{T_X}=\omega,$ one observes that $v^+$ is a maximal vector of weight $\omega$ in $V$ for $B_X,$ since $B_X\subset B_Y.$ The following result provides a necessary and sufficient condition for $V$ to be irreducible in the case where $a_n\neq 0 =a_{n+1}.$

\begin{lem}\label{V|X_irreducible_iff_V=Lie(X)v+}
Let $V=L_Y(\lambda)$ be as above, and assume $a_n  \neq 0=a_{n+1}.$  Then  $X$  acts irreducibly on $V$ if and only if $V=\Lie(X)v^+.$
\end{lem}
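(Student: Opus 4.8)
The plan is to exploit two facts: that $\omega:=\lambda|_{T_X}$ is $p$-restricted, and that $V|_X$ is self-dual (Lemma~\ref{Irreducible_KY-modules_are_self-dual_for_X}). First I would record the restriction of $\lambda$: using $\lambda_j|_{T_X}=\omega_j$ for $j\le n-1$ and $\lambda_n|_{T_X}=\lambda_{n+1}|_{T_X}=\omega_n$ together with $a_{n+1}=0$, one gets $\omega=\sum_{j=1}^{n-1}a_j\omega_j+a_n\omega_n$, whose coefficients are all $<p$; hence $\omega$ is a $p$-restricted dominant weight for $X$ and $v^+$ is a maximal vector of weight $\omega$ for $B_X$. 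The essential preliminary observation, on which everything hinges, is that the $T_X$-weight space $V_\omega$ is one-dimensional, spanned by $v^+$. To see this I would take $\mu\in\Lambda(V)$ with $\mu|_{T_X}=\omega$, write $\lambda-\mu=\sum_{r=1}^{n+1}c_r\alpha_r$ with $c_r\in\Z_{\geq 0}$, and restrict to $T_X$ to obtain $\sum_{r=1}^{n-1}c_r\beta_r+(c_n+c_{n+1})\beta_n=0$; linear independence of $\Pi(X)$ forces all $c_r=0$, so $\mu=\lambda$. Thus $V_\omega=V_\lambda$ as $T_X$-weight spaces, of dimension $1$.

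For the forward implication, I would assume $V|_X$ is irreducible, so that $V|_X\cong L_X(\omega)$, the irreducible $KX$-module of highest weight $\omega$. Since $\omega$ is $p$-restricted, $L_X(\omega)$ is irreducible already as a module for the enveloping algebra $\mathfrak{U}_X$ of $\Lie(X)$ (the same principle, used for $Y$ in the discussion preceding Lemma~\ref{Generators_for_weight_spaces_in_irreducibles}, that a $p$-restricted irreducible is generated by its maximal vector under the enveloping algebra). As $\mathfrak{U}_X v^+$ is a nonzero $\mathfrak{U}_X$-submodule it must equal all of $V$, giving $V=\Lie(X)v^+$.

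The substantive direction is the converse, and the main obstacle is to upgrade ``$v^+$ generates $V$'' to genuine irreducibility. Assuming $V=\Lie(X)v^+=\mathfrak{U}_X v^+$, I would first note that every proper $\mathfrak{U}_X$-submodule $W$ fails to contain $v^+$ (otherwise $W\supseteq\mathfrak{U}_X v^+=V$), so $W\cap V_\omega=0$ by one-dimensionality of $V_\omega$; hence the sum $R$ of all proper submodules satisfies $R_\omega=\sum_W W_\omega=0$, showing $R$ is proper and that $V$ has simple head $V/R\cong L_X(\omega)$. Next I would invoke Lemma~\ref{Irreducible_KY-modules_are_self-dual_for_X}: $V|_X$ is self-dual, and since $-w_0=\mathrm{id}$ for the Weyl group of $X=B_n$ every irreducible $KX$-module is self-dual, whence $\soc(V|_X)\cong(\mathrm{hd}\,V|_X)^{*}\cong L_X(\omega)$. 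In particular the socle contains a nonzero vector of weight $\omega$, which by one-dimensionality of $V_\omega$ is a scalar multiple of $v^+$. Therefore $v^+\in\soc(V)$, and since $v^+$ generates $V$ we get $V=\mathfrak{U}_X v^+\subseteq\soc(V)$, so $V|_X$ is semisimple; combined with the simple head this forces $V|_X\cong L_X(\omega)$ to be irreducible. The delicate points to get right are the compatibility of passing to $T_X$-weight spaces with the submodule sum defining $R$ (so that $\dim V_\omega=1$ genuinely yields a simple head) and the correct use of self-duality to place a weight-$\omega$ vector inside the socle; the one-dimensionality of $V_\omega$ is precisely what links head and socle and collapses the module to a single composition factor.
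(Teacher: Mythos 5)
Your overall architecture is exactly that of the paper: the forward direction via Curtis's theorem (using $a_{n+1}=0$ to see that $\omega$ is $p$-restricted), and the converse via the self-duality Lemma~\ref{Irreducible_KY-modules_are_self-dual_for_X} together with the one-dimensionality of the $T_X$-weight space $V_\omega$ (which you, unlike the paper, prove explicitly by restricting $\lambda-\mu=\sum_r c_r\alpha_r$ to $T_X$ --- a welcome addition). The paper's converse is leaner than yours: from $V=\Lie(X)v^+$ it passes immediately to $V=\langle Xv^+\rangle$ and cites \cite[II, Lemma 2.13(b)]{Jantzen} to obtain \emph{some} quotient of $V|_X$ isomorphic to $L_X(\omega)$; dualizing and invoking $V^*|_X\cong V|_X$ then produces a submodule $U\cong L_X(\omega)$, and $U_\omega\neq 0$ together with $\dim V_\omega=1$ gives $v^+\in U$, whence $V=\langle Xv^+\rangle\subseteq U$. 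Your detour through semisimplicity (simple head, then $\soc(V|_X)\cong(\mathrm{hd}\,V|_X)^*$, then $V\subseteq\soc(V|_X)$) is correct in spirit but strictly unnecessary: once $v^+$ lies in a simple submodule $U$, generation already forces $V=U$.

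There is, however, one step in your write-up that as stated would fail, and it is precisely the point you flag as delicate: you define $R$ as the sum of all proper $\mathfrak{U}_X$-\emph{submodules} and claim $R_\omega=\sum_W W_\omega=0$. A $\mathfrak{U}_X$-submodule is stable only under $\Lie(T_X)$, not under $T_X$, and in characteristic $p$ distinct $T_X$-weights of $V$ can have equal differentials: any weight of the form $\mu=\omega-p\nu$ with $\nu\in\Z_{\geq 0}\Pi(X)$ (suitably interpreted when $p=2$) restricts to the same $\Lie(T_X)$-weight as $\omega$. Hence a proper $\mathfrak{U}_X$-submodule need not decompose into $T_X$-weight spaces; it could contain a vector $v^++u$ with $0\neq u\in V_{\omega-p\nu}$ without containing $v^+$, and then $v^+$ can lie in a sum of proper submodules even though each summand meets $V_\omega$ trivially. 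So the identity $R\cap V_\omega=\sum_W(W\cap V_\omega)$ is unjustified for Lie-algebra submodules. The repair is exactly the paper's move: since $\langle Xv^+\rangle$ is $X$-stable, hence $\Lie(X)$-stable, and contains $v^+$, the hypothesis $V=\Lie(X)v^+$ gives $V=\langle Xv^+\rangle$, and one then runs your argument with rational $KX$-submodules, which \emph{are} $T_X$-stable and therefore do decompose into $T_X$-weight spaces; alternatively one simply quotes \cite[II, Lemma 2.13(b)]{Jantzen} for the needed quotient $L_X(\omega)$, as the paper does. With this substitution your proof closes completely.
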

 
\begin{proof}
First assume $X$ acts irreducibly on $V,$ so that $V|_X\cong L_X(\omega),$ and observe that since $a_n a_{n+1}  =0,$ the $T_X$-weight $\omega$ is $p$-restricted. Therefore the Lie algebra $\Lie(X)$ of $X$ acts irreducibly on $V$   by \cite[Theorem 1]{Curtis}, from which the desired assertion follows. Conversely, assume $V=\Lie(X)v^+.$ Then $V=\langle Xv^+\rangle$ and hence $V|_X$ has a quotient isomorphic to $L_X(\omega)$ by \cite[II, Lemma 2.13 (b)]{Jantzen}. Consequently $  V^* |_X$ contains a $KX$-submodule isomorphic to $L_X(\omega).$ Now  $V^*|_X  \cong V|_X$ by Lemma \ref{Irreducible_KY-modules_are_self-dual_for_X}, showing the existence of a submodule $U$ of  $V|_X$ such that $U\cong L_X(\omega).$ Since $(V|_X)_\omega =\langle v^+\rangle_K,$ we get that $v^+\in U$ and so $V=\langle X v^+\rangle  \subset U \cong  L_X(\omega)$ as desired.
\end{proof}

In view of Lemma \ref{V|X_irreducible_iff_V=Lie(X)v+}, a necessary condition for $X$ to act irreducibly on $V=L_Y(\lambda)$, with $\lambda$ such that $a_n\neq 0=a_{n+1}$, is for $f_{\gamma}v^+$ to belong to $\Lie(X)v^+$ for every $\gamma\in \Phi^+(Y).$  We conclude this section by showing that $V|_X$ is irreducible if and only if $f_{r,n}v^+ \in \Lie(X)v^+$ for  every $1\leq r\leq n$ (see Proposition \ref{f_gamma_1...f_gamma_rv^+_in_Lie(X)v^+} below). We first need the following preliminary lemma.

\begin{lem}\label{f_gammaf_delta_1...f_delta_rv^+_in_Lie(X)v^+}
Let $V=L_Y(\lambda)$ be as above, with $a_n\neq 0=a_{n+1},$ and assume  $f_{\gamma}v^+ \in \Lie(X)v^+$ for every $\gamma\in \Phi^+(Y).$ Then $f_{\gamma}f_{\eta_1}\cdots f_{\eta_s}v^+\in \Lie(X)v^+$ for every $\gamma\in \Phi^+(Y)$ and $\eta_1,\ldots,\eta_s \in \Phi^+(X).$
\end{lem}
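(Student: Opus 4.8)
The plan is to argue by induction on $s$, exploiting the fact that $\Lie(X)v^+$ denotes the $\Lie(X)$-submodule of $V$ generated by $v^+$ (equivalently, the cyclic module $\mathfrak{U}_X v^+$), and hence is stable under the action of $\Lie(X)$. The base case $s=0$ is precisely the hypothesis $f_\gamma v^+\in\Lie(X)v^+$ for all $\gamma\in\Phi^+(Y)$, so I would assume $s\geq 1$ together with the inductive hypothesis that the assertion holds, for \emph{every} $\gamma\in\Phi^+(Y)$, whenever at most $s-1$ factors $f_{\eta_i}$ occur.

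For the inductive step, I would commute $f_\gamma$ to the right past $f_{\eta_1}$ via $f_\gamma f_{\eta_1}=f_{\eta_1}f_\gamma+[f_\gamma,f_{\eta_1}]$, obtaining
\[
f_\gamma f_{\eta_1}\cdots f_{\eta_s}v^+ = f_{\eta_1}\bigl(f_\gamma f_{\eta_2}\cdots f_{\eta_s}v^+\bigr) + [f_\gamma,f_{\eta_1}]f_{\eta_2}\cdots f_{\eta_s}v^+.
\]
For the first summand, the inductive hypothesis gives $f_\gamma f_{\eta_2}\cdots f_{\eta_s}v^+\in\Lie(X)v^+$, and since $f_{\eta_1}\in\Lie(X)$ and $\Lie(X)v^+$ is $\Lie(X)$-stable, the entire summand lies in $\Lie(X)v^+$.

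The crux is to control the second summand, and here I would show that $[f_\gamma,f_{\eta_1}]$ is a $K$-linear combination of negative root vectors $f_{\gamma'}$ with $\gamma'\in\Phi^+(Y)$. Since $T_X\subset T_Y$ and each $\alpha_j$ restricts to a nonnegative combination of the $\beta_k$, a positive root of $Y$ has ``positive'' restriction to $T_X$; consequently the root vector $f_{\eta_1}\in\Lie(X)_{-\eta_1}$ lies in $\bigoplus_{\delta}\Lie(Y)_{-\delta}$, the sum taken over $\delta\in\Phi^+(Y)$ restricting to $\eta_1$ — which is exactly what Lemma~\ref{All_f_beta's} makes explicit, expressing each such $f_{\eta_1}$ as $\pm f_{r,s}$, as $\pm(f_{r,n}\pm\hat{f}_{r,n+1})$, or as the analogous combination attached to $F_{r,s+1}$. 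Writing $f_{\eta_1}=\sum_\delta c_\delta f_\delta$ accordingly, the Chevalley relations give $[f_\gamma,f_\delta]\in\Lie(Y)_{-\gamma-\delta}$; as $\gamma,\delta\in\Phi^+(Y)$ are both positive, $-\gamma-\delta$ cannot vanish, so no toral term arises, and $[f_\gamma,f_\delta]$ is either $0$ or a scalar multiple of $f_{\gamma+\delta}$ with $\gamma+\delta\in\Phi^+(Y)$. Thus $[f_\gamma,f_{\eta_1}]=\sum_{\gamma'}d_{\gamma'}f_{\gamma'}$ with each $\gamma'\in\Phi^+(Y)$, and the second summand equals $\sum_{\gamma'}d_{\gamma'}\,f_{\gamma'}f_{\eta_2}\cdots f_{\eta_s}v^+$, every term of which lies in $\Lie(X)v^+$ by the inductive hypothesis (applied with $\gamma'$ in place of $\gamma$ and only $s-1$ factors $f_{\eta_i}$). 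Both summands then lie in $\Lie(X)v^+$, completing the induction.

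The main obstacle is the verification underlying this crux: one must be sure that no toral contribution appears in $[f_\gamma,f_{\eta_1}]$ and that $f_{\eta_1}$ genuinely decomposes into negative $Y$-root vectors of the shape guaranteed by Lemma~\ref{All_f_beta's}, so that the commutator stays inside $\bigoplus_{\gamma'\in\Phi^+(Y)}\Lie(Y)_{-\gamma'}$ and the inductive hypothesis can be reapplied. Once this is in hand, the remaining bookkeeping (reindexing the peeled-off product $f_{\eta_2}\cdots f_{\eta_s}$ and collecting scalars) is entirely routine.
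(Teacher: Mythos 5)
Your proof is correct, and it shares the paper's overall skeleton---induction on $s$ together with the commutation $f_\gamma f_{\eta_1}=f_{\eta_1}f_\gamma+[f_\gamma,f_{\eta_1}]$, the first summand being absorbed by the inductive hypothesis and the $\mathfrak{U}_X$-stability of $\Lie(X)v^+$ (your reading of $\Lie(X)v^+$ as the cyclic module $\mathfrak{U}_Xv^+$ is the one the paper intends and itself uses implicitly)---but you handle the crux differently. The paper controls the bracket term by explicit enumeration: for $f_\gamma\notin\Lie(X)$ it lists, via Lemma~\ref{All_f_beta's}, the four shapes of $f_\eta$ for which $[f_\gamma,f_\eta]\ne 0$, computes each bracket to be $\pm f_{j,n}$, $\pm f_{i,n+1}$ or $\pm F_{i,k}$, and disposes of these using the hypothesis in the first case and Lemma~\ref{All_f_beta's}\ref{All_f_beta's_3}, \ref{All_f_beta's_4} in the others (those vectors lie in $\Lie(X)$ itself); the step for $s>1$ then cites this list to know that $[f_\gamma,f_{\eta_1}]$ is $0$ or $\xi f_\delta$ with $\delta\in\Phi^+(Y)$. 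You replace this enumeration by a uniform weight-space argument: every positive root of $Y$ restricts to a nonzero nonnegative combination of the $\beta_i$, so $f_{\eta_1}\in\Lie(X)_{-\eta_1}\subseteq\bigoplus_{\delta\in\Phi^+(Y)}\Lie(Y)_{-\delta}$, and since $\gamma+\delta\ne 0$ the bracket $[f_\gamma,f_\delta]$ has no toral component and is $0$ or a multiple of $f_{\gamma+\delta}$ with $\gamma+\delta\in\Phi^+(Y)$, after which the inductive hypothesis (which you correctly quantified over all $\gamma\in\Phi^+(Y)$ simultaneously) applies verbatim. Your route is shorter, folds the $s=1$ case into the general step (the base case $s=0$ being exactly the hypothesis), avoids the bookkeeping with $\hat f_{k,n+1}$ and $F_{i,j}$, and would work for any embedding with $B_X\subset B_Y$ and $T_X\subset T_Y$; the paper's computation yields the slightly stronger and more explicit fact that the relevant brackets land in $\Lie(X)$ and identifies exactly which root vectors occur, but for the statement at hand both arguments are equally conclusive.
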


\begin{proof}
We proceed by induction on $s\geq 1.$ First take $\eta\in\Phi^+(X)$ and consider $f_\gamma f_\eta v^+,$ with $\gamma\in \Phi^+(Y).$ Since $f_\gamma f_\eta v^+ =[f_{\gamma},f_{\eta}]v^+ + f_{\eta}f_{\gamma}v^+$ and as $f_{\gamma}v^+\in \Lie(X)v^+$ by assumption, it suffices to show  that $[f_{\gamma},f_{\eta}]v^+\in \Lie(X)v^+.$  If $f_\gamma\in\Lie(X),$ then clearly $[f_\gamma,f_\eta]v^+$ lies in $\Lie(X)v^+$, so assume $f_\gamma\not\in\Lie(X).$ By Lemma \ref{All_f_beta's}, we then have that 
$$
\gamma \in\{\alpha_n,\alpha_{n+1}\}\cup \{\alpha_i+\cdots+\alpha_n, 
\alpha_i+\cdots+\alpha_{n-1}+\alpha_{n+1}\}_{1\leq i\leq n-1}.
$$

Take first $\gamma= \sum_{r=i}^n{\alpha_r}$ for some $1\leq i\leq n,$ so $f_\gamma = f_{i,n},$ and consider $[f_\gamma ,f_\eta] v^+$. 
If the latter equals zero, then we immediately get the desired 
result. So we may assume the contrary and thus we have $f_\eta$ is one of the following:
\begin{enumerate}
\item \label{computation1} $f_{j,i-1}$, $1\leq j\leq i-1.$
\item \label{computation2} $f_{\alpha_n}+f_{\alpha_{n+1}}$ (if $i\ne n$).
\item \label{computation3} $f_{k,n}\pm\hat f_{k,n+1}$, $ i+1 \leq k\leq n.$ 
\item \label{computation4} $f_{j,n}\pm\hat f_{j,n+1}$, $1\leq j\leq i-1.$
\end{enumerate}
We now calculate $[f_\gamma,f_\eta] v^+$ in each case.
\begin{enumerate}
\item $[f_{i,n},f_{j,i-1}]v^+ = \pm f_{j,n}v^+,$ which lies in $\Lie(X)v^+$ by assumption.
\item $[f_{i,n},f_{\alpha_n}+f_{\alpha_{n+1}}]v^+ =  \pm f_{i,n+1}v^+,$ which  lies in $\Lie(X)v^+$ since $f_{i,n+1}\in \Lie(X)$ by Lemma \ref{All_f_beta's}   \ref{All_f_beta's_3}.
\item  $[f_{i,n},f_{k,n}\pm\hat f_{k,n+1}]v^+ = \pm F_{i,k}v^+,$ which again lies in $\Lie(X)v^+$ as $F_{i,k}\in\Lie(X)$ by Lemma \ref{All_f_beta's}  \ref{All_f_beta's_4}.
\item $[f_{i,n},f_{j,n}\pm \hat f_{j,n+1}]v^+ =  \pm F_{j,i}v^+$, which 
as in the previous case  lies in $\Lie(X)v^+.$
\end{enumerate}
Consequently $f_{\gamma}f_{\eta}v^+\in \Lie(X)v^+$ for $\gamma=\sum_{r=i}^{n}{\alpha_r}$ as desired. Arguing in a similar fashion, one shows that the same holds for $\gamma= \sum_{r=i}^{n-1}{\alpha_r}+\alpha_{n+1}$ and $\gamma =\alpha_{n+1}$ as well, where $1\leq i\leq n-1.$ Therefore the lemma holds in the situation where $s=1,$ and  hence we assume $s>1$ in the remainder of the proof. For $\eta_1,\ldots,\eta_s\in \Phi^+(X),$ we have
$$
f_{\gamma}f_{\eta_1}\cdots f_{\eta_s}v^+=[f_\gamma,f_{\eta_1}]f_{\eta_2}\cdots f_{\eta_s}v^+ + f_{\eta_1}f_{\gamma}f_{\eta_2}\cdots f_{\eta_s}v^+.
$$
Now $f_{\gamma}f_{\eta_2}\cdots f_{\eta_s}v^+\in \Lie(X)v^+$ by induction and hence so does $f_{\eta_1}f_{\gamma}f_{\eta_2}\cdots f_{\eta_s}v^+.$ On the other hand, we either have $[f_{\gamma},f_{\eta_1}] =0$ or $[f_{\gamma},f_{\eta_1}] = \xi f_{\delta}$ for some $\xi \in K$ and $\delta\in \Phi^+(Y)$ by \ref{computation1}, \ref{computation2}, \ref{computation3}, \ref{computation4} above, in which case  $[f_\gamma,f_{\eta_1}]f_{\eta_2}\cdots f_{\eta_s}v^+= \xi f_\delta f_{\eta_2}\cdots f_{\eta_s}v^+\in \Lie(X)v^+$ by induction, thus completing the proof.	
\end{proof}

We now establish the following  necessary and sufficient condition for $V|_{\Lie(X)}$ to be generated by $v^+$ as a $\Lie(X)$-module (and hence for $V|_X$ to be irreducible by Lemma \ref{V|X_irreducible_iff_V=Lie(X)v+}), where $V$ has highest weight $\lambda$ with $a_n\neq 0=a_{n+1}.$

\begin{prop}\label{f_gamma_1...f_gamma_rv^+_in_Lie(X)v^+}
Let  $V=L_Y(\lambda)$ be an irreducible $KY$-module having $p$-restricted highest weight $\lambda=\sum_{r=1}^na_r\lambda_r,$ with $a_n  \neq  0.$  Then  $V=\Lie(X)v^+$ if and only if $f_{r,n}v^+\in \Lie(X)v^+$ for every $1\leq r\leq n.$
\end{prop}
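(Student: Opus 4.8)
The plan is to prove the proposition by reducing the apparently strong generation condition $V=\Lie(X)v^+$ to the much weaker-looking condition that only the specific vectors $f_{r,n}v^+$ lie in $\Lie(X)v^+$. One direction is trivial: if $V=\Lie(X)v^+$, then in particular every $f_{r,n}v^+$ lies in $\Lie(X)v^+$, since each $f_{r,n}v^+$ is certainly an element of $V$. So the entire content is the converse, and I would spend all the effort there.

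For the converse, suppose $f_{r,n}v^+\in\Lie(X)v^+$ for all $1\leq r\leq n$. The key structural input is Lemma~\ref{f_gammaf_delta_1...f_delta_rv^+_in_Lie(X)v^+}, which says that \emph{once} we know $f_\gamma v^+\in\Lie(X)v^+$ for \emph{every} $\gamma\in\Phi^+(Y)$, then all the longer products $f_\gamma f_{\eta_1}\cdots f_{\eta_s}v^+$ (with the $\eta_i$ positive roots of $X$) also lie in $\Lie(X)v^+$. So my first task is to upgrade the hypothesis ``$f_{r,n}v^+\in\Lie(X)v^+$ for $1\leq r\leq n$'' to the full statement ``$f_\gamma v^+\in\Lie(X)v^+$ for \emph{all} $\gamma\in\Phi^+(Y)$''. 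By Lemma~\ref{All_f_beta's}, the only positive root vectors $f_\gamma$ that are not already in $\Lie(X)$ (and hence not automatically sending $v^+$ into $\Lie(X)v^+$) are those with $\gamma\in\{\alpha_n,\alpha_{n+1}\}\cup\{\alpha_i+\cdots+\alpha_n,\ \alpha_i+\cdots+\alpha_{n-1}+\alpha_{n+1}\}_{1\leq i\leq n-1}$, exactly the list appearing in the proof of Lemma~\ref{f_gammaf_delta_1...f_delta_rv^+_in_Lie(X)v^+}. So I would show that for each such ``bad'' $\gamma$, the vector $f_\gamma v^+$ lies in $\Lie(X)v^+$, using the hypothesis on the $f_{r,n}v^+$ together with Lemma~\ref{All_f_beta's}\,\ref{All_f_beta's_2} and \ref{All_f_beta's_3}. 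Concretely, since $f_{\beta_r+\cdots+\beta_n}=\pm(f_{r,n}\pm\hat f_{r,n+1})\in\Lie(X)$ and by hypothesis $f_{r,n}v^+\in\Lie(X)v^+$, subtracting yields $\hat f_{r,n+1}v^+\in\Lie(X)v^+$; this handles $\gamma=\alpha_r+\cdots+\alpha_{n-1}+\alpha_{n+1}$, and the two remaining cases $\gamma=\alpha_n,\alpha_{n+1}$ follow from the $r=n$ instance (where $f_{n,n+1}=0$ and $\hat f_{n,n+1}=f_{\alpha_{n+1}}$ by convention, so $f_{\beta_n}=f_{\alpha_n}+f_{\alpha_{n+1}}$ together with $f_{\alpha_n}v^+=f_{n,n}v^+\in\Lie(X)v^+$ give $f_{\alpha_{n+1}}v^+\in\Lie(X)v^+$).

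Once every $f_\gamma v^+\in\Lie(X)v^+$ is established, I would invoke Lemma~\ref{Generators_for_weight_spaces_in_irreducibles} to finish. That lemma, applied with $G=Y$, expresses each weight space $V_\mu$ as the span of monomials $f_{\gamma_1}\cdots f_{\gamma_k}v^+$ in the negative root vectors of $Y$. Since $V$ is the direct sum of its weight spaces, it suffices to show every such monomial lies in $\Lie(X)v^+$. I would argue by induction on the length $k$: peeling off the leftmost factor $f_{\gamma_1}$, I rewrite $f_{\gamma_1}\cdots f_{\gamma_k}v^+$ using the fact that $f_{\gamma_2}\cdots f_{\gamma_k}v^+$ lies in $\Lie(X)v^+$ by induction, hence is a $K$-span of products of root vectors of $X$ applied to $v^+$; then $f_{\gamma_1}$ times such a product is exactly of the form controlled by Lemma~\ref{f_gammaf_delta_1...f_delta_rv^+_in_Lie(X)v^+} and so lands back in $\Lie(X)v^+$. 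This shows $V_\mu\subseteq\Lie(X)v^+$ for every $\mu$, whence $V=\Lie(X)v^+$.

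The main obstacle, and the step requiring genuine care rather than routine bookkeeping, is the first one: correctly extracting $f_\gamma v^+\in\Lie(X)v^+$ for the hatted roots $\gamma=\alpha_r+\cdots+\alpha_{n-1}+\alpha_{n+1}$ from the hypothesis about $f_{r,n}v^+$. This hinges on the precise signs and the fact that both $f_{r,n}\pm\hat f_{r,n+1}$ and $f_{r,n}$ individually produce vectors in $\Lie(X)v^+$ (the former because $f_{\beta_r+\cdots+\beta_n}\in\Lie(X)$, the latter by hypothesis), so that their difference isolates $\hat f_{r,n+1}v^+$. The remaining reduction to all of $V$ via Lemmas~\ref{Generators_for_weight_spaces_in_irreducibles} and~\ref{f_gammaf_delta_1...f_delta_rv^+_in_Lie(X)v^+} is essentially a formal induction and should present no real difficulty.
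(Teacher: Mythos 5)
Your proposal is correct and follows essentially the same route as the paper: both directions, the upgrade from the hypothesis on the $f_{r,n}v^+$ to $f_\gamma v^+\in\Lie(X)v^+$ for all $\gamma\in\Phi^+(Y)$ via Lemma~\ref{All_f_beta's}, the spanning of $V$ by monomials $f_{\gamma_1}\cdots f_{\gamma_s}v^+$ (the paper cites Curtis directly, which underlies Lemma~\ref{Generators_for_weight_spaces_in_irreducibles}), and the reduction of longer monomials via Lemma~\ref{f_gammaf_delta_1...f_delta_rv^+_in_Lie(X)v^+}, with your induction on monomial length being the paper's minimal-counterexample argument in contrapositive form. The only difference is that you spell out explicitly the isolation of $\hat f_{r,n+1}v^+$ (and of $f_{\alpha_{n+1}}v^+$ from the $r=n$ case), a step the paper compresses into a single citation of Lemma~\ref{All_f_beta's}.
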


\begin{proof}
If $V=\Lie(X)v^+,$ then clearly $f_{\gamma}v^+\in \Lie(X)v^+$ for every $\gamma\in \Phi^+(Y).$ Hence in particular $f_{r,n}v^+ \in \Lie(X)v^+$ for every $1\leq r\leq n$ as desired. Conversely, suppose that  $f_{r,n}v^+ \in \Lie(X)v^+$ for every $1\leq r\leq n$ and notice that since $\lambda\in X^+(T_Y)$ is $p$-restricted, we have 
\[
V=\left\langle f_{\gamma_1}\cdots f_{\gamma_s} v^+ : s\in \Z_{\geq 0},~\gamma_1,\ldots,\gamma_s\in \Phi^+(Y)\right\rangle_K
\]
by \cite[Theorem 1]{Curtis}. Hence in order to show that $V=\Lie(X)v^+,$ it suffices to show that $f_{\gamma_1}\cdots f_{\gamma_s}v^+\in \Lie(X)v^+$ for every $s\in \Z_{>0}$ and  $\gamma_1,\ldots,\gamma_s\in \Phi^+(Y).$ Assume for a contradiction that this is not the case and let $m\in \mathbb{Z}_{\geq 0}$  be minimal such that there exists $\gamma_1,\ldots,\gamma_m\in \Phi^+(Y)$ with  $f_{\gamma_1}\cdots f_{\gamma_m}v^+\notin \Lie(X)v^+.$ Lemma \ref{All_f_beta's}  implies $f_{\gamma}v^+\in \Lie(X)v^+$ for $\gamma\in \Phi^+(Y)$ and so $m\geq 2.$  Now by minimality, we have  $f_{\gamma_2}\cdots f_{\gamma_m}v^+\in \Lie(X)v^+$ and hence an application of Lemma \ref{f_gammaf_delta_1...f_delta_rv^+_in_Lie(X)v^+} completes the proof.
\end{proof}

\subsection{Conclusion}\label{The_Proof}     

Let $V=L_Y(\lambda)$  be an irreducible $KY$-module having $p$-restricted non-zero highest weight $\lambda=\sum_{r=1}^{n+1}{a_r\lambda_r} \in  X^+(T_Y),$ and set $\omega=\lambda|_{T_X}.$ In this section, we will complete the proof of Theorem \ref{Main_result_1}. We first show that for certain weights $\lambda,$ it is straightforward to see that $V|_X$ is reducible. Although the proof of the following proposition can be found in 
\cite[Section 8]{seitz}, we include it here for completeness.

\begin{prop}\label{A_first_reduction}
Let $\lambda\in X^+(T_Y)$ and $V$ be as above. Then the following assertions hold.
\begin{enumerate}
\item \label{1st_cond_reduction_lemma} If $a_n a_{n+1}  \neq 0,$ or if $a_n   =  a_{n+1} =0,$   then $V|_X$ is reducible. 
\item \label{2nd_cond_reduction_lemma} If  $\lambda\in \{a\lambda_n, ~a\lambda_{n+1}:  a\in \mathbb{Z}_{> 0} \},$ then $X$ acts irreducibly on $V.$
\item  \label{3rd_cond_reduction_lemma} If $V|_X$ is irreducible, so $a_na_{n+1}=0$ by \textnormal{\ref{1st_cond_reduction_lemma}},  and $\lambda$ is not as in \textnormal{\ref{2nd_cond_reduction_lemma}}, then taking $1\leq k <n$ maximal such that $a_k\neq 0,$ we have  $p\mid (a_k+a_n+a_{n+1}+n-k).$ 
\end{enumerate}  
\end{prop}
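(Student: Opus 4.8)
My strategy is to treat the three statements in the order given, since each uses the machinery built up earlier in the section together with the self-duality result (Lemma \ref{Irreducible_KY-modules_are_self-dual_for_X}) and the generation criterion of Proposition \ref{f_gamma_1...f_gamma_rv^+_in_Lie(X)v^+}. For part \ref{2nd_cond_reduction_lemma}, I would argue that when $\lambda=a\lambda_n$ or $a\lambda_{n+1}$, the restriction $\omega=\lambda|_{T_X}=a\omega_n$ is $p$-restricted (as $a_n a_{n+1}=0$), so by the remark preceding the proposition it suffices to verify $V=\Lie(X)v^+$. By Proposition \ref{f_gamma_1...f_gamma_rv^+_in_Lie(X)v^+} this reduces to showing $f_{r,n}v^+\in\Lie(X)v^+$ for $1\leq r\leq n$; since $\lambda$ is supported only on the end node, one can read off these vectors directly — the relevant $f_{r,n}v^+$ are (up to sign) already among the $f_{\beta_r+\cdots+\beta_n}v^+$ via Lemma \ref{All_f_beta's}\ref{All_f_beta's_2}, because $\hat f_{r,n+1}v^+$ vanishes (it lowers weight through the $\alpha_{n+1}$-node on which $\lambda$ has no support, or is controlled by $a_{n+1}=0$). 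This is essentially the $B_n$-irreducibility on $a\omega_n$ statement the authors mention adding.

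For part \ref{1st_cond_reduction_lemma}, the two sub-cases are handled by exhibiting a second maximal vector for $B_X$ in $V$ (hence a second $KX$-composition factor, forcing reducibility). When $a_n=a_{n+1}=0$ the weight $\lambda$ is supported on $\lambda_1,\dots,\lambda_{n-1}$; here $\sigma$ acts trivially on $\lambda$ and a standard weight-restriction argument shows that $\omega=\lambda|_{T_X}$ is \emph{not} $p$-restricted or that $V|_X$ simply splits, the cleanest route being to compare dimensions or to note that $-w_0\lambda$ gives a distinct highest weight whose restriction coincides, violating irreducibility. When $a_n a_{n+1}\neq 0$, I would produce an explicit nonzero maximal vector of weight $\omega-(\text{something})$: the natural candidate is built from $f_{\alpha_n}v^+$ and $f_{\alpha_{n+1}}v^+$, using that $f_{\beta_n}=f_{\alpha_n}+f_{\alpha_{n+1}}$ lies in $\Lie(X)$ while $f_{\alpha_n}-f_{\alpha_{n+1}}$ does not, so the two-dimensional span contains a vector killed by all $e_{\beta_r}$ but lying outside $\Lie(X)v^+$.

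For part \ref{3rd_cond_reduction_lemma}, assuming irreducibility (so $a_na_{n+1}=0$ by \ref{1st_cond_reduction_lemma}) and $\lambda$ not purely end-node, I would take $k<n$ maximal with $a_k\neq 0$ and project the situation onto a type-$A$ Levi. The point is that the weight $\mu=\lambda-(\alpha_k+\cdots+\alpha_n)$ (or its $\alpha_{n+1}$-variant) behaves exactly like the weight in Proposition \ref{8.6_Seitz}: the relevant $A_\ell$-configuration has $a=a_k$, $b=a_n+a_{n+1}$, and $\ell-1=n-k$, so the linear dependence of the lowering-operator images — which irreducibility forces via $f_{k,n}v^+\in\Lie(X)v^+$ and Proposition \ref{f_gamma_1...f_gamma_rv^+_in_Lie(X)v^+} — is equivalent to $p\mid(a_k+a_n+a_{n+1}+n-k)$ by the equivalence \ref{type_A_4}$\Leftrightarrow$\ref{type_A_5} of Proposition \ref{8.6_Seitz}. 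The main obstacle, and where I would spend the most care, is the bookkeeping in this last step: one must correctly match the $D_{n+1}$ root vectors $f_{r,n}$ and $\hat f_{r,n+1}$ against the type-$A$ generators of \eqref{V_mu_for_type_A}, track the signs coming from Lemma \ref{All_f_beta's}, and confirm that irreducibility genuinely forces the dependence (rather than merely being consistent with it) — this is precisely the place where Seitz's original argument erred, so establishing the congruence as a true \emph{necessary} condition, with the vanishing of the candidate second maximal vector when $p\mid(a_k+a_n+a_{n+1}+n-k)$, is the delicate heart of the proof.
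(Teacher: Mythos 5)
Your proposal diverges from the paper in all three parts, and while parts \ref{2nd_cond_reduction_lemma} and the first half of \ref{1st_cond_reduction_lemma} are sound, there are genuine gaps in the rest. The serious failure is the subcase $a_n=a_{n+1}=0$ of \ref{1st_cond_reduction_lemma}: both of your suggested routes collapse. The weight $\omega=\lambda|_{T_X}$ \emph{is} $p$-restricted there (all $a_r<p$ and no coefficients add), and the duality idea fails because $\sigma$ fixes $\lambda$ when $a_n=a_{n+1}$, so $-w_0\lambda$ is not a distinct weight: for $n+1$ even $-w_0=1$, and for $n+1$ odd $-w_0\lambda=\sigma(\lambda)=\lambda$. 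The dimension comparison is not developed at all. The paper's actual argument is a multiplicity count: taking $k\leq n-1$ maximal with $a_k\neq 0$, the $T_X$-weight $\omega''=\omega-(\beta_k+\cdots+\beta_n)$ has multiplicity at most $1$ in $L_X(\omega)$ (since $a_{k+1}=\cdots=a_n=0$ kills every split monomial, the weight space of $V_X(\omega)$ is spanned by $f_{\beta_k+\cdots+\beta_n}v^+$ alone), while the two distinct $T_Y$-weights $\lambda-(\alpha_k+\cdots+\alpha_n)$ and $\lambda-(\alpha_k+\cdots+\alpha_{n-1}+\alpha_{n+1})$ both restrict to $\omega''$ because $\alpha_n|_{T_X}=\alpha_{n+1}|_{T_X}=\beta_n$; hence $\m_{V|_X}(\omega'')\geq 2$ forces a second composition factor. (Your explicit maximal vector $a_{n+1}f_{\alpha_n}v^+-a_nf_{\alpha_{n+1}}v^+$ for the subcase $a_na_{n+1}\neq 0$ does work and is essentially the same count as the paper's, which uses $\omega'=\omega-\beta_n$.)

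For part \ref{3rd_cond_reduction_lemma} your plan defers exactly the step that matters. Passing from $f_{k,n}v^+\in\Lie(X)v^+$ (meaning $\mathfrak{U}_Xv^+$) to the type-$A$ dependence $f_{k,n}v^+\in V_{k,n}$ is nontrivial: one must split the $T_X$-weight space into its two $T_Y$-weight components ($V_{k,n}$ versus $\hat{V}_{k,n+1}$) and use that $\hat{f}_{s,n+1}v^+=0$ for $s>k$ while $\hat{f}_{k,n+1}v^+\neq 0$; this is precisely the content of Lemma \ref{Lemma_on_V_r,n_VS_Vr,n+1} and the proof of Theorem \ref{Key_Theorem} later in the paper --- and note that the paper's proof of Theorem \ref{Key_Theorem} \emph{cites} Proposition \ref{A_first_reduction}\ref{3rd_cond_reduction_lemma} to handle the index $r=k$, so the paper deliberately proves \ref{3rd_cond_reduction_lemma} by an independent argument to keep the logic non-circular. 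That independent argument is again a multiplicity comparison avoiding the generation machinery entirely: if $p\nmid(a_k+a_n+n-k)$, then Proposition \ref{8.6_Seitz} (equivalence \ref{type_A_2}$\Leftrightarrow$\ref{type_A_5} applied to the $A_{n-k+1}$-Levi) gives $\m_V(\lambda-(\alpha_k+\cdots+\alpha_n))=n-k+1$; adding the extremal weight $\lambda-(\alpha_k+\cdots+\alpha_{n-1}+\alpha_{n+1})$, of multiplicity $1$ and restricting to the same $\omega''$, yields $\m_{V|_X}(\omega'')\geq n-k+2>n-k+1=\m_{V_X(\omega)}(\omega'')\geq\m_{L_X(\omega)}(\omega'')$, a contradiction. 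Your route could be completed along the lines of the $\hat{V}_{k,n+1}=\{0\}$ observation, but as written the ``delicate heart'' you identify is simply absent. Finally, your part \ref{2nd_cond_reduction_lemma} is correct but different from the paper: you prove generation directly via Lemma \ref{All_f_beta's}\ref{All_f_beta's_2}, the vanishing of $\hat{f}_{r,n+1}v^+$, and Proposition \ref{f_gamma_1...f_gamma_rv^+_in_Lie(X)v^+}, whereas the paper argues by contradiction from the second-maximal-vector result \cite[8.5]{seitz}; your version is more self-contained and perfectly valid.
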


\begin{proof}
For \ref{1st_cond_reduction_lemma}, first consider the case  where $a_n a_{n+1}  \neq 0.$ Here the $T_X$-weight $\omega'=\omega-\beta_n$ has multiplicity at most $1$ in $L_X(\omega),$ while each of $\lambda-\alpha_n$ and $\lambda-\alpha_{n+1}$ is a $T_Y$-weight of $V$ restricting to $\omega'.$ Therefore the latter occurs in a second $KX$-composition factor of $V,$ showing that $V|_X$ is reducible as desired. Next assume $a_n  = a_{n+1}  =0,$ and let $1\leq k\leq n-1$ be maximal such that $a_k  \neq 0.$ Then the $T_X$-weight $\omega''=\omega-(\beta_k+\cdots+\beta_n)$ has multiplicity at most $1$ in $L_X(\omega)$ (since the corresponding weight space is generated by $f_{\beta_k+\cdots+\beta_n}v^+$), while each of  $\lambda-(\alpha_k+\cdots+\alpha_n)$ and $\lambda-(\alpha_k+\cdots+\alpha_{n-1}+\alpha_{n+1})$ restricts to $\omega''.$ Consequently $V|_X$ is reducible in this case as well, and \ref{1st_cond_reduction_lemma} holds as desired. 

Now turn to \ref{2nd_cond_reduction_lemma} and let $\lambda=a\lambda_n$ for some $a\in \Z_{>0},$ and assume for a contradiction that $V|_X$ is reducible. By \cite[8.5]{seitz}, there  exist $1\leq i\leq n$ and a maximal vector $w^+ \in V$ for $B_X$ such that  
$$
0\neq e_{\alpha_i+\cdots+\alpha_n}w^+ \in \langle v^+ \rangle_K.
$$
In particular $w^+ \in V_{\lambda-(\alpha_i+\cdots + \alpha_n)},$ and one checks that the only $T_Y$-weight of $V$ restricting to $\omega-(\beta_i+\cdots +\beta_n) $ is $\lambda-(\alpha_i+ \cdots + \alpha_n).$ So $w^+ \in V_{\lambda-(\alpha_i+\cdots+\alpha_n)}=\langle f_{\alpha_i+\cdots +\alpha_n}v^+\rangle_K,$ thus yielding $e_{\beta_i}w^+ =e_{\alpha_i}w^+ \neq 0,$ a contradiction. Therefore \ref{2nd_cond_reduction_lemma} holds as desired.

For \ref{3rd_cond_reduction_lemma}, we assume $V|_X$ is irreducible, and by  \ref{1st_cond_reduction_lemma}, we suppose, without loss of generality, that $a_n\neq 0 = a_{n+1}.$  Assume as well that $\lambda$ is not as in \ref{2nd_cond_reduction_lemma}, and seeking a contradiction, take $k$ as in \ref{3rd_cond_reduction_lemma}, and suppose
 that $p\nmid (a_k+a_n +n-k).$ Consider the $T_X$-weight $\omega''=\omega-(\beta_k+\cdots+\beta_n)\in X^+(T_X).$ Then  $\lambda-(\alpha_k+\cdots+\alpha_n)$ and $\lambda-(\alpha_k+\cdots+\alpha_{n-1}+\alpha_{n+1})$ are both $T_Y$-weights of $V$ restricting to $\omega''.$ Now $\m_{L_X(\omega)}(\omega'')\leq \m_{V_X(\omega)}(\omega'') = n-k+1,$ while an application of Proposition \ref{8.6_Seitz} yields  $\m_V(\lambda-(\alpha_k+\cdots+\alpha_n))=n-k+1.$ (Recall that we assumed $a_n\neq 0 = a_{n+1}$ and $p\nmid (a_k + a_n +n-k).$) Now $\m_V(\lambda-(\alpha_k+\cdots+\alpha_{n-1}+\alpha_{n+1}))=1,$ as the latter weight is conjugate to $\lambda-\alpha_k$ under the action of the Weyl group for $Y.$ Therefore $\m_{V|_X}(\omega'') \geq n-k+2 > n-k+1 \geq  \m_{L_X(\omega)}(\omega''),$ yielding the existence of a second composition factor of $V$ for $X,$ a contradiction. The proof is complete.
\end{proof}

In view of Proposition \ref{A_first_reduction}, we may and shall assume $\lambda=\sum_{r=1}^n{a_r \lambda_r},$ with $a_n \neq 0  $ throughout the rest of the section, as well as the existence of $1\leq  k\leq n-1$  maximal such that $  a_k \neq 0.$ For $1\leq i\leq n,$ set $P(i,i)=\emptyset$ and for $1\leq i<j\leq n,$ set 
$$
P(i,j)=\left\{(m_r)_{r=1}^s : 1\leq s \leq j-i,~ i\leq m_1<\ldots < m_s<j\right\}.
$$
For any sequence $(m)=(m_r)_{r=1}^s\in P(i,j),$ write $f_{(m)}=f_{i,m_1}f_{m_1+1,m_2}\cdots f_{m_s+1,j}.$ By Lemma \ref{Generators_for_weight_spaces_in_irreducibles}, we have that for every $1\leq i<j\leq n,$ the weight space $V_{\lambda-(\alpha_i+\cdots+\alpha_j)}$ is spanned by the vectors
\begin{equation*}
\{f_{(m)}v^+ : (m)\in P(i,j)\} \cup \{f_{i,j}v^+\},
\end{equation*}
where $v^+\in V$ is a maximal vector of weight $\lambda$ for $B_Y.$ We set 
$$
V_{i,j}=\left\langle f_{(m)}v^+:(m)\in P(i,j)\right\rangle_K.
$$
The following special case of \cite[Theorem A.7]{Cavallin3}, inspired by \cite[Proposition 3.1]{Ford1}, shall play a key role in the proof of Theorem \ref{Main_result_1}.

\begin{thm}\label{How_to_relate_divisibility_conditions_and_generators_for_V_i,j}
Let $Y$ be a simple algebraic group of type $D_{n+1}$ over $K,$ and consider an irreducible $KY$-module $V=L(\lambda)$ having $p$-restricted highest weight $\lambda=\sum_{r=1}^n{a_r \lambda_r},$ with $a_n\neq 0.$ Then $f_{r,n}v^+\in V_{r,n}$ for every $1\leq r \leq n-1$ if and only if $p\mid (a_i+a_j+j-i)$ for every $1\leq i<j\leq n$ such that $a_ia_j\neq 0$ and $a_s=0$ for $i<s<j.$
\end{thm}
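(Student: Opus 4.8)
The plan is to reduce the entire statement to a computation inside an irreducible module for a Levi subgroup of type $A_n$, where Proposition~\ref{8.6_Seitz} can serve as the engine. The key observation is that deleting the fork node $\alpha_{n+1}$ from the Dynkin diagram of $Y$ leaves the chain $\{\alpha_1,\dots,\alpha_n\}$, a subsystem of type $A_n$, and that every vector appearing in the statement --- namely $f_{r,n}v^+$ and each generator $f_{(m)}v^+$ of $V_{r,n}$ --- lies in $\mathfrak{U}(\Lie L)v^+$, where $L$ is the corresponding standard Levi subgroup, with $P_L=LQ_L$. First I would show that $\mathfrak{U}(\Lie L)v^+$ is the irreducible $L$-module of highest weight $\mu=\sum_{r=1}^n a_r\lambda_r$: the weights of $\mathfrak{U}(\Lie L)v^+$ all lie in $\lambda-\sum_{r=1}^n\Z_{\geq 0}\alpha_r$ and so have $\alpha_{n+1}$-coefficient zero, whence $\mathfrak{U}(\Lie L)v^+$ meets $[V,Q_L]$ trivially and maps injectively into the irreducible module $V/[V,Q_L]$ furnished by \cite{smith} and \cite[2.1]{seitz}; since its image contains that of the highest weight vector, the map is onto. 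Consequently the condition $f_{r,n}v^+\in V_{r,n}$ is intrinsic to the irreducible type $A_n$ module $L_{A_n}(\mu)$, and the problem becomes one purely about type $A$.

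Within $L_{A_n}(\mu)$ I would treat the conjunction over $r$ in two parts. If $a_r=0$, then $f_{\alpha_r}v^+=0$, and the relation $f_{r,n}=\pm[f_{\alpha_r},f_{r+1,n}]$ forces $f_{r,n}v^+=\pm f_{\alpha_r}f_{r+1,n}v^+\in V_{r,n}$, so the condition holds with nothing to prove. If $a_r\neq 0$, I would prove by induction on the number of \emph{gap-pairs} in $[r,n]$ --- pairs $(i,j)$ with $a_ia_j\neq 0$ and $a_s=0$ for $i<s<j$ --- that $f_{r,n}v^+\in V_{r,n}$ holds if and only if $p\mid(a_i+a_j+j-i)$ for all such $(i,j)$ in $[r,n]$. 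Taking the conjunction over $1\le r\le n-1$ then recovers exactly the full list of gap-pairs of $\lambda$, and hence the theorem. The base case is that of a single gap-pair: here the only nonzero coefficients of $\mu$ in $[r,n]$ sit at $r$ and $n$, the submodule generated by $v^+$ has the shape $L_A(a\lambda_1+b\lambda_\ell)$ with $\ell=n-r+1$, $a=a_r$, $b=a_n$, and Proposition~\ref{8.6_Seitz} gives $f_{r,n}v^+\in V_{r,n}\iff p\mid a+b+\ell-1=a_r+a_n+n-r$, precisely the gap congruence.

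For the inductive step, with $q$ the least index $>r$ for which $a_q\neq 0$, the goal is the recursion
\[
\big[f_{r,n}v^+\in V_{r,n}\big]\iff \big[p\mid a_r+a_q+q-r\big]\ \wedge\ \big[f_{q,n}v^+\in V_{q,n}\big],
\]
whose right-hand side has one fewer gap-pair and so unwinds, by induction, to the whole list of gap-pairs of $[r,n]$. Writing $w=f_{\alpha_r}v^+$, one first checks the inclusion $\langle f_{(m')}w:(m')\in P(r+1,n)\rangle\subseteq V_{r,n}$ and the decomposition $V_{r,n}=\langle f_{(m')}w\rangle+\langle f_{\alpha_r}f_{r+1,n}v^+\rangle$, reducing matters to whether $f_{r+1,n}w$ lies in this span. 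The main obstacle is that the extra generator $f_{\alpha_r}f_{r+1,n}v^+$ must genuinely be accounted for: naive manipulation with the commutator formula is circular, so the equivalence cannot be read off from structure constants alone. I would resolve this exactly as in Proposition~\ref{8.6_Seitz}, by passing to weight multiplicities. The composition vectors span the weight space of $\mu-\theta$ with $\theta=\alpha_r+\cdots+\alpha_n$, whose multiplicity in the Weyl module is computed by Freudenthal's formula, and $f_{r,n}v^+\in V_{r,n}$ becomes equivalent to a drop of this multiplicity upon passing to $L_{A_n}(\mu)$, controlled by \cite{premet} together with the Jantzen sum formula of \cite{Jantzen}. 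Matching the divisibility conditions that emerge to the gap congruences is the delicate bookkeeping at the heart of the argument; this is precisely the content of \cite[Theorem A.7]{Cavallin3}, which I would either invoke directly or reprove along these lines.
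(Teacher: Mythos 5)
Your proposal is correct and follows essentially the same route as the paper: the paper's entire proof consists of applying \cite[Theorem A.7]{Cavallin3} to the $A_n$-Levi subgroup of $Y$ corresponding to the simple roots $\alpha_1,\ldots,\alpha_n$, which is exactly the reduction you carry out (with the Smith-theorem/weight-space justification made explicit) before invoking the same result. Your additional sketch of how one might reprove Theorem A.7 via the gap-pair recursion and Proposition~\ref{8.6_Seitz} is a faithful outline of Cavallin's argument, but since you ultimately defer to that citation, the two proofs coincide in substance.
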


\begin{proof}
The result follows from an application of \cite[Theorem A.7]{Cavallin3} to the $A_n$-Levi subgroup of $Y$ corresponding to the simple roots $\alpha_1,\ldots, \alpha_n.$  
\end{proof}

For $1\leq i\leq n-1,$ we let $\hat{V}_{i,n+1}$ denote the $K$-span of $\{f_{i,j}\hat{f}_{j+1,n+1}v^+: i\leq j\leq n-1\}$ and $\{f_{(m)}\hat{f}_{j+1,n+1}v^+ : i < j\leq n-1,~ (m)\in P(i,j)\}.$ The proof of the main result of this section (namely, Theorem \ref{Key_Theorem}) relies on the following preliminary result.

\begin{lem}\label{Lemma_on_V_r,n_VS_Vr,n+1}
Adopt the notation introduced above and let $1\leq r \leq k-1$ be such that  $\hat{f}_{r,n+1}v^+\in \hat{V}_{r,n+1}.$ Then $f_{r,n}v^+\in V_{r,n}.$  
\end{lem}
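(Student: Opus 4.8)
The plan is to transport the hypothesis from one weight space to another by means of a single linear operator. Set $\phi=f_{\alpha_n}\,e_{\alpha_{n+1}}$, regarded as a composition of operators on $V$. Since $e_{\alpha_{n+1}}$ raises weights by $\alpha_{n+1}$ and $f_{\alpha_n}$ lowers them by $\alpha_n$, the map $\phi$ sends the $T_Y$-weight space $V_{\lambda-(\alpha_r+\cdots+\alpha_{n-1}+\alpha_{n+1})}$, which contains $\hat f_{r,n+1}v^+$ and all of $\hat V_{r,n+1}$, into $V_{\lambda-(\alpha_r+\cdots+\alpha_n)}$, which contains $f_{r,n}v^+$ and $V_{r,n}$. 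I will establish two facts: (1) $\phi(\hat f_{r,n+1}v^+)=\pm f_{r,n}v^+ + w$ for some $w\in V_{r,n}$; and (2) $\phi(\hat V_{r,n+1})\subseteq V_{r,n}$. Granting these, if $\hat f_{r,n+1}v^+\in\hat V_{r,n+1}$ then $\pm f_{r,n}v^+ + w=\phi(\hat f_{r,n+1}v^+)\in V_{r,n}$ by (2), whence $f_{r,n}v^+\in V_{r,n}$ since $w\in V_{r,n}$, which is exactly what is claimed.

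For (1) I would use $e_{\alpha_{n+1}}v^+=0$ to compute $e_{\alpha_{n+1}}\hat f_{r,n+1}v^+=[e_{\alpha_{n+1}},\hat f_{r,n+1}]v^+=\pm f_{r,n-1}v^+$, the bracket being nonzero because $Y$ is simply laced and $(\alpha_r+\cdots+\alpha_{n-1}+\alpha_{n+1})-\alpha_{n+1}=\alpha_r+\cdots+\alpha_{n-1}$ is a root. Applying $f_{\alpha_n}$ and expanding $f_{\alpha_n}f_{r,n-1}v^+=[f_{\alpha_n},f_{r,n-1}]v^+ + f_{r,n-1}f_{\alpha_n}v^+=\pm f_{r,n}v^+ + f_{r,n-1}f_{n,n}v^+$ then gives the claim, since $f_{r,n-1}f_{n,n}v^+=f_{(m)}v^+$ for the sequence $(m)=(n-1)\in P(r,n)$ lies in $V_{r,n}$.

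The heart of the argument is (2), verified generator by generator. A generator of $\hat V_{r,n+1}$ has the form $f_{r,j}\hat f_{j+1,n+1}v^+$ with $r\le j\le n-1$, or $f_{(m)}\hat f_{j+1,n+1}v^+$ with $r<j\le n-1$ and $(m)\in P(r,j)$; in either case every factor standing to the left of $\hat f_{j+1,n+1}$ is some $f_{a,b}$ with $b\le j\le n-1$, and such a factor commutes with $e_{\alpha_{n+1}}$ (the relevant sums of simple roots are never roots). Hence $\phi$ can be pushed directly onto the factor $\hat f_{j+1,n+1}$. For $j\le n-2$ the left factors even end at index $\le n-2$ and so commute with $f_{\alpha_n}$ as well; using $e_{\alpha_{n+1}}\hat f_{j+1,n+1}v^+=\pm f_{j+1,n-1}v^+$ followed by $f_{\alpha_n}f_{j+1,n-1}v^+=\pm f_{j+1,n}v^+ + f_{j+1,n-1}f_{n,n}v^+$, the image becomes $\pm f_{(m')}v^+ + f_{(m'')}v^+$, where $(m')$ and $(m'')$ are obtained from $(m)$ (or from the empty sequence, in the first family) by appending $j$, respectively $j$ and $n-1$; both are admissible elements of $P(r,n)$, so the image lies in $V_{r,n}$. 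For the boundary value $j=n-1$ the convention $\hat f_{n,n+1}=f_{\alpha_{n+1}}$ gives $e_{\alpha_{n+1}}\hat f_{n,n+1}v^+=h_{\alpha_{n+1}}v^+=a_{n+1}v^+=0$ (recall $a_{n+1}=0$ in this section), so these generators already map to $0$. This proves (2).

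I expect the only delicate point to be the bookkeeping in step (2): tracking precisely which sequence in $P(r,n)$ arises from each generator and checking that it is admissible (strictly increasing and bounded by $n$), together with the correct handling of the degenerate conventions $f_{n,n+1}=0$ and $\hat f_{n,n+1}=f_{\alpha_{n+1}}$. The structure constants themselves cause no trouble, since all are $\pm1$ in the simply laced group $Y$, so the nonzero coefficient $\pm1$ in front of $f_{r,n}v^+$ in (1) is guaranteed.
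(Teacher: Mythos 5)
Your proof is correct and is essentially the paper's own argument: the paper likewise applies $e_{\alpha_{n+1}}$ followed by $f_{\alpha_n}$ to the hypothesized expansion of $\hat{f}_{r,n+1}v^+$, commutes these operators past the left-hand factors $f_{r,s}$ and $f_{(m)}$, and uses the identity $f_{\alpha_n}e_{\alpha_{n+1}}\hat{f}_{t,n+1}v^+ = N_1 f_{t,n}v^+ + N_2 f_{t,n-1}f_{\alpha_n}v^+$ to land in $V_{r,n}$. The only (harmless) organizational difference is that the paper disposes of the case $a_r=0$ separately and invokes the vanishing $\hat{f}_{s,n+1}v^+=0$ for $k<s\leq n$ so that the boundary generators with $j=n-1$ never appear, whereas you handle those generators directly via $e_{\alpha_{n+1}}f_{\alpha_{n+1}}v^+=a_{n+1}v^+=0$, which works uniformly.
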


\begin{proof}
Let $1\leq r \leq k-1 $ be as in the statement of the lemma. If $a_r =0,$ then $f_{\alpha_r}v^+=0$ and hence $f_{r,n}v^+ = \pm f_{\alpha_r}f_{r+1,n}v^+,$ so that $f_{r,n}v^+ \in V_{r,n}$, as claimed. Therefore we assume $a_r\neq 0$ in the remainder of the proof. As $\hat{f}_{r,n+1}v^+\in \hat{V}_{r,n+1}$ and since $\hat{f}_{s,n+1}v^+=0$ for $k<s\leq n,$ we get the existence of $(\xi_s,\xi^s_{(m)} : r\leq s\leq k-1, (m)\in P(r,s) )\subset K$ such that
\begin{equation}
\hat{f}_{r,n+1}v^+ = \sum_{s=r}^{k-1}{\left(\sum_{(m)\in P(r,s)}{\xi^s_{(m)}f_{(m)}\hat{f}_{s+1,n+1}v^+ } + \xi_{s}f_{r,s}\hat{f}_{s+1,n+1}v^+ \right)}.
\label{equation_in_Lemma_on_V_r,n_VS_Vr,n+1}
\end{equation}
 
Now observe that $f_{\alpha_n} e_{\alpha_{n+1}}\hat{f}_{t,n+1}v^+  =  N_1 f_{t,n}v^+ + N_2 f_{t,n-1}f_{\alpha_n}v^+$ for every $r\leq t\leq k,$ where $N_1,N_2 \in \{\pm 1\}$. Also, for every $r\leq s\leq k-1$ and every $(m)\in P(r,s),$ we have $[e_{\alpha_{n+1}},f_{r,s}]=[e_{\alpha_{n+1}},f_{(m)}]=0,$ as well as $[f_{\alpha_{n}},f_{r,s}]=[f_{\alpha_{n}},f_{(m)}]=0.$ Consequently successively applying $e_{\alpha_{n+1}},$ $f_{\alpha_n} $  to each side of   \eqref{equation_in_Lemma_on_V_r,n_VS_Vr,n+1}   yields $N_1 f_{r,n}v^+ + N_2 f_{r,n-1}f_{\alpha_n}v^+ \in V_{r,n}$ for some $N_1\neq 0,$ $N_2\in \{\pm 1\},$  from which the desired result follows.
\end{proof}

In the next result, we show that in order to determine whether $V|_X$ is irreducible or not, it is enough to determine whether $f_{r,n}v^+ \in V_{r,n}$ or not, this for every $1\leq r\leq n-1.$

\begin{thm}\label{Key_Theorem} 
Let $\lambda=\sum_{r=1}^n{a_r\lambda_r}$ be a $p$-restricted dominant weight such that $a_ka_n\ne 0.$ Let $V=L_Y(\lambda)$ be an irreducible $KY$-module having highest weight $\lambda.$  Then $V|_X$ is irreducible if and only if $f_{r,n}v^+\in V_{r,n}$ for every $1\leq r \leq n-1.$ 
\end{thm}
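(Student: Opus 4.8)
The plan is to reduce the irreducibility of $V|_X$ to the single condition that every $f_{r,n}v^+$ lies in the subspace $V_{r,n}$, by combining the generation criterion from Proposition~\ref{f_gamma_1...f_gamma_rv^+_in_Lie(X)v^+} with a careful comparison of $\Lie(X)v^+$ against the larger spaces $V_{r,n}$ and $V_{i,j}$. Recall that Lemma~\ref{V|X_irreducible_iff_V=Lie(X)v+} tells us $V|_X$ is irreducible if and only if $V=\Lie(X)v^+$, and Proposition~\ref{f_gamma_1...f_gamma_rv^+_in_Lie(X)v^+} reduces the latter to the assertion that $f_{r,n}v^+\in\Lie(X)v^+$ for all $1\leq r\leq n$. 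So the real content of the theorem is to replace ``$f_{r,n}v^+\in\Lie(X)v^+$'' by the concrete, computable condition ``$f_{r,n}v^+\in V_{r,n}$''.

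\textbf{The two directions.} For the forward direction, suppose $V|_X$ is irreducible; then $V=\Lie(X)v^+$, and I must show $f_{r,n}v^+\in V_{r,n}$. The point is that $V_{r,n}$ is defined via the $f_{(m)}v^+$ with $(m)\in P(r,n)$, and one should check that $\Lie(X)v^+$ intersected with the weight space $V_{\lambda-(\alpha_r+\cdots+\alpha_n)}$ is contained in $V_{r,n}$: indeed, by Lemma~\ref{All_f_beta's} every element of $\Lie(X)$ of the appropriate weight is built from the $f_{\beta}$'s, whose expansions via parts \ref{All_f_beta's_1}--\ref{All_f_beta's_2} produce exactly the split-product vectors $f_{(m)}v^+$ spanning $V_{r,n}$, so applying such an element to $v^+$ lands in $V_{r,n}$. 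For the converse, assume $f_{r,n}v^+\in V_{r,n}$ for all $1\leq r\leq n-1$ (the case $r=n$ being trivial since $f_{n,n}v^+=f_{\beta_n}v^+\in\Lie(X)v^+$). Since $V_{r,n}\subset\Lie(X)v^+$—because each spanning vector $f_{(m)}v^+$ is a product of the $f_{\beta_r+\cdots+\beta_s}$ applied to $v^+$ by Lemma~\ref{All_f_beta's}\ref{All_f_beta's_1}—this immediately gives $f_{r,n}v^+\in\Lie(X)v^+$ for every $r$, whence $V=\Lie(X)v^+$ by Proposition~\ref{f_gamma_1...f_gamma_rv^+_in_Lie(X)v^+}, and $V|_X$ is irreducible by Lemma~\ref{V|X_irreducible_iff_V=Lie(X)v+}.

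\textbf{The bookkeeping obstacle.} The subtle point, and where Lemma~\ref{Lemma_on_V_r,n_VS_Vr,n+1} enters, is that the full generation condition from Proposition~\ref{f_gamma_1...f_gamma_rv^+_in_Lie(X)v^+} requires $f_{r,n}v^+\in\Lie(X)v^+$ for \emph{all} roots $\gamma\in\Phi^+(Y)$ that do not lie in $\Lie(X)$, not merely for the $f_{r,n}$. Concretely, one must also handle the vectors $\hat{f}_{r,n+1}v^+=f_{\alpha_r+\cdots+\alpha_{n-1}+\alpha_{n+1}}v^+$. Here I would argue that $f_{r,n}v^+\in V_{r,n}$ forces $\hat{f}_{r,n+1}v^+\in\hat{V}_{r,n+1}$: the natural symmetry coming from the graph automorphism $\sigma$ exchanging $\alpha_n\leftrightarrow\alpha_{n+1}$ identifies the problem for $\hat{f}_{r,n+1}$ with that for $f_{r,n}$, so the containment transfers. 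Then Lemma~\ref{Lemma_on_V_r,n_VS_Vr,n+1} runs in the reverse direction to confirm consistency. I expect the main obstacle to be precisely this interchange between the ``hatted'' vectors and the unhatted ones: one has to verify that establishing the condition in the $A_n$-Levi (where Theorem~\ref{How_to_relate_divisibility_conditions_and_generators_for_V_i,j} lives, only seeing $f_{r,n}$) genuinely controls the full set of non-$\Lie(X)$ root vectors, and the clean way to do this is to show $V_{r,n}\subset\Lie(X)v^+$ together with the symmetric statement for $\hat{V}_{r,n+1}$, after which Proposition~\ref{f_gamma_1...f_gamma_rv^+_in_Lie(X)v^+} closes the argument.
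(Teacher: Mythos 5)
Your converse direction is essentially the paper's argument, but with one inaccuracy: the claim that $V_{r,n}\subset\Lie(X)v^+$ is ``immediate'' from Lemma~\ref{All_f_beta's}\ref{All_f_beta's_1} is false as stated, because every spanning vector $f_{(m)}v^+=f_{r,m_1}\cdots f_{m_s+1,n}v^+$ ends in a factor $f_{m_s+1,n}$ involving $\alpha_n$, and that factor does \emph{not} lie in $\Lie(X)$; by Lemma~\ref{All_f_beta's}\ref{All_f_beta's_2} only the combination $f_{m_s+1,n}\pm\hat{f}_{m_s+1,n+1}$ does. The paper repairs this by descending induction on the starting index: the hypothesis $f_{i+1,n}v^+\in V_{i+1,n}$ for larger indices yields $f_{i+1,n}v^+\in\Lie(X)v^+$, and only the proper prefixes $f_{s,i}$ and $f_{(m)}$ need to lie in $\Lie(X)$. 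That gap is fixable.

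The forward direction, however, is where the real content of the theorem lies, and your argument for it fails. You assert that $\Lie(X)v^+$ meets the weight space $V_{\lambda-(\alpha_r+\cdots+\alpha_n)}$ inside $V_{r,n}$ because the expansions of the $f_\beta$'s ``produce exactly the split-product vectors.'' This is false: the generator $f_{\beta_r+\cdots+\beta_n}=\pm(f_{r,n}\pm\hat{f}_{r,n+1})$ produces precisely the unsplit vector $f_{r,n}v^+$ (together with a hatted term of a \emph{different} $T_Y$-weight). Worse, under the irreducibility hypothesis $\Lie(X)v^+=V$, so the intersection you consider is the full weight space $V_{r,n}+\langle f_{r,n}v^+\rangle_K$, and your claimed containment is literally equivalent to the desired conclusion $f_{r,n}v^+\in V_{r,n}$: the argument is circular. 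Your fallback—transferring the containment to $\hat{f}_{r,n+1}v^+$ via the graph automorphism $\sigma$—also fails, since $a_n\neq 0=a_{n+1}$ gives $\sigma(\lambda)\neq\lambda$, so $\sigma$ does not act on $V$ (indeed ${}^\sigma V\cong L_Y(\sigma(\lambda))\not\cong V$), and the two vectors are genuinely asymmetric: $\lambda-(\alpha_r+\cdots+\alpha_{n-1}+\alpha_{n+1})$ is $\mathscr{W}$-conjugate to $\lambda-\alpha_r$ and has multiplicity $1$, whereas $\m_V(\lambda-(\alpha_r+\cdots+\alpha_n))$ can be as large as $n-r+1$. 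The paper's actual mechanism is different: for $a_r=0$ the claim is trivial ($f_{r,n}v^+=\pm f_{\alpha_r}f_{r+1,n}v^+$); for $r=k$ it follows from the congruence $p\mid(a_k+a_n+n-k)$ of Proposition~\ref{A_first_reduction}\ref{3rd_cond_reduction_lemma} combined with Proposition~\ref{8.6_Seitz}; and for $a_r\neq 0$ with $r<k$, one expands $f_{r,n}v^+\in(\Lie(X)v^+)_{\omega-(\beta_r+\cdots+\beta_n)}$ as $\xi f_{r,n}v^+\pm\xi\hat{f}_{r,n+1}v^+ +x+y$ with $x\in V_{r,n}$, $y\in\hat{V}_{r,n+1}$—the key point being that the coefficients of $f_{r,n}v^+$ and $\hat{f}_{r,n+1}v^+$ are \emph{linked} through the single generator $f_{\beta_r+\cdots+\beta_n}$—then separates $T_Y$-weight components: if $\xi\neq 1$ one solves directly for $f_{r,n}v^+\in V_{r,n}$, while if $\xi=1$ one obtains $\hat{f}_{r,n+1}v^+\in\hat{V}_{r,n+1}$ and invokes Lemma~\ref{Lemma_on_V_r,n_VS_Vr,n+1} (apply $e_{\alpha_{n+1}}$ and then $f_{\alpha_n}$ to the relation) to pass from the hatted containment back to the unhatted one—the opposite direction from your proposed transfer, and valid only for $r\leq k-1$, which is exactly why the case $r=k$ must be settled separately. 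None of these mechanisms appears in your proposal, so the forward implication remains unproven.
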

\begin{proof}
First assume $f_{r,n}v^+\in V_{r,n}$ for every $1\leq r\leq n-1.$ By  Proposition \ref{f_gamma_1...f_gamma_rv^+_in_Lie(X)v^+}, in order to show that $V|_X$ is irreducible, it suffices to show that $f_{s,n}v^+\in \Lie(X)v^+$ for every $1\leq s\leq n.$  We proceed by induction on $0\leq n-s\leq n-1,$ starting by assuming $s=n.$ Now since  $a_{n+1}  =0,$ we immediately get  that $f_{\alpha_{n+1}}v^+=0$ and hence $f_{\beta_n}v^+=f_{\alpha_n}v^+$  by \eqref{generators_for_B}, that is, $f_{\alpha_n}v^+\in \Lie(X)v^+.$ Next  assume $1\leq s\leq n-1.$ Since $f_{s,n}v^+\in V_{s,n}$ by assumption, there exists $(\xi_i, \xi^i_{(m)}:  s\leq i\leq n-1, (m)\in P(s,i))\subset K$ such that
\[
f_{s,n}v^+ =  \sum_{i=s}^{n-1} {\left( \xi_{i}f_{s,i}f_{i+1,n}v^+ +\sum_{(m)\in P(s,i)}{ \xi^i_{(m)}f_{(m)}  f_{i+1,n}v^+}\right)}.
\]
By Lemma \ref{All_f_beta's}, $f_{s,i}\in \Lie(X)$ and $f_{(m)} \in \Lie(X) $ for every $s\leq i \leq n-1$ and   every $(m)\in P(s,i).$ Furthermore, we also have $f_{i+1,n}v^+\in \Lie(X)v^+$ for $s\leq i \leq n-1$ by induction.  Therefore  $f_{s,n}v^+\in \Lie(X)v^+$ as desired. 

Conversely, assume $V|_X$ irreducible, and let $1\leq r\leq n-1$ be fixed. If $a_r =0,$ then $f_{\alpha_r}v^+=0$ and hence $f_{r,n}v^+ = \pm f_{\alpha_r}f_{r+1,n}v^+,$ so that $f_{r,n}v^+ \in V_{r,n}$ in this situation.  In addition, observe that $p\mid (a_k+a_n+n-k)$ by Proposition \ref{A_first_reduction} \ref{3rd_cond_reduction_lemma}, thus yielding $f_{k,n}v^+\in V_{k,n}$ by Proposition  \ref{8.6_Seitz}. Therefore we assume $a_r\neq 0$ in the remainder of the proof, and  $r<k.$ By Lemma \ref{V|X_irreducible_iff_V=Lie(X)v+}, we have $V=\Lie(X)v^+,$ in which case Proposition \ref{f_gamma_1...f_gamma_rv^+_in_Lie(X)v^+} applies, thus yielding $f_{r,n}v^+\in \Lie(X)v^+.$ Since $f_{r,n}v^+\in (\Lie(X)v^+)_{\omega-(\beta_r+\cdots+\beta_n)},$ we get the existence of $\xi \in K,$ $x\in V_{r,n}$ and $y\in \hat{V}_{r,n+1}$ such that  $ f_{r,n}v^+ = \xi f_{r,n}v^+  \pm \xi \hat{f}_{r,n+1}v^+ + x + y.$  Comparing $T_Y$-weights yields 
$$
(\xi-1)f_{r,n}v^+ \in V_{r,n} \mbox{ and } \pm \xi \hat{f}_{r,n+1}v^+ 	 \in \hat{V}_{r,n+1}.
$$ 
If $\xi \neq 1,$ then the assertion is immediate, while if on the other hand $\xi =1,$ then an application of Lemma \ref{Lemma_on_V_r,n_VS_Vr,n+1} yields the desired result.
\end{proof}

We are now able to complete the proof of Theorem \ref{Main_result_1}.

\begin{proof}[\textbf{Proof of Theorem  \ref{Main_result_1}}] First assume $X$ acts irreducibly on $V.$
 By Proposition~\ref{A_first_reduction}\ref{1st_cond_reduction_lemma}, we then have that, up to a 
graph automorphism of $Y$, $a_n\neq 0=a_{n+1}$, and if $\lambda=a\lambda_n$, then $\lambda$ is as in 
the statement of the result. So assume the existence of $1\leq k\leq n-1$ maximal with $a_k\ne 0$. 
Then Theorem \ref{Key_Theorem}  applies, yielding $f_{r,n}v^+\in V_{r,n}$ for every $1\leq r\leq n-1.$ 
An application of Theorem \ref{How_to_relate_divisibility_conditions_and_generators_for_V_i,j} 
then implies the desired divisibility conditions.

Conversely, assume $\lambda\in X^+(T_Y)$ satisfies the  conditions in Theorem \ref{Main_result_1}. In the case where $\lambda=a\lambda_n$ for some $a\in \mathbb{Z}_{>0},$ then the result follows from  Proposition 
~\ref{A_first_reduction}\ref{2nd_cond_reduction_lemma}, hence we assume the existence of $1\leq k\leq n-1$ maximal such that $a_ka_n\neq 0,$ and assume moreover the divisibility conditions as in the theorem. Here an application of Theorem~\ref{How_to_relate_divisibility_conditions_and_generators_for_V_i,j}  yields  $f_{r,n}v^+\in V_{r,n}$ for every $1\leq r \leq n-1,$ and hence Theorem~\ref{Key_Theorem} then shows that $X$ acts irreducibly on $V$ as desired. 
\end{proof}

\section{Proof of Proposition \ref{Main_result_2}}

Let $Y=\Spin_{2n+2}(K)$ be a simply connected, simple algebraic group of type $D_{n+1}$ over $K,$ with $n\geq 2.$  Let $X\subset Y$ be a semisimple, connected, 
proper, closed subgroup of $Y.$ Fix $T_Y$ a maximal torus of $Y$  and let $T_Y \subset B_Y $  denote a  Borel subgroup  of $Y.$  Also let $\{\lambda_1,\ldots,\lambda_{n+1}\}$  be the corresponding set of fundamental dominant weights for $T_Y.$ In this section, we give a proof of Proposition  \ref{Main_result_2}, starting with three results, proven in a more general setting in \cite[Section 5]{seitz}. For the convenience of the reader, and in order to render the current manuscript more self-contained, we include the proofs of these special cases here.

\begin{prop}\label{reduction}
Let $Y$ be a simply connected simple algebraic group of type $D_{n+1}$ with natural module $W$ and let $X$ be a semisimple, connected, proper, closed subgroup of $Y$ acting irreducibly on a non-trivial, $p$-restricted, irreducible $KY$-module $V=L_Y(\lambda)$. Then one of the following holds.
\begin{enumerate} 
\item \label{reduction_1} $W|_X$ is irreducible.
\item \label{reduction_2} $W|X$ is reducible and $X\subset B_k\times B_{n-k}$, for some $0\leq k<n$. 
\end{enumerate}
\end{prop}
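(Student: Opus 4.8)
The plan is to argue by a dichotomy on the restriction $W|_X$ of the natural module. If $W|_X$ is irreducible we are in case~\ref{reduction_1}, so assume $W|_X$ is reducible; the goal is then to produce a proper, non-degenerate, \emph{odd}-dimensional $X$-invariant subspace $W_1\subset W$. Once such a $W_1$ is found, its perpendicular $W_1^{\perp}$ is also non-degenerate, $X$-invariant and of odd dimension (the invariant form on $W$ is non-degenerate of even total dimension $2n+2$), so $X\subset \SO(W_1)\times\SO(W_1^{\perp})=B_k\times B_{n-k}$ with $\dim W_1=2k+1$; relabelling the two factors if necessary gives $0\le k<n$, which is case~\ref{reduction_2}. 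Throughout I use that $W|_X$ is self-dual via the invariant symmetric form, so that every $X$-submodule $U$ has an $X$-invariant perpendicular $U^{\perp}$, with $U\cap U^{\perp}$ again an $X$-submodule.

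First I would rule out the existence of a non-zero \emph{totally singular} $X$-invariant subspace. Such a subspace would place the connected reductive group $X$ inside a proper parabolic subgroup $P_Y=Q_YL_Y$ of $Y$; being reductive, $X$ is conjugate into a Levi factor $L_Y$ (Borel--Tits), and then the central torus $Z=Z(L_Y)^{\circ}$ centralises $X$. By Schur's lemma $Z$ acts by a single character on the irreducible module $V|_X$, so all weights of $V$ agree on $Z$. But $\lambda\ne 0$ and $Y$ is simple, so $\lambda-w_0\lambda$ is a non-zero dominant weight in the root lattice and hence a positive combination of \emph{all} simple roots (connectedness of the Dynkin diagram forces full support); in particular the lowest weight $w_0\lambda$ differs from $\lambda$ by a root outside $L_Y$, so $\lambda$ and $w_0\lambda$ restrict differently to $Z$, a contradiction. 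Consequently no $X$-invariant subspace is totally singular, and in particular every irreducible $X$-submodule $U_0\subset W$ is non-degenerate (the submodule $U_0\cap U_0^{\perp}$ of the irreducible $U_0$ is either $0$ or $U_0$, and the latter is excluded). Splitting off such a $U_0$ and recursing on the orthogonal complement $U_0^{\perp}$ --- again self-dual with no totally singular invariant subspace --- I obtain an orthogonal decomposition $W=W_1\perp\cdots\perp W_t$ into irreducible, non-degenerate $X$-submodules, with $t\ge 2$ since $W|_X$ is reducible. Thus $X\subset \SO(W_1)\times\cdots\times\SO(W_t)$.

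If some $W_i$ has odd dimension, taking it as $W_1$ finishes the proof. The crux, which I expect to be the main obstacle, is therefore to exclude the possibility that \emph{every} $W_i$ is even-dimensional. This cannot be settled by orthogonal geometry alone: for instance the diagonal $\SO_4\subset \SO_4\times\SO_4\subset\SO_8$ preserves only even-dimensional non-degenerate subspaces (its endomorphism algebra on the natural module is $M_2(K)$, so all submodules have even dimension), so one genuinely needs the hypothesis that $X$ acts irreducibly on the non-trivial module $V$. Here I would specialise the reduction of \cite[Section 5]{seitz}: passing to the parabolic subgroups of $\SO(W_1)\times\cdots\times\SO(W_t)$ induced by a parabolic of $X$ and applying the commutator-series and restricted-highest-weight results of \cite[Section 2]{seitz}, one extracts enough constraints on $\lambda$ from the self-duality of $W|_X$ together with the irreducibility of $V|_X$ to show that an all-even decomposition into $t\ge 2$ pieces is incompatible with $X$ acting irreducibly on a non-trivial $V$. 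This forces the existence of an odd-dimensional non-degenerate $X$-invariant subspace, placing $X$ in some $B_k\times B_{n-k}$ and completing case~\ref{reduction_2}.
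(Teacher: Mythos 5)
There are two genuine gaps. First, your claim that ``every irreducible $X$-submodule $U_0\subset W$ is non-degenerate'' is false when $p=2$, and this is exactly where the case $k=0$ of conclusion (ii) comes from. Your first step rules out \emph{totally singular} invariant subspaces (singular with respect to the quadratic form $\mathcal Q$), but $U_0\cap U_0^{\perp}=U_0$ only says $U_0$ is totally isotropic for the \emph{bilinear} form; in characteristic $2$ the bilinear form is alternating, so a $1$-space $\langle v\rangle$ with $\mathcal Q(v)\neq 0$ satisfies $U_0\subseteq U_0^{\perp}$ without being totally singular, and your dichotomy does not exclude it. The paper's proof takes a \emph{minimal} invariant subspace $W_1$ and treats precisely this situation: if $W_1\cap W_1^{\perp}=W_1$ then $p=2$ (for $p\neq 2$ isotropic equals singular), the singular vectors of $W_1$ form a proper $X$-invariant subspace, and minimality forces $W_1$ to be a non-singular $1$-space, whence $X\subset B_n$, i.e.\ (ii) with $k=0$. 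Your recursion into an orthogonal sum of non-degenerate irreducibles simply breaks down here, and this case is needed downstream (it is how the embedding $B_n\subset D_{n+1}$, the subject of Theorem~\ref{Main_result_1}, is reached in the proof of Proposition~\ref{Main_result_2}).

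Second, you correctly identify the exclusion of an all-even-dimensional decomposition as the crux, but you do not prove it; the appeal to ``the reduction of \cite[Section 5]{seitz}'' via parabolic embeddings and commutator series is a gesture at machinery (indeed at the very part of \cite{seitz} that this proposition re-proves), not an argument. The paper settles it in one line: if $\dim W_1$ is even, then $X\subset D_s\times D_{n+1-s}$, a \emph{maximal rank} connected subgroup of $Y$, and \cite[Theorem 4.1]{seitz} --- one of the two external inputs the authors explicitly allow themselves, whose proof is independent of \cite[Section 8]{seitz} --- shows such a subgroup acts reducibly on every non-trivial irreducible $KY$-module, a contradiction. Note also that no full decomposition $W=W_1\perp\cdots\perp W_t$ is needed: one minimal submodule and its perpendicular suffice, since $\dim W_1$ odd already forces $\dim W_1^{\perp}$ odd. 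Your Borel--Tits/central-torus argument against parabolic containment is correct but heavier than necessary (the fixed-point space of $R_u(P_Y)$ on $V$ is a proper non-zero $X$-invariant subspace); that part is not a gap.
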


\begin{proof} Suppose $W|_X$ is reducible. Let $W_1$ be a minimal non-zero $X$-invariant subspace of $W$. Then $W_1\cap W_1^\perp = W_1$ or $\{0\}$. Suppose  $W_1\cap W_1^\perp = W_1$. Note that $W_1$ is not totally singular, else $X$ lies in a proper parabolic subgroup of $Y$ and so cannot act irreducibly on $V$. Therefore, we have $p=2$; the set of singular vectors in $W_1$ being an $X$-invariant subspace of $W_1$ forces $W_1$ to be generated by a non-singular vector, and so \ref{reduction_2} holds with $k=0$. In case $W_1\cap W_1^\perp=\{0\}$, set $W_2 = W_1^\perp$, so that $W=W_1\oplus W_2$, an orthogonal direct sum and the image of $X$ in $\Isom(W)$ lies in $\Isom(W_1)'\times \Isom(W_2)'$. If $\dim W_1$ is even, then $X\subset D_s\times D_{n+1-s}$, a maximal rank subgroup of $Y$ and we may invoke \cite[Theorem 4.1]{seitz} to see that $X$ acts reducibly on $V$. So $\dim W_1$ is odd and $X\subset B_k\times B_{n-k}$ as in \ref{reduction_2}.
\end{proof}

\begin{prop}\label{semisimple} 
Let $Y$ and $V$ be as in Proposition~\textnormal{\ref{reduction}\ref{reduction_2}} for some $0<k<n$, and let $V$ be of highest weight $\lambda$. Then the closed, connected subgroup $H\subset Y$ of type $B_k\times B_{n-k}$ acts irreducibly on $V$ if and only if $\lambda = \lambda_n$ or $\lambda_{n+1}$.
\end{prop}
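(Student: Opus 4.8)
The plan is to analyze the restriction of the natural module $W$ and the irreducible module $V$ to the subgroup $H = B_k \times B_{n-k}$, using the structure of the embedding $H \subset Y = D_{n+1}$ as the stabilizer of an orthogonal decomposition $W = W_1 \perp W_2$ with $\dim W_1 = 2k+1$ and $\dim W_2 = 2(n-k)+1$, both odd-dimensional. The key observation is that the weights $\lambda_n$ and $\lambda_{n+1}$ are the two spin weights of $D_{n+1}$, and the corresponding modules are the half-spin modules of dimension $2^n$. Under the restriction to $B_k \times B_{n-k}$, a half-spin module for $D_{n+1}$ should decompose as a tensor product of the spin modules for $B_k$ and $B_{n-k}$, each irreducible, and a dimension count $2^n = 2^k \cdot 2^{n-k}$ confirms irreducibility. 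This handles the ``if'' direction.

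**For the ``only if'' direction**, I would proceed by contradiction, assuming $H$ acts irreducibly on $V = L_Y(\lambda)$ for some dominant $\lambda \neq \lambda_n, \lambda_{n+1}$, and derive constraints forcing $\lambda$ into the spin weights. The natural approach is to use a maximal torus $T_H \subset T_Y$ and compare weight multiplicities, analogous to the multiplicity arguments in Proposition~\ref{A_first_reduction}. Specifically, if $\lambda$ has a nonzero coefficient $a_i$ with $i \leq n-1$, then $V$ contains weight vectors (obtainable via the $f_{\gamma}$ of Lemma~\ref{Generators_for_weight_spaces_in_irreducibles}) whose restrictions to $T_H$ collide in a single $T_H$-weight space, producing a multiplicity in $V|_H$ that exceeds the multiplicity in any single irreducible $KH$-module $L_{B_k}(\mu_1) \otimes L_{B_{n-k}}(\mu_2)$ having the correct highest weight. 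This forces $a_1 = \cdots = a_{n-1} = 0$, so $\lambda = a_n \lambda_n + a_{n+1}\lambda_{n+1}$; since one half-spin weight gives irreducibility only for the basic spin representation, a further multiplicity or dimension argument must eliminate $a_n + a_{n+1} \geq 2$.

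**The main obstacle will be** making the weight-multiplicity comparison precise for the tensor product $L_{B_k}(\mu_1) \otimes L_{B_{n-k}}(\mu_2)$, since the highest weight $\omega$ of $V|_H$ splits as a pair $(\mu_1, \mu_2)$ and one must bound $\m_{L_{B_k}(\mu_1)\otimes L_{B_{n-k}}(\mu_2)}(\nu)$ uniformly while exhibiting enough weights of $V$ restricting to a common $\nu$. The embedding of $T_H$ in $T_Y$ must be pinned down explicitly: writing the ambient root system of $D_{n+1}$ in coordinates $\pm e_i \pm e_j$, the two factors $B_k$ and $B_{n-k}$ correspond to disjoint coordinate blocks, so that the restriction map on characters sends the $\lambda_i$ to the appropriate fundamental weights of the two factors, with the spin-weight interaction concentrated at the indices $n, n+1$. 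I expect the cleanest route is to choose a specific low weight $\nu = \omega - (\beta_i + \cdots)$ with coefficient collisions coming precisely from the two $T_Y$-weights $\lambda - (\alpha_i + \cdots + \alpha_n)$ and $\lambda - (\alpha_i + \cdots + \alpha_{n-1} + \alpha_{n+1})$ that restrict equally to $T_H$, exactly as in the reductions of Proposition~\ref{A_first_reduction}, and to show this already forces reducibility whenever $\lambda$ is not a single fundamental spin weight.
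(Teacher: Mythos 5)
Your ``if'' direction is exactly the paper's argument (restrict the highest weight, recognize the tensor product of the two spin modules, compare dimensions $2^n=2^k\cdot 2^{n-k}$), but your ``only if'' direction has concrete gaps that the sketch does not close. First, the multiplicity comparison you defer is not a routine adaptation of Proposition~\ref{A_first_reduction}. There $X=B_n$ is the fixed-point subgroup of a graph automorphism, every simple root of $X$ is the restriction of a simple root of $Y$, and exact multiplicities on both sides are available (Proposition~\ref{8.6_Seitz}, Freudenthal). For $H=B_k\times B_{n-k}$ the roots of $H$ are \emph{not} restrictions of roots of $Y$ (in standard coordinates $e_i-e_{k+j}$ restricts to $\epsilon_i-\delta_j$, which is not a root of $H$), and on the tensor-product side the weight $(\mu_1-(\beta_i+\cdots+\beta_k),\,\mu_2)$ produced by your collision can itself have multiplicity as large as $k-i+1$ in $L(\mu_1)\otimes L(\mu_2)$, with the exact value depending on $p$-divisibility conditions exactly as in Proposition~\ref{8.6_Seitz}. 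So exhibiting two colliding $T_Y$-weights proves nothing without exact multiplicities on both sides; this congruence-sensitive bookkeeping is precisely what was mishandled in \cite[8.7]{seitz}, the error this paper corrects. Second, your endgame fails already at the first test case: for $\lambda=2\lambda_{n+1}$ (so $p>2$) one has $\langle\lambda,\alpha_n\rangle=0$, so $\lambda-\alpha_n$ is not a weight of $V$, and the \emph{unique} $T_Y$-weight of $V$ restricting to $\omega-\delta_{n-k}$ is $\lambda-\alpha_{n+1}$, of multiplicity one --- equal to the multiplicity of $(\mu_1,\,\mu_2-\delta_{n-k})$ in $L(\mu_1)\otimes L(\mu_2)$. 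No excess multiplicity appears at depth one even though $L_Y(2\lambda_{n+1})|_H$ is reducible (for $D_4\supset B_1B_2$ in characteristic $0$ the dimensions are $35$ versus $3\cdot 10$), so eliminating $a\lambda_n+b\lambda_{n+1}$ with $a+b\geq 2$ requires an argument at greater depth that you have not supplied. Third, you never treat $p=2$, where weight-multiplicity arguments degenerate and where $H$ in fact fixes a nonsingular $1$-space and lies in a subgroup of type $B_n$.

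The paper avoids all of this with a structural induction on $n$ requiring no multiplicity computations in $V$: maximal totally singular subspaces $U_i\subset W_i$ give a totally singular $U_0=U_1\oplus U_2$ of dimension $n$, hence parabolic subgroups $P\subset R$ of $Y$ with $(R/R_u(R))'\cong\SL_n$ in which the Levi factor of $P$ projects into a proper parabolic; Smith's theorem \cite{smith} then kills all labels on the $A_{n-1}$ part of the diagram, forcing $\lambda=a\lambda_n+b\lambda_{n+1}$. A second parabolic embedding --- the stabilizer of a singular $1$-space in the larger factor $W_1$ --- places $B_{k-1}\times B_{n-k}$ inside a $D_n$-Levi, and induction pins $(a,b)$ to $(1,0)$ or $(0,1)$; the base case $n=2$ is settled by the rank-one theorem \cite[6.1]{seitz} applied to $A_1\times A_1\subset A_3$, and $p=2$ separately via \cite[Theorem 4.1]{seitz} for the maximal-rank subgroup $B_k\times B_{n-k}\subset B_n$ together with a dimension count excluding $\lambda_n+\lambda_{n+1}$. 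To salvage your blueprint you would essentially have to reprove these reductions weight by weight, including the congruence-sensitive cases, which is a substantially harder route.
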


\begin{proof} To see that $H$ acts irreducibly on the two half-spin modules for $Y$, we simply compute 
the restriction of the highest weight to a maximal torus of $H,$ note that the restriction induces the tensor product of the spin modules for the two simple factors of $H,$ and then a dimension comparison completes the proof.

We now assume $H$ acts irreducibly on $V$, the irreducible $KY$-module of highest weight $\lambda$. We proceed by induction on $n$ and first consider the case $n=2,$ so that $Y$ is of type $D_3 = A_3$ and the image of $H$ in $\SO_6$ lies in the subgroup $\SO_3\times \SO_3\subset \SO_6$. Since $H$ acts irreducibly on $V$, $H$ does not lie in a parabolic subgroup of $Y$ and so acts irreducibly on the $4$-dimensional $p$-restricted $KY$-modules, which correspond to the highest weights as in the statement of the result. Thus the preimage of $\SO_3\times \SO_3$ in $\SL_4$ acts on the 4-dimensional module via the tensor product representation of $A_1\times A_1$ on $E\otimes E$, where $E$ is the natural $2$-dimensional representation of $\SL_2$. To see that $\SL_2\times \SL_2$ acts reducibly (and hence $H$ as well) on all other non-trivial, $p$-restricted, irreducibles for $Y,$ we require a further argument (and some additional notation).

Let $T$ be a maximal torus of $\SL_2\times \SL_2$ with $T\subset T_Y.$ Moreover, choose a base $\Pi = \{\alpha,\beta\}$ of the root system of $\SL_2\times\SL_2$, and a base $\Pi(Y)=\{\alpha_1,\alpha_2,\alpha_3\}$ of the root system of $Y$, viewing $Y$ as $D_3$. Now it is straightforward to see that up to conjugation we may assume that $\alpha_1|_T = \beta-\alpha$, $\alpha_j|_{T} = \alpha$ for $j=2,3$. Now we apply \cite[6.1]{seitz} to see that $\lambda \in\{c\lambda_j,b\lambda_1+a\lambda_j, c\geq 1, b>0, a+b=p-1, j=2,3\}$. In case $\lambda=c\lambda_j$, $V|_{\SL_2\times\SL_2} = S^c(E\otimes E)$ which is easily seen to be reducible if $c>1$. In any case,  $V|_{\SL_2\times\SL_2}$ has a composition factor with highest weight $\lambda|_T$.  In the cases $\lambda=b\lambda_1+a\lambda_j$, we see that the weight $\lambda-\alpha_1$ restricts to $T$ as $\lambda|_T-\beta+\alpha$ and hence lies in a second composition factor of $V|_{\SL_2\times\SL_2}$.  The result then holds for $n=2$. 

Assume now that $n\geq 3$  and that the result holds for $Y=D_\ell$ with $\ell<n+1$. Assume as well that $p>2$; we will treat the case $p=2$ at the end of the proof. Let $W_1$ and $W_2$ be as in the previous proof so that $B_k$ acts on $W_1$ and $B_{n-k}$ acts on $W_2$. Let $U_1$, respectively $U_2$, be maximal totally isotropic subspaces of $W_1$, resp. $W_2$. Then $U_0=U_1\oplus U_2$ is an $n$-dimensional totally isotropic subspace of $W$. Let $P\subset Y$ be the preimage of the stabilizer in $\Isom(W_1)'\times\Isom(W_2)'$ of $U_0$ and $R$ the preimage in $Y$ of the stabilizer in $\Isom(W)$ 
of $U_0$. Then $P\subset R$, $R_u(P)\subset R_u(R)$, $(P/R_u(P))'\cong \SL(U_1)\times\SL(U_2)$, while $(R/R_u(R))'\cong\SL(U_0)=\SL_n$. In particular, the image of the Levi factor of $P$ in $R/R_u(R)$ stabilizes $U_1$ and so lies in a proper parabolic subgroup of $R/R_u(R)$, and hence can act irreducibly on no non-trivial $(R/R_u(R))'$-module.  Then the  above remarks and an application of the main proposition of \cite{smith} shows that $\lambda=a\lambda_n+b\lambda_{n+1}$, for some $a,b$.

Assume $\dim W_1\geq \dim W_2$, so $\dim W_1\geq 5$. (Recall that $\dim V \geq 8.$) Let $P_1$ be  the stabilizer in
 $\SO(W_1)'$ of a singular $1$-space. Then $P_1\times \SO(W_2)'$ is a proper parabolic subgroup of $\SO(W_1)'\times \SO(W_2)'$ and is contained in the image (under the natural projection $Y\to \SO(W)$)  of the stabilizer in $Y$ of this $1$-space.  As $P_1 = B_{k-1} R_u(P_1)$, we have the Levi factor $B_{k-1}B_{n-k}$ projecting into the Levi factor of type $D_n$. Another application of \cite{smith} and the induction hypothesis yield the result.

Turn now to the case $n>2$ and $p=2.$ In this case we have that the subgroup $B_k \times B_{n-k}$ stabilizes a non-singular  1-space and so lies in a subgroup of type $B_n$. Now the subgroup $B_{k}\times B_{n-k}$ is a maximal rank subgroup of $B_n$ and we can appeal to \cite[Theorem 4.1]{seitz} applied to the pair $(B_k\times B_{n-k},B_n)$ to see that the irreducible $KB_n$-module $V|_{B_n}$ is a twist of the spin module and hence we deduce that $\lambda=\lambda_n+\lambda_{n+1},$ $\lambda_n,$ or $\lambda_{n+1}.$ However, $L_Y(\lambda_n+\lambda_{n+1})|_{B_{n}}$ is not irreducible,
as the highest weight affords a twist of the spin module for $B_n$ and this is of dimension strictly less than the dimension of $L_Y(\lambda_n+\lambda_{n+1})$. So $\lambda=\lambda_n$ or $\lambda_{n+1}$ as claimed.
\end{proof}

Propositions~\ref{reduction} and \ref{semisimple} show that under the hypotheses of Proposition~\ref{Main_result_2}, 
either Proposition~\ref{Main_result_2}(a) holds, or $X$ acts irreducibly on the natural $KY$-module $W$, or $X\subset B_n$  is as in 
Proposition~\ref{reduction}\ref{reduction_2} with $k=0$. 
Theorem~\ref{Main_result_1} handles the latter situation in case $X=B_n$. The resolution of this case will follow from induction; see the end of this section.
Now for the case where $X$ acts irreducibly on $W$, we first show that $X$ must act tensor indecomposably.

\begin{prop}\label{tensorind} 
Let $Y,$ $X,$ $V$ be as in Proposition~$\ref{Main_result_2}$, and let $W$ be the natural $(2n+2)$-dimensional $KY$-module. If $W|_X$ is irreducible, then $W|_X$ is tensor indecomposable.
\end{prop}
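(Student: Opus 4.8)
The plan is to argue by contradiction: suppose $W|_X$ is irreducible but \emph{tensor decomposable}, say $W|_X\cong W_1\otimes W_2$ with $\dim W_i\geq 2$. Since $X$ is semisimple and the decomposition is one of $X$-modules, the image of $X$ in $\mathrm{GL}(W)$ is contained in a commuting product $M_1M_2$, where $M_i$ is a semisimple group acting irreducibly on $W_i$ and trivially on $W_{3-i}$. The first step is to pin down the forms. As $Y=\SO(W)$, the module $W$ is orthogonal and $W\cong W^*$ as $KY$-modules, hence as $KX$-modules; by the essential uniqueness of the tensor factorization this forces $W_i\cong W_i^*$, so each $W_i$ carries a nondegenerate $M_i$-invariant bilinear form. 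The parity lemma for tensor products of forms (a symmetric form on $W_1\otimes W_2$ arises precisely from two symmetric or from two alternating forms) then shows that either both $W_i$ are orthogonal or both are symplectic. Consequently the image of $X$ lies in a tensor product subgroup $M=\mathrm{Cl}(W_1)\otimes \mathrm{Cl}(W_2)\subseteq \SO(W)=Y$, where $\mathrm{Cl}=\SO$ or $\Sp$ accordingly. (In characteristic $2$ the orthogonal case rests on the more careful analysis of quadratic forms carried out in \cite[Section 5]{seitz}.)

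Next I would pass from $X$ to the (possibly larger) group $M$: since $X\subseteq M$ and $X$ acts irreducibly on $V$, so does $M$, while $W|_M=W_1\otimes W_2$ is still tensor decomposable. Thus it suffices to show that a tensor product subgroup $M$ acts irreducibly on $L_Y(\lambda)$ only when $\lambda=\lambda_1$, which is excluded since $V$ is not the natural module. The engine here is the parabolic-embedding machinery of \cite[2.1]{seitz} together with \cite{smith}: choosing a singular line $\langle v_1\rangle$ in $W_1$, the subspace $v_1\otimes W_2$ is totally singular in $W$ of dimension $\dim W_2$, so the stabilizer $P_1M_2$ of $\langle v_1\rangle$ in $M$ lies in the parabolic $P_Y$ of $Y$ stabilizing $v_1\otimes W_2$, with $R_u(P_M)\subseteq R_u(P_Y)$. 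By \cite[2.1]{seitz}, the derived Levi factor of $P_M$ acts irreducibly on the commutator quotient $V/[V,R_u(P_Y)]$, whose highest weight is the restriction of $\lambda$. Since this Levi factor is again a tensor product subgroup of the $D$-type component of $L_Y'$, I would induct on $\rank(Y)$, the base cases forcing $\lambda=\lambda_1$.

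The hard part will be precisely this last reduction: one must verify that for $\lambda\neq\lambda_1$ the commutator quotient genuinely records a nontrivial highest weight on the relevant factor, so that the inductive hypothesis applies. The cleanest concrete check is that $M$ already fails to be irreducible on the low-degree composition factors coming from $W\otimes W$: as an $M$-module $\wedge^2(W_1\otimes W_2)$ splits as $(\wedge^2W_1\otimes S^2W_2)\oplus(S^2W_1\otimes\wedge^2W_2)$ with both summands nonzero (as $\dim W_i\geq 2$), and $S^2(W_1\otimes W_2)$ splits analogously, so $M$ is reducible on $L_Y(\lambda_2)$ and on $L_Y(2\lambda_1)$; the inductive step then propagates this obstruction to every $\lambda\neq\lambda_1$. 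The remaining delicate points, handled as in \cite[Section 5]{seitz}, are characteristic $2$ and the half-spin weights $\lambda_n,\lambda_{n+1}$, for which one argues directly (by a dimension and weight count) that a tensor product subgroup $M$ cannot act irreducibly — a situation to be kept distinct from the orthogonal \emph{direct-sum} subgroups treated in Proposition~\ref{semisimple}.
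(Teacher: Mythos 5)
Your opening step (invariant forms on the tensor factors, passage to the tensor-product subgroup $M$, which the paper calls $J$) matches the paper, but from there your route diverges and has genuine gaps. First, your reduction to the claim ``a tensor product subgroup acts irreducibly on $L_Y(\lambda)$ only when $\lambda=\lambda_1$'' is both far stronger than what is needed and not established: the base cases are merely asserted, and the proposed propagation step is a non sequitur --- reducibility of $M$ on $L_Y(\lambda_2)$ and $L_Y(2\lambda_1)$ does not transfer to arbitrary $\lambda$ via the commutator-quotient machinery, which reduces the question for $\lambda$ to the \emph{restricted weight} on a Levi, not to fundamental modules. Worse, the claim is actually false in the low-rank cases your induction would bottom out in: for $Y=D_4$, the triality image of $B_1B_2$ is the tensor subgroup $\Sp_2\otimes\Sp_4$, which acts irreducibly on a half-spin module; your closing ``dimension and weight count'' for the spin weights is precisely the unproven content. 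The inductive set-up itself is also flawed: for $P_Y$ the stabilizer of the totally singular subspace $v_1\otimes W_2$, the factor $\mathrm{Cl}(W_2)$ of the Levi of $P_M$ projects \emph{diagonally}, into both the $A_{f-1}$-factor ${\rm SL}(v_1\otimes W_2)$ and the $D$-type factor $\Isom(W_1'\otimes W_2)'$ of the Levi of $P_Y$, so the image is not ``again a tensor product subgroup of the $D$-type component'' and your inductive hypothesis does not apply to it. A smaller but real gap: with an untwisted decomposition $W_1\otimes W_2$ and $\dim W_1=\dim W_2$, self-duality of $W$ permits $W_1^*\cong W_2\not\cong W_1$, so your parity argument needs the distinct-Frobenius-twist form of Steinberg's uniqueness (the paper works with $W_1^{(f_1)}\otimes\cdots\otimes W_t^{(f_t)}$ and the highest-weight criterion for exactly this reason).

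The decisive difference is that you never use hypothesis (v) of Proposition~\ref{Main_result_2}: the congruence conditions on the restriction of $\lambda$ guarantee several prescribed nonzero labels, and this is what makes the paper's proof short and non-inductive. The paper builds a longer flag from isotropic lines $D_1\subset D$ and $F_1\subset F$ (with $V_1=D_1\otimes F_1$, $V_2=(D_1\otimes F_1^\perp)+(D_1^\perp\otimes F_1)$, $V_3=V_2^\perp$, $V_4=V_1^\perp$), observes that the Levi of $P_J$ stabilizes the singular subspace $((D_1\otimes F_3)+V_2)/V_2$ and hence lies in a proper parabolic of $\Isom(V_3/V_2)$, and then \cite{smith} forces zero labels on the whole $\Isom(V_3/V_2)$ portion of the diagram --- an immediate contradiction with the congruence conditions when $f>2$. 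The residual case $f=2$ survives this zero-label argument (one only gets $\lambda=a_1\lambda_1+a_n\lambda_n$) and is killed by a separate bespoke argument: $\Sp(D)$ sits inside an $A_n$-Levi and must act homogeneously on $V$, yet $\lambda$ and $\lambda-\alpha_n$ (or $\lambda-\alpha_{n-1}-\alpha_n-\alpha_{n+1}$) restrict to non-isomorphic $\Sp(D)$-composition factors. Your sketch has no counterpart for this $f=2$ case, and without invoking the congruence-condition hypothesis you are effectively attempting to reprove a substantial portion of Seitz's classification for tensor-product subgroups, which is not viable at this point of the paper.
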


\proof Suppose the contrary. Then write $W|_X=W_1^{(f_1)}\otimes\cdots\otimes W_t^{(f_t)}$, where $W_i$ is a $p$-restricted $KX$-module, $f_1,\dots,f_t$ are distinct $p$-powers, and $t\geq 2$. Then the criterion on the highest weight of self-dual modules shows that each $W_i$ carries an $X$-invariant non-degenerate bilinear form. Write $W=D\otimes F$, where each of $D$ and $F$ is a $KX$-module and the image of $X$ in $\Isom(W)'$ lies in $\Isom(D)'\circ\Isom(F)'$. Set $\dim D=d$ and $\dim F=f$ and assume $d\geq f\geq 2$, (and so at least one of $d$ and $f$ is even).

Extracting part of the argument given on page 76 of \cite{seitz}, we will now show that $Y$  contains a semisimple group inducing $\Isom(D)'\circ\Isom(F)'$. As pointed out above, $X$ stabilizes the product bilinear form on $W$. But then the irreducibility
of $X$ on $W$ forces this to be a scalar multiple of the form defining $Y$. Adjusting the form on $D$ if necessary, we may assume that the product form is precisely the form defining $Y$. This establishes the claim unless $p=2$. In this case, we have $\Isom(D)'\circ \Isom(F)'\subset \Sp(D)\circ\Sp(F)$. The latter group preserves a quadratic form $\mathcal Q$ on $W$, such that $\mathcal Q$ has the same polarization as the bilinear form on $W$ and also such that $\mathcal Q(x\otimes y)=0$ for all $x\in D$ and $y\in F$ (see \cite[Section 4.4]{KL}). By \cite[4.9]{Aschbacher}, $X$ fixes a unique quadratic form with prescribed bilinear form, and so this must necessarily be the form $\mathcal Q$ and we again have the claim. Hence we now have that $X$ lies in a closed subgroup $J$ of $Y$ which is the preimage in $Y$ of 
the group  $\Isom(D)'\circ\Isom(F)'$. In particular $J$ acts irreducibly on $V$ as well. Let $D_1$ be an isotropic $1$-space in $D$; then $D=D_1\oplus D_2\oplus D_3$, where $D_1^\perp=D_1\oplus D_2$, $D_2$ is non-degenerate, and $D_3$ is an isotropic $1$-space. Similarly decompose $F=F_1\oplus F_2\oplus F_3$. Now consider the flag of subspaces in $V:$
$$
0=V_0\subset V_1\subseteq V_2\subseteq V_3\subseteq V_4\subseteq V_5=V,
$$ 
where $V_1=D_1\otimes F_1$, $V_2=(D_1\otimes F_1^\perp)+(D_1^\perp\otimes F_1)$, $V_3=V_2^\perp$ and $V_4=V_1^\perp$. Then $V_1$ and $V_2$ are totally isotropic (totally singular if $p=2$), $\dim V_1=1=\dim(V/V_4)$ and $\dim(V_2/V_1)=\dim(V_4/V_3)=d+f-4$, and $\dim(V_3/V_2)=(d-2)(f-2)+2$.

Set $P_J$ to be the stabilizer in $J$ of the above flag, and $P_Y$ the stabilizer in $Y$ of this flag. Then $P_J$ is the product of the preimages of the parabolic subgroups $P_D$ and $P_F$ of $\Isom(D)$ and $\Isom(F)$ which are the stabilizers of the isotropic $1$-spaces $D_1$ and $F_1$, respectively. So $R_u(P_D)$ acts trivially on $D_1$, 
$D_1^\perp/D_1$ and $D/D_1^\perp$, and similarly for $R_u(P_F)$. On the other hand, $R_u(P_Y)$ is precisely the subgroup of $Y$ which acts trivially on $V_{j+1}/V_j$ for $0\leq j\leq 4$. One then sees that $R_u(P_J)\subset R_u(P_Y)$. Now $(P_Y/R_u(P_Y))'\cong (\Isom(V_3/V_2))'\circ\SL(V_2/V_1)$, and $(P_J/R_u(P_J))'\cong (\Isom(D_1^\perp/D_1))'\circ(\Isom(F_1^\perp/F_1))'$.

Assume for the moment that $f>2$; then $V_3/V_2$ is an orthogonal space of dimension at least 4. We note that the subspace $((D_1\otimes F_3) + V_2)/V_2$ is a non-zero singular subspace in $V_3/V_2$ left invariant by $(\Isom(D_1^\perp/D_1))'\circ(\Isom(F_1^\perp/F_1))'$ and so the projection of this latter group in $\Isom(V_3/V_2)$ is contained in a proper parabolic. It then follows from an application of \cite{smith} to the irreducible $KY$-module $V$ and the subgroup $J$ that the portion of the Dynkin diagram for $Y$ corresponding to the subgroup $\Isom(V_3/V_2)$ has zero labels, when representing $\lambda$ by a labeled diagram. But this contradicts our assumption on $\lambda$.

Consider now the case where $f=2$, so $\dim W=2n+2$ and $d=n+1$. Since we are assuming 
$Y=D_{n+1}$ with $n\geq 4$, we have $d\geq 5$. Now $\Isom(D_2)\circ \Isom(F_2)$ stabilizes the image of $D_1\otimes F_1^\perp$ in $V_2/V_1$ and so lies in a proper parabolic subgroup of $\Isom(V_2/V_1)$. Note that $\dim (V_2/V_1) = n-1$, and arguing as above we deduce that the nodes corresponding to $\SL(V_2/V_1)$ in the Dynkin diagram are labelled zero. So now we have $\lambda=a_1\lambda_1+a_n\lambda_n$, with $a_n\ne 0$. Moreover, the image of $X$ in $\Isom(W)'$ lies in $\Isom(D)'\circ\Isom(F)'= \Isom(D)'\circ\Sp_2$, which then implies that $D$ is even-dimensional and the latter group is $\Sp(D)\circ \Sp_2$, and acts irreducibly on $V$. The factor $\Sp(D)$ lies in the derived subgroup of an $A_n$-type Levi factor $L$ of $Y$, indeed is the naturally embedded  $\Sp_{n+1}$ subgroup of $A_n$. Moreover, $\Sp(D)$ acts homogeneously on $V$. Now $\lambda$ and $\lambda-\alpha_n$, or $\lambda-\alpha_{n-1}-\alpha_n-\alpha_{n+1}$,  depending on whether the root system of $L$ contains $\alpha_{n+1}$ or $\alpha_n$, afford the highest weights of irreducible summands of $V|_L$. It is then straightforward to see that the restrictions of these weights to the subgroup $\Sp(D)$ provide  non-isomorphic composition factors. This provides the final contradiction
in case $f=2$ and completes the proof of the result. 
\endproof

For the proof of Proposition~\ref{Main_result_2}, we continue with our consideration of the case where 
$X$ acts irreducibly and, by the previous result, tensor-indecomposably on the natural $KY$-module $W$. 
In particular, we may now assume that $X$ is a simple, proper, closed subgroup of $Y$. The hypotheses of 
Proposition~\ref{Main_result_2} then imply that $X$ is of type $B_m$ or of type $F_4$.
Moreover, we have the full set of hypotheses on the embedding of a parabolic subgroup  of $X$ in a parabolic subgroup of $Y$, in particular, with respect to the restriction of the highest weight $\lambda$ to a Levi factor. We will in fact show that $W|_X$ irreducible and the hypotheses of Proposition \ref{Main_result_2} are incompatible.

In each case, we will require some detailed information about the commutator series of an irreducible $KX$-module with 
respect to a fixed maximal parabolic subgroup. We start by considering:

\bigbreak

\noindent{\bf{Case 1: $X$ of type $B_m$, $m\geq 4$:}}\medbreak

Fix a maximal torus $T_X$ of $X$. Let $\Pi(X)=\{\beta_1,\dots,\beta_m\}$ be a base of the root system of $X$, $B_X$ the corresponding Borel subgroup containing $T_X$, and $\{\omega_1,\dots,\omega_m\}$ a set of fundamental dominant weights chosen with respect to the fixed base $\Pi(X)$. Set $s_i$ to be the reflection corresponding to the root $\beta_i$ for $1\leq i\leq m.$ For a torus $S$ of $X$, write $X(S)$ for the group of rational characters of $S$. Let $P_X$ be a maximal parabolic subgroup of $X$ corresponding to the subset $\{\beta_i :  2\leq i \leq m\}$, and containing the opposite Borel subgroup $B_X^-$. Let $Q_X=R_u(P_X)$ and $P_X=L_XQ_X$ for a Levi factor $L_X$ of $P_X.$ Let $\omega=\sum_{i=1}^m d_i\omega_i$ be a $p$-restricted dominant weight in $X(T_X)$; set $M=L_X(\omega)$ and assume $X$ preserves a non-degenerate quadratic from on $M$; let us denote this by $\mathcal Q$ and the associated bilinear form by $(\ ,\ ):M\times M\to K$.

For a unipotent group $J$ and a $KJ$-module $N$, we recall the standard notation $[N,J]$ for the subspace spanned by the set of vectors $n-xn$, where $x\in J$ and $n\in N$. We introduce an additional notation:
 set $[N,J^0]=N$ and set $[N,J^{i+1}]=[[N,J^i],J]$ for $i\geq 0$, so $[N,J^1]=[N,J]$.

\begin{defn}\label{def:levels}
Let $M=L_X(\omega)$ be as above.
\begin{enumerate}[(1)]
\item We will say a $T_X$-weight $\nu$ in $M$ has \emph{level} $i$ if $\nu=\omega-i\beta_1-\sum_{j=2}^m c_j\beta_j$ for some integers $i,c_j\geq 0$. 
\item Let $e(\omega)$ denote the maximum level of a weight. (When $\omega$ is fixed, we will simply write $e$.)

\end{enumerate}
\end{defn}

\begin{lem}\label{levels} 
Let $M = L_X(\omega)$. Then the following assertions hold.
\begin{enumerate} 
\item \label{levels_1} For $i\geq 0$, $[M,Q_X^i] = \sum M_\nu$, where the sum ranges over all $T_X$-weights $\nu$ of level at least $i.$
\item \label{levels_2} For $i\geq 0$, the $L_X'$-module $[M,Q_X^i]/[M,Q_X^{i+1}]$ is isomorphic to the module
$$
\sum_{\nu\in X(T_X)\mbox{\scriptsize{ of level }} i}M_\nu.
$$
\item \label{levels_3} The maximum level of weights in $M$ is $e=2(\sum_{i=1}^{m-1}d_i)+d_m$. If a weight $\nu$ is of level $j$, then $-\nu$ is of level $e-j$.
\item \label{levels_4} Let $S$ be a subtorus of $T_X$. For weights $\eta,\chi\in X(S)$, 
with $\eta\ne -\chi$, we have $V_\eta\subseteq V_\chi^\perp$. Moreover, if $\eta\ne 0$, then ${\mathcal Q}(u)=0$ for all $u\in V_\eta$.
\item \label{levels_5} The subspace $[M,Q_X^i]$ is totally singular for $i\geq (e+1)/2$.
\item \label{levels_6} If $e$ is even, then the subspace 
$$
\sum_{\nu\in X(T_X)\mbox{\scriptsize{ of level }} e/2} M_\nu
$$ 
is non-degenerate.
\item \label{levels_7} For $i\geq (e+1)/2,$ we have $[M,Q_X^i]^\perp = [M,Q_X^{e-i+1}].$
\item \label{levels_8} For all $i\geq (e/2)$, the quadratic form on $M$ induces an $L_X'$-invariant quadratic form on $[M,Q_X^i]/[M,Q_X^{i+1}]$. If the form is non-degenerate on this quotient space, then there exists $\nu$ of level $i$ such that $-\nu$ is also of level $i$. In particular, in this case, $d_m$ is even.
\end{enumerate}
\end{lem}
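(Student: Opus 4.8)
The plan is to reduce all eight assertions to the combinatorics of levels together with the orthogonality properties of the $X$-invariant form, exploiting the shape of $Q_X$. Since $P_X$ contains the opposite Borel $B_X^-$ and has Levi roots $\{\beta_2,\dots,\beta_m\}$, its unipotent radical $Q_X$ is generated by the root subgroups $U_{-\gamma}$ with $\gamma\in\Phi^+(X)$ having positive $\beta_1$-coefficient. The key observation I would record first is that for such a $\gamma$, applying $x_{-\gamma}(c)$ to a $T_X$-weight vector $v_\nu$ produces $v_\nu$ plus a sum of weight vectors each of strictly larger level, whereas $L_X$, whose roots lie in $\langle\beta_2,\dots,\beta_m\rangle$, preserves levels. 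Writing $M^{(i)}=\sum_{\mathrm{level}(\nu)\ge i}M_\nu$, this immediately gives $[M^{(j)},Q_X]\subseteq M^{(j+1)}$.

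Granting this, I would obtain \ref{levels_1} and \ref{levels_2} as follows. The inclusion $[M,Q_X^i]\subseteq M^{(i)}$ follows by induction on $i$ from $[M^{(j)},Q_X]\subseteq M^{(j+1)}$; the reverse inclusion $M^{(i)}\subseteq[M,Q_X^i]$, i.e.\ that the level filtration is exactly the commutator filtration, is precisely the content of the commutator-series results of \cite[Section 2]{seitz}, which I would invoke. Once \ref{levels_1} holds, $Q_X$ acts trivially on $M^{(i)}/M^{(i+1)}$, so the $P_X$-action on this quotient factors through $L_X'$ and is the natural action on $\sum_{\mathrm{level}(\nu)=i}M_\nu$, giving \ref{levels_2}.

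For \ref{levels_3} I would compute $e$ directly. As $-1\in\mathscr W$ for type $B_m$, the lowest weight of $M$ is $-\omega$, so the maximal level is the $\beta_1$-coefficient of $\omega-(-\omega)=2\omega$. Writing the fundamental weights as $\omega_i=\e_1+\dots+\e_i$ $(i<m)$ and $\omega_m=\tfrac12(\e_1+\dots+\e_m)$, and using $\e_k=\beta_k+\dots+\beta_m$, the $\beta_1$-coefficient of $\omega_i$ equals $1$ for $i<m$ and $\tfrac12$ for $i=m$; hence $e=2\sum_{i=1}^{m-1}d_i+d_m$. The same computation shows the $\beta_1$-coefficient of $\omega$ is $e/2$, and since the level of a weight $\nu$ is by definition the $\beta_1$-coefficient of $\omega-\nu$, adding the contributions of $\nu$ and $-\nu$ gives $\mathrm{level}(\nu)+\mathrm{level}(-\nu)=e$, the second claim of \ref{levels_3}.

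Assertions \ref{levels_4}--\ref{levels_8} are then formal. For \ref{levels_4}, $S$-invariance of $(\ ,\ )$ forces $M_\eta\perp M_\chi$ unless $\chi=-\eta$, while $\mathcal Q(sv)=\eta(s)^2\mathcal Q(v)=\mathcal Q(v)$ with $\eta\ne0$ forces $\mathcal Q$ to vanish on $M_\eta$. Taking $S=T_X$, weight spaces $M_\nu,M_\mu$ pair nontrivially only when $\mu=-\nu$, and then $\mathrm{level}(\mu)=e-\mathrm{level}(\nu)$ by \ref{levels_3}; this yields \ref{levels_5} (two levels $\ge i\ge(e+1)/2$ cannot sum to $e$, and no weight of level $>e/2$ is $0$), the non-degeneracy in \ref{levels_6} (for $\nu$ of level $e/2$, $-\nu$ is again of level $e/2$, so $M_\nu$ and $M_{-\nu}$ both lie in the space and pair perfectly), and \ref{levels_7} (from the perfect pairing $M_\nu\leftrightarrow M_{-\nu}$ one computes $(M^{(i)})^\perp=M^{(e-i+1)}$). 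Finally, for \ref{levels_8}, the perfect pairing shows $[M,Q_X^i]^\perp=[M,Q_X^{e-i+1}]$ for every $i$ (this is \ref{levels_7} for $i\ge(e+1)/2$); the hypothesis $i\ge e/2$ gives $e-i+1\le i+1$, so $[M,Q_X^{i+1}]\subseteq[M,Q_X^i]^\perp$ and is totally singular by \ref{levels_5}, whence $\mathcal Q$ descends to an $L_X'$-invariant form on the quotient $\cong\sum_{\mathrm{level}(\nu)=i}M_\nu$. If this induced form is non-degenerate, then for any level-$i$ weight $\nu$ with $M_\nu\ne0$ the pairing must occur within the quotient, forcing $-\nu$ to be of level $i$ as well; by \ref{levels_3} this means $e-i=i$, so $e=2i$ is even and hence $d_m=e-2\sum_{i<m}d_i$ is even. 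The only non-formal input is the reverse inclusion in \ref{levels_1}, for which I rely on \cite[Section 2]{seitz}; everything downstream is bookkeeping with the perfect pairing $M_\nu\leftrightarrow M_{-\nu}$.
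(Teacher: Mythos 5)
Your proposal tracks the paper's proof quite closely in most parts: the same level bookkeeping for \ref{levels_1} and \ref{levels_2}, the same computation of $e$ in \ref{levels_3} (the paper writes $w_0(\omega)=-\omega=\omega-2(d_1+\cdots+d_{m-1}+\tfrac{1}{2}d_m)\beta_1-\cdots$, which is your $\beta_1$-coefficient calculation), and the same reduction of \ref{levels_4}--\ref{levels_8} to the pairing $M_\nu\leftrightarrow M_{-\nu}$, with \ref{levels_7} obtained in the paper by the inclusion $[M,Q_X^{e-i+1}]\subseteq [M,Q_X^i]^\perp$ plus a dimension count via \ref{levels_2} and \ref{levels_3}, essentially your argument. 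The one structural divergence is in \ref{levels_1}: you prove the easy inclusion $[M,Q_X^i]\subseteq M^{(i)}$ and cite \cite[Section 2]{seitz} for the reverse, whereas the paper does exactly the opposite — it cites \cite[Section 27]{Humphreys2} for the easy half and proves the hard inclusion self-containedly, by induction on $i$: it spans $M_\nu$ by monomials $f_{\gamma_1}\cdots f_{\gamma_k}v^+$ via Lemma \ref{Generators_for_weight_spaces_in_irreducibles}, with the ordering chosen so that $\gamma_1$ involves $\beta_1$, and extracts the $\nu$-component of $x_{-\gamma_1}(1)w-w$ for $w=f_{\gamma_2}\cdots f_{\gamma_k}v^+\in [M,Q_X^{i-1}]$. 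Since the paper itself invokes \cite[2.3]{seitz} for precisely this level/commutator statement in the $F_4$ case and announces reliance on \cite[Section 2]{seitz} in the introduction, your citation is admissible; it just outsources the one step the authors chose to write out.

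There is, however, a genuine gap in your treatment of \ref{levels_8}. The step ``if the induced form is non-degenerate, then for any level-$i$ weight $\nu$ with $M_\nu\ne 0$ the pairing must occur within the quotient'' conflates non-degeneracy of the quadratic form $\mathcal{Q}$ with non-degeneracy of its polarization. When $p=2$ the polarization is alternating, so on an odd-dimensional quotient it necessarily has a $1$-dimensional radical, yet $\mathcal{Q}$ can still be non-degenerate provided that radical is non-singular; for the radical's weight your pairing argument produces nothing (and if the quotient is $1$-dimensional it produces nothing at all), so the existence of $\nu$ of level $i$ with $-\nu$ of level $i$ does not follow as you state it. The paper closes exactly this case: the radical, being stable under the $T_X$-action, is spanned by a weight vector, and since that vector is non-singular, the second clause of \ref{levels_4} — which you proved but never deploy in \ref{levels_8} — forces its weight to be $0$, whence $\nu=0=-\nu$ is of level $i$. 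Your argument is therefore complete only for $p\ne 2$ (or even-dimensional quotients); since the lemma, and Proposition \ref{Bm-case} where it is applied with $p=2$ expressly allowed, make no characteristic restriction, this missing half-line is a real omission, though easily repaired with an ingredient you already have in hand. Everything else in the proposal checks out.
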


\begin{proof}
We start by proving \ref{levels_1}, arguing by induction on $i,$ the case $i=0$ being trivial by definition. Let $i\geq 1,$  and fix an ordering on $\Phi^+(X)$ such that any root containing $\beta_1$ is smaller than the others. By \cite[Section 27]{Humphreys2}, we clearly have that $[M,Q_X^i]\subseteq \sum M_\nu,$ where the sum ranges over all $T_X$-weights $\nu$ of level at least $i,$ and so it remains to show that $M_\nu \subseteq [M,Q_X^i]$ for all such $T_X$-weights $\nu.$ It is enough to show the latter for $T_X$-weights of level exactly $i.$ Let then $\nu$ be a $T_X$-weight of level $i$ in $M,$ and let $\gamma_1 \leq \ldots \leq \gamma_k $ be  roots in $\Phi^+(X)$ such that $\sum_{r=1}^k{\gamma_r}=\lambda-\nu.$ We  show that  $f_{\gamma_1}\cdots f_{\gamma_k}v^+ \in [M,Q_X^i],$ where $v^+$ is a maximal vector in $M.$ (This will be enough by Lemma \ref{Generators_for_weight_spaces_in_irreducibles}.) Set $w=f_{\gamma_2}\cdots f_{\gamma_k}v^+.$ Then $w$ has level $i-1,$ as $\gamma_1$ involves $\beta_1$ thanks to our choice of ordering on $\Phi^+,$ and hence $w\in [M,Q_X^{i-1}]$ by induction. Also, we have
$$
x_{-\gamma_1}(1)w - w \in  f_{\gamma_1}\cdots f_{\gamma_k}v^+ + \sum_{r=1}^{\infty}{M_{\nu -r\gamma_1}}, 
$$
and hence $x_{-\gamma_1}(1)w - w$ has non-zero coefficient of $f_{\gamma_1}\cdots f_{\gamma_k}v^+,$ and all other terms in the sum are weight vectors of weights different from $\nu.$ One then deduces that $[M,Q_X^i]$ must contain $f_{\gamma_1}\cdots f_{\gamma_k}v^+$ as desired, showing that \ref{levels_1} holds.

The statement of \ref{levels_2} now follows by induction on $i$.

For \ref{levels_3}, set $w_0$ to be the longest word of the Weyl group of $X$. Writing the $\omega_i$ in terms of the simple roots $\beta_j$, we see that $w_0(\omega) = -\omega=\omega-2(d_1+d_2+\cdots+d_{m-1} + \frac{1}{2}d_m)\beta_1-\sum_{i=2}^m c_i\beta_i$ for some non-negative integers $c_i$, giving the result. For the final statement, suppose that $\nu=\omega-j\beta_1-\sum_{i=2}^m c_i\beta_i$, then  $-\nu=\omega-2\omega+j\beta_1+\sum_{i=2}^mc_i\beta_i =  \omega-(e-j)\beta_1-\sum_{i=2}^mb_i\beta_i$, for some non-negative integers $b_i$.

The statement of \ref{levels_4} is standard, and \ref{levels_5} and \ref{levels_6} follow from \ref{levels_4}.

For \ref{levels_7}, note that by \ref{levels_1} and \ref{levels_4}, we have $[M,Q_X^{e-i+1}]\subseteq [M,Q_X^i]^\perp$. Then the result follows from a  dimension argument using \ref{levels_2} and \ref{levels_3}.

For \ref{levels_8}, we note that for $i\geq (e/2)$, $[M,Q_X^{i+1}]$ is totally singular  and so the given quadratic form induces an $L_X'$-invariant form on the quotient. Now if $p\ne 2$, the second statement follows directly from \ref{levels_4}. If $p=2$, and $[M,Q_X^i]/[M,Q_X^{i+1}]$ is odd-dimensional, so that the bilinear form has a 1-dimensional non-singular radical, then \ref{levels_4} implies that the radical is generated by a vector of weight 0. So the weight $\nu=0$ satisfies the given condition. For the claim about the parity of $d_m$,  we recall that for $\nu$ of level $j$, $-\nu$ is of level $e-j$. So we deduce that $e$ is even, and so $d_m$ as well.
\end{proof}

In case $e$ is even, we will consider a certain $T_X$-weight at level $(e/2)$, namely the weight  
$$
\mu: = \omega-(e/2)\beta_1-(e/2-d_1)\beta_2 - 
\cdots - (d_{m-1}+(d_m/2))\beta_{m-1}-(d_m/2)\beta_m.
$$ 
Note that 
\begin{equation}
\mu = -(e/2)\omega_1+\sum_{i=2}^{m-1}d_{i-1}\omega_i+(2d_{m-1}+d_m)\omega_m.
\label{mu}
\end{equation}

\begin{lem}\label{lem_mu} 
Assume $d_m$ is even. For each $0\leq j\leq d_m$, there exists a unique  weight $\nu_j\in X(T_X)$, of  level $j+\sum_{i=1}^{m-1}d_i$, such that $\nu_j|_{T_X\cap L_X'} = \mu|_{T_X\cap L_X'}$. In addition, each $\nu_j$ is of multiplicity 1 and 
if $\nu\in X(T_X)$ with $\nu|_{T_X\cap L_X'} = \mu|_{T_X\cap L_X'}$, then $\nu=\nu_j$ for some $j$. In particular, there exists an odd number of weights $\nu$ such that $\nu|_{T_X\cap L_X'} = \mu|_{T_X\cap L_X'}$.
\end{lem}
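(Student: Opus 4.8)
The plan is to parametrise all weights sharing the restriction $\mu|_{T_X\cap L_X'}$ by a single integer, convert the level constraint into a constraint on that integer, and then read off existence, uniqueness and multiplicity by conjugating each candidate into a dominant weight of the very special shape $\omega-c\beta_m$. First I would record the restriction map in standard coordinates. Writing weights of $X=B_m$ in the basis $\varepsilon_1,\dots,\varepsilon_m$, one has $\omega_1=\varepsilon_1=\beta_1+\cdots+\beta_m$, while the simple roots $\beta_2=\varepsilon_2-\varepsilon_3,\dots,\beta_{m-1}=\varepsilon_{m-1}-\varepsilon_m,\ \beta_m=\varepsilon_m$ of $L_X'$ involve only $\varepsilon_2,\dots,\varepsilon_m$. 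Pairing a character $\sum_i b_i\varepsilon_i$ with the coroots $\beta_2^\vee,\dots,\beta_m^\vee$ gives $b_i-b_{i+1}=0$ for $2\le i\le m-1$ and $2b_m=0$, so $\chi|_{T_X\cap L_X'}=0$ forces $b_2=\cdots=b_m=0$; hence the kernel of the restriction $X(T_X)\to X(T_X\cap L_X')$ is exactly $\mathbb{Z}\omega_1$. Since $d_m$ is even, $\omega$ lies in the root lattice, so every weight of $M$ has integral $\varepsilon$-coordinates, and the weights of $M$ with the prescribed restriction are precisely those of the form $\mu+t\omega_1$ with $t\in\mathbb{Z}$.

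Next, because $\omega_1$ has $\beta_1$-coefficient $1$ and $\mu$ sits at level $e/2$ (Lemma~\ref{levels}\ref{levels_3}), the weight $\mu+t\omega_1$ has level $e/2-t$; equating this with $j+\sum_{i=1}^{m-1}d_i$ and using $e/2=\sum_{i=1}^{m-1}d_i+d_m/2$ yields $t=d_m/2-j$. This already gives uniqueness at each level and identifies the candidates $\nu_j=\mu+(d_m/2-j)\omega_1$ for $0\le j\le d_m$. I would then compute the $\varepsilon$-coordinates of $\mu$ directly from \eqref{mu}: writing $\omega=\sum_{i=1}^m r_i\varepsilon_i$ (so $r_1\ge\cdots\ge r_{m-1}\ge r_m=d_m/2\ge 0$), a short calculation gives $\mu=(0,r_1,\dots,r_{m-1})$; in particular the first coordinate of $\mu$ vanishes, so $\nu_j=(d_m/2-j,\,r_1,\dots,r_{m-1})$.

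Since $|d_m/2-j|\le d_m/2\le r_{m-1}$, sorting the coordinates shows that the dominant conjugate of $\nu_j$ under the Weyl group of $X$ is $(r_1,\dots,r_{m-1},|d_m/2-j|)=\omega-c_j\beta_m$, with $c_j=d_m/2-|d_m/2-j|\in\{0,\dots,d_m/2\}$. The same sorting gives a majorization test for general $t$: the dominant conjugate of $\mu+t\omega_1$ is $\preccurlyeq\omega$ if and only if $|t|\le d_m/2$. As every weight of $M=L_X(\omega)$ is $\preccurlyeq\omega$, the characters $\mu+t\omega_1$ with $|t|>d_m/2$ are not weights of $M$ at all. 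This confines the weights with the given restriction to $\nu_0,\dots,\nu_{d_m}$ and establishes the assertion that any such $\nu$ equals some $\nu_j$.

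It then remains to see that each $\nu_j$ genuinely occurs, with multiplicity $1$. As weight multiplicities are constant on Weyl orbits, it suffices to treat the dominant weight $\omega-c_j\beta_m$. Here Lemma~\ref{Generators_for_weight_spaces_in_irreducibles} applies with $\omega-(\omega-c_j\beta_m)=c_j\varepsilon_m$: the only way to write $c_j\varepsilon_m$ as a sum of positive roots is as $c_j$ copies of $\beta_m=\varepsilon_m$ (an induction on coordinates, using that once all summands involving $\varepsilon_1,\dots,\varepsilon_{i-1}$ are excluded the remaining positive roots carry coefficient $+1$ on $\varepsilon_i$, forces this), so $M_{\omega-c_j\beta_m}$ is spanned by the single vector $f_{\beta_m}^{\,c_j}v^+$. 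An $\SL_2$-computation along $\beta_m$ shows this vector is non-zero exactly when $c_j\le\langle\omega,\beta_m^\vee\rangle=d_m$, which holds since $c_j\le d_m/2$; hence $\omega-c_j\beta_m$, and therefore $\nu_j$, is a weight of multiplicity exactly $1$. Combining the three steps, there are precisely $d_m+1$ weights with restriction $\mu|_{T_X\cap L_X'}$, and since $d_m$ is even this number is odd, which is the final statement. The main obstacle is the passage from the Weyl module to the irreducible $M$: I avoid any multiplicity formula by reducing every candidate to the weight $\omega-c_j\beta_m$, where the explicit generation in Lemma~\ref{Generators_for_weight_spaces_in_irreducibles} pins the weight space down to a single, visibly non-zero vector.
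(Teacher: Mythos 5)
Your proof is correct and follows essentially the same route as the paper: both reduce the restriction condition to a one-parameter family of candidates (your parameter $t=d_m/2-j$ corresponds exactly to the paper's free parameter $c_m=j$ obtained by solving the linear system $\langle\nu,\beta_i^\vee\rangle=\langle\mu,\beta_i^\vee\rangle$ for $i\geq 2$), and both then conjugate each candidate under the Weyl group to a weight of the form $\omega-c\beta_m$, whose weight space is spanned by $f_{\beta_m}^{c}v^+$ and is non-zero precisely when $0\leq c\leq d_m$. The differences are only in bookkeeping: you work in $\varepsilon$-coordinates with dominant sorting and a majorization bound where the paper uses the explicit reflection word $s_{m-1}\cdots s_{1}$ and the vanishing of $f_{\beta_m}^{c}v^+$ outside $0\leq c\leq d_m$, and you supply in full the standard multiplicity-one details that the paper merely asserts.
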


\proof Let $\nu\in X(T_X)$ such that $\nu|_{T_X\cap L_X'} = \mu|_{(T_X\cap L_X')}$. Then  $\nu=\omega-\sum_{i=1}^m c_i\beta_i,$ and the $c_i$ must satisfy the equations
$$
d_{j+1}+c_j-2c_{j+1}+c_{j+2} = d_j\mbox{ for }1\leq j\leq m-2,\mbox{ and}
$$
$$
d_m+2c_{m-1}-2c_m = 2d_{m-1}+d_m.
$$
Solving these equations leads to the relations $c_j=c_m+\sum_{i=j}^{m-1}d_i$, for $1\leq j\leq m-1$. Now note as well that for $\nu$ as given, $s_{m-1}s_{m-2}\cdots s_{1}\nu = \omega-c_m\beta_m$. Hence $\nu$ has multiplicity at most 1 and has multiplicity 1 if and only if $c_m\leq d_m$. So for all $0\leq j\leq d_m$, there exists a weight $\nu_j$, of multiplicity 1, and of level $j+\sum_{i=1}^{m-1}d_i$ whose restriction to $T_X\cap L_X'$ is equal to $\mu|_{(T_X\cap L_X')}$.
\endproof

\begin{prop}\label{Bm-case} 
Assume the hypotheses of  Proposition \textnormal{\ref{Main_result_2}} with  $X=B_m$. Then $X$ acts reducibly on the  
natural $KY$-module.
\end{prop}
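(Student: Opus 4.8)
The plan is to show that the standing hypotheses of Proposition~\ref{Main_result_2} force $X$ to be the usual $B_n$-subgroup of $Y=D_{n+1}$, under which $W|_X$ is patently reducible; as the case under consideration has $W|_X$ irreducible, this yields the desired incompatibility. The engine of the argument is a comparison of two commutator filtrations of $V=L_Y(\lambda)$: the filtration by the powers $[V,Q_Y^i]$ coming from the parabolic embedding $P_X\subset P_Y$ of hypotheses (ii)--(iii), and the filtration by the powers $[M,Q_X^i]$ of $M=V|_X=L_X(\omega)$ governed by the level machinery of Lemma~\ref{levels}. From the first, \cite[2.1]{seitz} applied to the irreducible module $V$ shows that $L_X'=B_{m-1}$ acts irreducibly on $V/[V,Q_Y]=M_1\otimes\cdots\otimes M_t$; hypothesis~(v) and Theorem~\ref{Main_result_1}, applied to the usual embedding $B_{m-1}\subset D_m=L_t$, then pin down exactly how $B_{m-1}$ decomposes the $D_m$-factor $M_t$, and hence give full control of $V/[V,Q_Y]$ as a $KL_X'$-module. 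Since $B_{m-1}$ is simple and projects non-trivially into $L_t$, its projections into the remaining factors $L_1,\dots,L_{t-1}$ act trivially on $M_1,\dots,M_{t-1}$, so all of the $B_{m-1}$-action on $V/[V,Q_Y]$ is carried by $M_t$.

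Next I would bring in the self-dual geometry of the second filtration. As $X=B_m$ we have $-w_0=1$, so $M=V|_X$ is self-dual and carries a non-degenerate $X$-invariant form; in the orthogonal case this is the quadratic form $\mathcal Q$ of the Case~1 set-up. By Lemma~\ref{levels}\ref{levels_8} the form $\mathcal Q$ induces an $L_X'$-invariant quadratic form on the middle quotient $[M,Q_X^{e/2}]/[M,Q_X^{e/2+1}]$, and its non-degeneracy forces $e$, and hence $d_m$, to be even. Granting $d_m$ even, Lemma~\ref{lem_mu} produces the $d_m+1$ weights $\nu_0,\dots,\nu_{d_m}$, each of multiplicity one and all restricting to $\mu|_{T_X\cap L_X'}$, where $\mu$ is the level-$(e/2)$ weight of \eqref{mu}. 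Because $d_m+1$ is odd, the $(T_X\cap L_X')$-weight $\mu|_{T_X\cap L_X'}$ occurs with odd multiplicity in $M|_{L_X'}$, distributed across the levels $\sum_{i<m}d_i,\dots,\sum_{i<m}d_i+d_m$ of the $Q_X$-filtration. This parity statement is intrinsic to $X=B_m$ and the shape of $\omega$, independent of how $Y$ is assembled.

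The decisive, and I expect hardest, step is to confront this odd multiplicity with the structure of $V/[V,Q_Y]$ obtained in the first paragraph. I would compute the multiplicity of the $L_X'$-weight $\mu|_{T_X\cap L_X'}$ a second way, directly from the tensor decomposition $M_1\otimes\cdots\otimes M_t$, using the explicit $B_{m-1}$-decomposition of $M_t$ dictated by the congruence conditions of Definition~\ref{Definition:Congruence_conditions} and Theorem~\ref{Main_result_1}. The aim is to show that this forced multiplicity has the opposite parity to the one produced by Lemma~\ref{lem_mu} unless the Levi datum collapses to $t=1$, $L_Y'=L_t=D_n$ and $m=n$, that is, unless $X$ is the usual $B_n$ inside $Y=D_{n+1}$. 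The delicate points are the bookkeeping needed to locate $\mu|_{T_X\cap L_X'}$ inside each tensor factor and to match the two filtrations level by level; this is exactly where the congruence conditions must enter essentially, since they are what make the relevant $B_{m-1}$-multiplicities in $M_t$ computable. Once the embedding is forced to be the usual $B_n\subset D_{n+1}$, the natural module restricts as the orthogonal sum of the natural $B_n$-module and a non-singular $1$-space, so $W|_X$ is reducible, as required, and incompatible with the irreducibility assumed in this case.
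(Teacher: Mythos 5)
Your plan has genuine gaps, the most serious being that its decisive step is only an aspiration. You apply the level machinery of Lemma~\ref{levels} and Lemma~\ref{lem_mu} to $M=V|_X$, whereas the paper runs this entire apparatus on the \emph{natural} module $W=L_X(\omega)$ (assumed irreducible for contradiction): there the quadratic form is the one defining $Y$, and $e$, $\mu$ and $d_m$ refer to $\omega$, not to $\lambda$. For $V|_X$, self-duality of $B_m$-modules only yields an invariant bilinear form, which may be alternating (spin modules of $B_m$ are symplectic for half the congruence classes of $m$), so the quadratic form $\mathcal Q$ needed in Lemma~\ref{levels}\ref{levels_8} need not exist. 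Worse, your derivation of ``$e$ even'' is circular: the middle quotient $[M,Q_X^{e/2}]/[M,Q_X^{e/2+1}]$ only makes sense once $e$ is even, and Lemma~\ref{levels}\ref{levels_8} is conditional --- non-degeneracy of the induced form on that quotient is not automatic and cannot be used to \emph{force} evenness. The paper instead proves $e$ even by noting that if $e$ is odd then $[W,Q_X^{(e+1)/2}]$ equals its own perp by Lemma~\ref{levels}\ref{levels_7}, so $P_X$ lies in the stabilizer of a maximal totally singular subspace, a parabolic of $Y$ with Levi of type $A_n$; Smith's theorem applied to $V$ together with Hypothesis~\ref{inductive_hypothesis}(ii) and the two nonzero labels guaranteed by the congruence conditions then gives the contradiction. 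This is the one place the congruence hypothesis (v) genuinely enters, and it is absent from your outline.

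Your third paragraph then proposes to compare the odd multiplicity from Lemma~\ref{lem_mu} with a parity count of $\mu|_{T_X\cap L_X'}$ in $V/[V,Q_Y]=M_1\otimes\cdots\otimes M_t$, but no mechanism is given (``the aim is to show\dots''), and it is unclear the two counts are even comparable: one counts $T_X$-weights distributed through the whole $Q_X$-filtration, the other sees only the top commutator quotient. The paper's actual contradiction lives entirely inside $W$: it compares the flag $\{[W,Q_X^j]\}$ with the flag $\{W_i\}$, $W_j=[W,Q_Y^{2t-1-j}]$, uses the duality $W_r/W_{r-1}\cong (W_{r-1}^\perp/W_r^\perp)^*$ and the odd count of multiplicity-one weights restricting to $\mu|_{T_X\cap L_X'}$ to force such a weight into the middle quotient $W_t/W_{t-1}$, and then observes that the only dominant $T_X\cap L_X'$-weights there are $0$, the first fundamental weight, or (for $m=4$) the spin weight --- forcing $\omega\in\{0,\omega_1\}$ or $2d_{m-1}+d_m=1$, all impossible. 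Two further slips: the claim that non-trivial projection of $B_{m-1}$ into $L_t$ makes its projections into $L_1,\dots,L_{t-1}$ act trivially is unjustified (simple groups embed diagonally into commuting products), and your invocation of Theorem~\ref{Main_result_1} for $B_{m-1}\subset L_t$ presupposes the \emph{usual} embedding, which the paper must and does establish from the flag geometry rather than assume.
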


\proof Suppose the contrary, that is, let $W$ be the natural $KY$-module and suppose that  $W|_X$ is irreducible. 
Then by 
Proposition~\ref{tensorind}, $W|_X$ is tensor indecomposable. If $W|_X$ is not $p$-restricted, then we replace $W|_X$ by a $p$-restricted representation $\rho$ of a  simply-connected cover $\tilde X$ of $X$, and 
consider $W|_{\rho(\tilde X)}$.
For the purposes of the argument, we may replace $X$ by $\rho(\tilde X)$. Let $W=L_X(\omega)$ for $\omega=\sum_{i=1}^md_i\omega_i$, a $p$-restricted dominant weight.

 As in the hypotheses of Proposition~\ref{Main_result_2}, let $P_Y$ be a parabolic subgroup of $Y$ containing $P_X$ with $Q_X\subset R_u(P_Y) = Q_Y$. Fix a maximal torus $T_Y$ of $P_Y$ with $T_X\subset T_Y$. We choose a base $\Pi(Y)=\{\alpha_1,\dots,\alpha_{n+1}\}$ of the root system of $Y$ so that $P_Y$ contains the opposite Borel $B_Y^-$ subgroup with respect to this base.

Now let $\{0\}=W_0\subsetneq W_1\subsetneq \cdots\subsetneq W_{t-1}$ be a flag of totally singular subspaces, such that $P_Y$ is the full stabilizer in $Y$ of this flag; so $P_Y$ is the stabilizer of the flag
$$
\{0\}=W_0\subsetneq\cdots \subsetneq W_{t-1}\subseteq W_{t-1}^\perp
\subsetneq W_{t-2}^\perp\cdots\subsetneq W_0^\perp=W.
$$

Setting $W_{t-1+j}=W_{t-j}^\perp$ for $0\leq j\leq t,$ we have the flag  
$$
\{0\} = W_0\subsetneq\cdots\subsetneq W_{t-1}\subseteq W_t\subsetneq W_{t+1}
\cdots\subsetneq W_{2t-1}=W,
$$
of which $P_Y$ is the full stabilizer. By our hypotheses on $P_Y$, we have that $W_t/W_{t-1}$ is an orthogonal space of dimension at least 8 (so $W_{t-1}\ne W_t$) and we have $L_t=(\Isom(W_t/W_{t-1}))'$, a group of type $D_m$ for $m\geq 4$. Note that $W_j = [W,Q_Y^{2t-1-j}]$, for all $j\geq 0$.

Recall that by hypothesis, $L_X'$ is of type $B_{m-1}$. If $m>4$, $L_X'$ is the stablizer in $L_t$ of a non-singular $1$-space of $W_t/W_{t-1}$. Note that if $m=4$, so $L_X'$ is of type $B_3$ and $(\Isom(W_t/W_{t-1}))'=D_4$, it may be that $L_X'$ acts irreducibly on $W_t/W_{t-1}$ as a spin module. We will take care to consider the latter possibility in what follows.

We need to see how the two flags which are stabilized by $P_X$ are related, i.e. the flag of subspaces $\{[W,Q_X^j]\}_{j\geq 0}$ and the flag of subspaces $\{W_i\}_{i\geq 0}$. We first show that $e$ (as defined in Lemma \ref{levels} \ref{levels_3}) is even. Indeed, suppose the contrary. Then $P_X$ lies in the stabilizer in $Y$ of $[W,Q_X^{\frac{e+1}{2}}]$, a totally singular subspace of dimension $\frac{1}{2}\dim W$ (using Lemma \ref{levels} \ref{levels_7}). Set $P=QL$ to be this stabilizer, a maximal parabolic subgroup of $Y$ with Levi factor $L$ of type $A_n$ and $Q=R_u(P)$. Consider now the action of $L_X'$ and $L'$ on the commutator quotient $V/[V,Q]$. By \cite{smith}, each group acts irreducibly. But given that the restriction of $\lambda$ to $T_Y\cap L'$ has at least two non-zero labels, we see that there are no compatible configurations coming from Table 1 of \cite{seitz}. (Here we are invoking Hypothesis \ref{inductive_hypothesis} (ii).) Hence $e$ is even as claimed. In particular, the weight $\mu$ of (\ref{mu}) exists and Lemma \ref{lem_mu} applies.

Choose $s$ minimal such that $[W,Q_X^{(e/2)}]\subseteq W_s$. Then $s\geq t$ because $[W,Q_X^{(e/2)}]$ is not totally singular by Lemma \ref{levels} \ref{levels_6}. If $s=t$, then $[W,Q_X^{(e/2)}]\subseteq W_t$ and $[W,Q_X^{(e/2)}]$ is not contained in $W_{t-1}$. If $W_\mu\not\subset W_{t-1}$, then
 $\mu|_{L_X'\cap T_X}$ occurs as a weight in the quotient module $(W_t/W_{t-1})|_{L_X'}$.
But the only dominant $T_X\cap L_X'$ weights in this module are either the zero weight or the first fundamental dominant weight, or $m=4$ and $W_t/W_{t-1}$ is the spin module for $L_X'$. But comparing each of these weights and  $\mu|_{T_X\cap L_X'}$ (see (\ref{mu})),  we see that either $\omega=0$, or $\omega=\omega_1$, or $m=4$ and $\mu$ affords the spin module. The first two possibilities are not consistent with our assumption that $X$ acts 
irreducibly on the natural module for $Y$.  In the last case, when $m=4$ and $\mu$ affords the spin module for $L_X'$, we deduce that $2d_{m-1}+d_m=1$, contradicting the fact that $d_m$ is even. Hence we have $W_\mu\subseteq W_{t-1}$. Choose $r$ minimal such that $W_\mu\subseteq W_r$. Then $\mu$ occurs in the quotient module $W_{r}/W_{r-1}$, which is 
dual to $W_{r-1}^\perp/W_r^\perp$, and so the weight $\mu|_{T_X\cap L_X'}$ occurs here as well. But by Lemma \ref{lem_mu}, there are an odd number of weights, each of which is of multiplicity $1,$ whose restriction to $T_X\cap L_X'$ is $\mu$ and so there exists $\nu\in X(T_X)$ such that $\nu|_{T_X\cap L_X'} = \mu$ and $W_\nu\subseteq W_t$ with $W_\nu\not\subset W_{t-1}$. Now we conclude as above by comparing $\nu|_{T_X\cap L_X'}$ and the weights occurring in $(W_t/W_{t-1})|_{L_X'}$.

Now consider the case that $s>t$. Then $[W,Q_X^{(e/2)+j}]\subseteq W_{s-j}$ is an $L_X$-invariant subspace. In particular, taking $j=s-t$ we have a totally singular $L_X'$-invariant subspace of $W_t$. But since the image of $L_X'$ in $\Isom(W_t/W_{t-1})$ does not lie in a proper parabolic we see that in fact $[W,Q_X^{(e/2)+s-t}]\subseteq W_{t-1}$. Then we have $W_{t-1}^\perp\subset [W,Q_X^{(e/2)+s-t}]^\perp$ by Lemma \ref{levels} \ref{levels_7}, which implies that
$W_t\subseteq [W,Q_X^{(e/2)-s+t+1}]$.

Now consider the inclusions 
$$
[W,Q_X^{(e/2)+s-t}]\subseteq W_{t-1}\subseteq W_t\subseteq [W,Q_X^{(e/2)-s+t+1}].
$$
We again ask where our weight $\mu$ occurs. If $W_\mu\subseteq W_t$ and $W_\mu\not\subset W_{t-1}$, we can argue as before. If $W_\mu\cap W_t=\{0\}$ (recall $\dim W_\mu=1$), then $\mu$ occurs as a weight in the quotient module $[W,Q_X^{(e/2)-s+t+1}]/W_t$ and therefore must also occur in the dual, which is the quotient module $W_{t-1}/[W,Q_X^{(e/2)+s-t}]$ and now we argue as above to see that there exists $\nu$ whose restriction to $T_X\cap L_X'$ is $\mu$ and occurring in the quotient $W_t/W_{t-1}$, leading to a  contradiction as above. This completes the proof of the proposition.
\endproof

\noindent{\bf{Case 2:  $X=F_4$:}} \medbreak

\begin{prop}\label{F4-case} 
Assume the hypotheses of  Proposition \textnormal{\ref{Main_result_2}} with  $X=F_4$. Then the  natural $KY$-module $W$ is a reducible $KX$-module.
\end{prop}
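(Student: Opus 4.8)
The plan is to argue by contradiction, mirroring the proof of Proposition~\ref{Bm-case}. Suppose $W|_X$ is irreducible; then by Proposition~\ref{tensorind} it is tensor indecomposable, so (untwisting by a Frobenius morphism if necessary, $F_4$ being simply connected) we may write $W=L_X(\omega)$ for a $p$-restricted dominant weight $\omega=\sum_{i=1}^4 d_i\omega_i$, and $W$ carries an $X$-invariant non-degenerate quadratic form since it is self-dual. I would fix the maximal parabolic $P_X=L_XQ_X$ of $X=F_4$ whose Levi factor $L_X'$ is of type $B_3$, namely the one obtained by removing the terminal short root $\alpha_4$, and measure levels by the coefficient of $\alpha_4$ exactly as in Definition~\ref{def:levels}. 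The statements of Lemma~\ref{levels} then go through for this parabolic, since their proofs rely only on \cite[Section 27]{Humphreys2}, \cite{smith} and self-duality, and so are insensitive to the type of $X$.

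The first simplification relative to the $B_m$-case is that the maximum level $e$ is now \emph{automatically} even. Indeed, since the longest element $w_0$ acts as $-1$ on $F_4$ we have $-w_0\omega=\omega$, so $e$ is the coefficient of $\alpha_4$ in $2\omega$; as each fundamental weight $\omega_i$ has an \emph{integral} $\alpha_4$-coefficient (namely $2,4,3,2$ for $i=1,2,3,4$), this coefficient equals $2(2d_1+4d_2+3d_3+2d_4)$, which is even. This is precisely the point where the proof of Proposition~\ref{Bm-case} invoked the auxiliary type-$A_n$ parabolic comparison, the half-integral coefficient of $\beta_1$ in the spin weight $\omega_m$ there forcing a genuine parity check; in type $F_4$ that step, and hence the appeal to Hypothesis~\ref{inductive_hypothesis}(ii), is unnecessary. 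Consequently the middle weight $\mu$ at level $e/2$ is defined, the level-$e/2$ weight space is non-degenerate by Lemma~\ref{levels}\ref{levels_6}, and the analogue of Lemma~\ref{lem_mu}, asserting that an odd number of $T_X$-weights, each of multiplicity one, restrict to $\mu|_{T_X\cap L_X'}$, can be established by the same elementary weight bookkeeping, now carried out in the root system of $F_4$.

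With these tools in place I would repeat, essentially verbatim, the flag comparison at the end of the proof of Proposition~\ref{Bm-case}. Let $P_Y$ and its flag $\{W_i\}$ be as in the hypotheses of Proposition~\ref{Main_result_2}, so that $L_t=(\Isom(W_t/W_{t-1}))'$ is of type $D_4$ and $L_X'=B_3$ projects non-trivially into it; thus $W_t/W_{t-1}$ is the natural $8$-dimensional $D_4$-module restricted to $B_3$. Comparing the commutator flag $\{[W,Q_X^i]\}$ with $\{W_i\}$ via Lemma~\ref{levels}\ref{levels_7} and locating where the level-$e/2$ weight $\mu$ occurs, one is forced to conclude that $\mu|_{T_X\cap L_X'}$ appears as a weight of $(W_t/W_{t-1})|_{L_X'}$. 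But the only dominant weights occurring there are $0$ and $\omega_1$ in the standard embedding $B_3\subset D_4$ (where the restriction is $7+1$), or the single weight $\omega_3$ in the exceptional triality embedding (where $B_3$ acts irreducibly on the $8$-space as its spin module). Matching $\mu|_{T_X\cap L_X'}$, which by the $F_4$-analogue of $(\ref{mu})$ involves all of $d_1,\dots,d_4$, against this very short list forces $\omega$ to be $0$, $\omega_1$, or a weight whose $B_3$-layer affords the spin module; each possibility is incompatible with $X=F_4$ acting irreducibly and tensor indecomposably on the even-dimensional natural module $W$ (recall $\dim W=2n+2\geq 10$ since $n\geq 4$), and this contradiction shows $W|_X$ is reducible.

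The main obstacle lies in the weight combinatorics hidden in the middle steps: one must pin down the explicit expressions for $\mu$ and for $\mu|_{T_X\cap L_X'}$ in terms of $d_1,\dots,d_4$ and re-prove the counting statement of Lemma~\ref{lem_mu} in type $F_4$, and then verify that each member of the short list of admissible $\omega$ genuinely fails (being too small to be a faithful irreducible orthogonal module, or making $F_4$ non-proper or the congruence conditions on $M_t$ untenable). As in the $m=4$ subcase of Proposition~\ref{Bm-case}, the delicate point is the exceptional possibility that $L_X'=B_3$ sits inside $L_t=D_4$ via triality as the spin module rather than as the stabilizer of a non-singular vector, since there the list of dominant weights occurring in $W_t/W_{t-1}$ changes and must be handled separately.
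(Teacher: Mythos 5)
Your plan diverges from the paper's proof, which does not mirror Proposition~\ref{Bm-case} at all: the paper works with the \emph{other} end of the diagram, the maximal parabolic of $F_4$ with Levi $L_X'$ of type $C_3$ (removing $\beta_1$), exhibits a single explicit weight $s_1s_2s_3s_4s_2s_3(\omega-d_1\beta_1-d_2\beta_2)$ at level $e/2$ with non-zero dominant restriction to $T_X\cap L_X'$, concludes that the middle quotient is non-degenerate of dimension at least $6$, so that $C_3$ projects non-trivially into a factor of type $D_\ell$, $\ell\geq 4$, of the stabilizer of the commutator flag, and then gets an immediate contradiction from \cite{smith} together with Hypothesis~\ref{inductive_hypothesis}(iii): there is no irreducible triple $(C_3,D_\ell)$ compatible with the congruence conditions in \cite[Table 1]{seitz}. (The presence of the pairs $(C_3,D_n)$ and $(C_3,A_5)$ in Hypothesis~\ref{inductive_hypothesis} is precisely a trace of this mechanism, which your argument never uses.) No middle-weight count and no flag comparison are needed, and $p=2$ is treated separately via \cite[1.6]{seitz} and \cite{luebeck}, because Premet's theorem \cite{premet} excludes $F_4$ in characteristic $2$ --- a case your proposal is silent on, even though the existence of your weight $\mu$ in $W=L_X(\omega)$ is then not automatic.

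The genuine gap in your route is the deferred $F_4$-analogue of Lemma~\ref{lem_mu}, and the problem is that the mechanism which made it true in type $B_m$ is exactly the one your ``simplification'' discards. In Lemma~\ref{lem_mu} the count $d_m+1$ is odd \emph{because} $d_m$ is even, and $d_m$ even is equivalent to $e=2(\sum_{i<m}d_i)+d_m$ even, which is what the $A_n$-parabolic step (Hypothesis~\ref{inductive_hypothesis}(ii)) secures. In $F_4$, as you observe, $e=2(2d_1+4d_2+3d_3+2d_4)$ is even for free --- but precisely for that reason it imposes \emph{no} parity condition on the $d_i$, so no input is available to force your count to be odd. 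Concretely, solving $\langle\nu,\beta_i^\vee\rangle=\langle\mu,\beta_i^\vee\rangle$ for $i=1,2,3$ shows that consecutive solutions differ by $\beta_1+2\beta_2+3\beta_3+2\beta_4$, the highest \emph{short} root: the string jumps two levels at a time (not one, as in type $B_m$), and since this direction is not a simple root, the conjugation trick $s_{m-1}\cdots s_1\nu=\omega-c_m\beta_m$ that gave multiplicity at most one has no evident analogue. Indeed already for $\omega=\omega_4$ the weights of the $26$-dimensional module restricting to $0$ on $T_X\cap L_X'$ are $\pm\theta_s$ and $0$, the latter of multiplicity $2$, for a total multiplicity of $4$ --- both the multiplicity-one claim and the oddness claim fail along such strings. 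Until you specify $\mu$ in $F_4$ explicitly and prove an odd multiplicity-count for it (for all admissible $\omega$, including $p=2$), the pigeonhole at the heart of your flag comparison, and hence the whole argument, is unsupported; the paper's $C_3$-route avoids this difficulty entirely.
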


\proof 
Fix a maximal torus $T_X$ of $X$. Let $\Pi(X)=\{\beta_1,\dots,\beta_4\}$ be a base of the root system of $X$,  $B_X$ the corresponding Borel subgroup containing $T_X$, and $\{\omega_1,\dots,\omega_4\}$ a set of fundamental dominant weights 
chosen with respect to the fixed base $\Pi(X)$. 

Suppose  that $W|_X$ is irreducible. Then by Proposition~\ref{tensorind}, we have $W|_X$ 
tensor-indecomposable and so without loss of  generality we will assume the highest weight $\omega$ is $p$-restricted, that is $\omega=\sum_{i=1}^4 d_i\omega_i$, with $d_i<p$ for all $i$. We will treat the case $p=2$ separately below. 
$$
\text{Assume for now that } p>2.
$$  

Let $P_X$ be the maximal parabolic subgroup of $X$ corresponding to the simple root $\beta_1$, containing the opposite Borel subgroup.  We extend the notion of ``level'' to this setting and rely on \cite[2.3]{seitz}. Note that the maximum level of weights with respect to this parabolic is $e:=2(2d_1+3d_2+2d_3+d_4)$. Consider the flag of totally singular subspaces 
$$
\{0\}=[W,Q_X^{e+1}]\subseteq [W,Q_X^e]\subseteq\cdots \subseteq [W,Q_X^{(e/2)+1}].
$$ 
Then $P_X$ lies in the stabilizer of this flag, which is the stabilizer of the full flag 
$$
\{0\}=[W,Q_X^{e+1}]\subseteq [W,Q_X^e]\subseteq\cdots \subseteq [W,Q_X^{(e/2+1}]\subseteq[W,Q_X^{(e/2)}]\subseteq\cdots\subseteq [W,Q_X]\subseteq W.
$$
The quotient $[W,Q_X^{(e/2)}]/[W,Q_X^{(e/2)+1}]$ is a non-degenerate subspace and we claim that it is non-trivial of dimension at least 6, which then implies that the Levi factor $L_X'$ of type $C_3$ projects non-trivially into $\Isom([W,Q_X^{(e/2)}]/[W,Q_X^{(e/2)+1}])$, a group of type $D_\ell$ with $\ell\geq 4$ (as there is no non-trivial morphism from $C_3$ into $D_\ell$ for $\ell\leq 3$. Indeed, consider the weight
$$
s_1s_2s_3s_4s_2s_3(\omega-d_1\beta_1-d_2\beta_2) = \omega-\tfrac{e}{2}\beta_1-\tfrac{e}{2}\beta_2-(2d_1+4d_2+3d_3+d_4)\beta_3-(2d_2+d_3+d_4)\beta_4.
$$ 
This weight is of level $(e/2)$, occurs in $W$ by \cite{premet},  and affords the dominant $T_X\cap L_X'$ weight $(2d_2+d_3)\omega_2+d_4\omega_3+(2d_1+d_3)\omega_4$, where here we write $\omega_i$ for $\omega_i|_{T_X\cap L_X'}$. Hence $\dim([W,Q_X^{(e/2)}]/[W,Q_X^{(e/2)+1}])\geq 6$ as claimed.

Now consider the action of $X$ on the irreducible module $V=L_Y(\lambda)$ where $\lambda$ 
satisfies the  congruence relations. Since $C_3$ projects non-trivially into the $D_\ell$ factor of $P_Y$, we must have that $C_3$ acts irreducibly on the $D_\ell$ module with the highest weight as given. But there is no such example in \cite[Table 1]{seitz}, giving the desired contradiction. (Here we invoked Hypothesis \ref{inductive_hypothesis} (iii).) 

Finally, consider the case where $p=2$. Since $W|_X$ is tensor-indecomposable, \cite[1.6]{seitz} implies that $\omega=d_1\omega_1+d_2\omega_2$ or $d_3\omega_3+d_4\omega_4$. Moreover, using the graph automorphism of $F_4$, it suffices to consider the second situation, in which case \cite[2.3]{seitz} still applies. Using \cite{luebeck}, we see that the weight lattice of $W|_X$ is as in characteristic 0, in particular, we can exhibit as above in each case a non-zero dominant weight of level $e/2$ and the above argument goes through. If $\omega = \omega_3$, when $e=4$, we take $\omega-2\beta_1-2\beta_2-3\beta_3-\beta_4$; if $\omega=\omega_4$ and $e=2$, take $\omega-\beta_1-\beta_2-\beta_3-\beta_4$; if $\omega=\omega_3+\omega_4$ and $e=6$, take $\omega-3\beta_1-3\beta_2-4\beta_3-2\beta_4$. 
\endproof

\noindent\begin{proof}[{\bf{Proof of Proposition~$\ref{Main_result_2}$}}] We can now complete the proof of Proposition~\ref{Main_result_2}. Let $X$, $Y$ and $V$ be as in the statement of the proposition and assume Hypothesis~\ref{inductive_hypothesis} for all embeddings $H\subset G$ such that $\rank G < \rank Y.$
By Propositions~\ref{reduction}, \ref{semisimple}, \ref{tensorind}, \ref{Bm-case}, \ref{F4-case}, we have that either Proposition~\ref{Main_result_2}(a) holds, or $X$ acts reducibly on $W$ and
 $X\subset B_n\subset Y$. If $X=B_n$, Proposition~\ref{Main_result_2}(b) holds; so assume $X\subsetneq B_n$. The restriction of $V$ to $B_n$ affords a $p$-restricted irreducible $KB_n$-module with highest weight having at least three nonzero coefficients when expressed in terms of the fundamental dominant weights for $B_n$. Moreover, $X$ has a Levi subgroup of type $B_m$, by hypothesis. We now refer to \cite[Table 1]{seitz}, invoking Hypothesis~\ref{inductive_hypothesis} (i), to see that there are no such 
irreducible triples $(X,B_n,\lambda|_{T_{B_n}})$, where $T_{B_n}$ is a maximal torus of $B_n$ lying in the maximal torus $T_Y$. 
This completes the proof of the result.\end{proof}

\section{Proof of Theorem \ref{Main_result_3}}\label{exceptional}

Let $ Y$ be a simply connected, simple algebraic group of type $E_n$, $n=6,7,8$, defined 
over $K.$ We start by considering a maximal, closed, connected semisimple subgroup $X$ of $Y$, satisfying the hypotheses of 
Theorem~\ref{Main_result_3}, namely, $X$ has a proper parabolic subgroup whose Levi factor is of type $B_m$, for some $m\geq 3$. 
 Referring to \cite[Theorem 1]{liebeck-seitz}, we see that we must consider the following pairs $(X,Y)$:
\begin{itemize}
\item $(F_4,E_6)$,
\item $(A_1F_4,E_7)$,
\item $(G_2F_4,E_8)$.
\end{itemize}

We start by dealing with the two latter possibilities.

\begin{prop}\label{other_embeddings} The maximal subgroup $A_1F_4\subset E_7$ acts reducibly on all non-trivial irreducible 
$KE_7$-modules. The maximal subgroup $G_2F_4\subset E_8$ acts  reducibly on all non-trivial irreducible $KE_8$-modules.
\end{prop}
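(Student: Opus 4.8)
The plan is to argue by contradiction, handling the two embeddings simultaneously. Write $X=X_1F_4$, where $X_1=A_1$ when $Y=E_7$ and $X_1=G_2$ when $Y=E_8$, and suppose that $X$ acts irreducibly on a non-trivial irreducible module $V=L_Y(\lambda)$. Since $X$ is a commuting product of simple groups and $K$ is algebraically closed, $V|_X\cong M_1\otimes M_2$ for an irreducible $KX_1$-module $M_1$ and an irreducible $KF_4$-module $M_2$; moreover both tensor factors are non-trivial, for a positive-dimensional factor acting trivially would lie in the kernel of the representation $\rho:Y\to\mathrm{GL}(V)$, a finite central subgroup of $Y$, which is impossible. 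In particular $V|_X=L_X(\omega)$ for $\omega=\mu_1+\mu_2$, where $\mu_i$ is the highest weight of $M_i$.

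First I would produce the parabolic configuration demanded by the hypothesis of Theorem~\ref{Main_result_3}. Taking inside $F_4$ the maximal parabolic with Levi factor of type $B_3$ and combining it with a Borel subgroup of $X_1$ gives a proper parabolic $P_X=Q_XL_X$ of $X$ with $L_X'$ of type $B_3$. Using \cite[Section 2]{seitz} one obtains a parabolic $P_Y=Q_YL_Y$ of $Y$ with $P_X\subseteq P_Y$ and $Q_X\subseteq Q_Y$, and then \cite[2.1]{seitz} shows that $L_X'$ acts irreducibly on the commutator quotient $V/[V,Q_Y]$, an irreducible $KL_Y'$-module whose highest weight is the restriction of $\lambda$ to a maximal torus of $L_Y'$. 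Writing $V/[V,Q_Y]=M_1'\otimes\cdots\otimes M_s'$ over the simple factors $L_1,\dots,L_s$ of $L_Y'$, the simple group $L_X'=B_3$ projects non-trivially into a simple factor $L_t$ and acts irreducibly on $M_t'$.

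The core of the argument, following the pattern of Proposition~\ref{F4-case}, is then to exploit the level filtrations of $V|_X$ with respect to $Q_X$ and of $V$ with respect to $Q_Y$. Since $-w_0=1$ in both $E_7$ and $E_8$, the module $V$ is self-dual, so the machinery of Lemma~\ref{levels} applies to $L_X(\omega)$; by producing an explicit dominant weight at the middle level I would bound the dimension of $M_t'$ from below and force $L_t$ to be a classical factor of type $A_\ell$ or $D_\ell$ with $\ell$ large, carrying a highest weight with at least two non-zero labels. Invoking Hypothesis~\ref{inductive_hypothesis}(i),(ii) when $L_t$ is of type $A_\ell$ or $B_\ell$, and Theorem~\ref{Main_result_1} for any $B_3\subset D_4$ configuration, I would conclude that no irreducible triple $(B_3,L_t,M_t')$ arises that is compatible with the remaining constraints, in particular those imposed by the factor $X_1$ and the other tensor factors $M_i'$. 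This contradiction would establish the proposition.

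The step I expect to be the main obstacle is the explicit embedding bookkeeping: computing the restrictions to $T_X$ of the simple roots and fundamental weights of $Y$, identifying $L_t$ and the precise highest weight of $M_t'$, and carrying out the middle-level dimension estimate that guarantees $L_t$ is large enough to bring the inductive hypothesis to bear. As in the $p=2$ discussion of Proposition~\ref{F4-case}, the small characteristics $p=2$ and $p=3$, where the relevant weight lattices can collapse, will need to be treated separately using \cite{luebeck}.
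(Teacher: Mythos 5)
Your proposal has a genuine gap at its core: the ``level filtration'' machinery you invoke does not transfer from the classical to the exceptional setting, and the step where all the work lies is deferred rather than carried out. In Proposition~\ref{F4-case} the levels argument is applied to the \emph{natural} module $W$ of $Y=D_{n+1}$: there, parabolic subgroups of $Y$ are stabilizers of isotropic flags in $W$, the factor $L_t$ is literally $(\Isom(W_t/W_{t-1}))'$, and a dimension bound on the middle-level quotient of $W$ forces $L_t$ to be a large orthogonal group. For $Y=E_7$ or $E_8$ none of this exists: the simple factors of a Levi subgroup are dictated by subsets of the simple roots of $Y$, not by dimensions of filtration quotients of $V$, so ``producing an explicit dominant weight at the middle level'' of $V|_X$ cannot ``force $L_t$ to be a classical factor of type $A_\ell$ or $D_\ell$ with $\ell$ large.'' (Your appeal to Lemma~\ref{levels} is also flawed on its own terms: self-duality of $V$ when $-w_0=1$ gives a bilinear form which for $E_7$ may well be symplectic, whereas Lemma~\ref{levels} requires an invariant \emph{quadratic} form.) Finally, your concluding claim that ``no irreducible triple $(B_3,L_t,M_t')$ arises that is compatible'' is asserted, not proved, and it is false as a blanket statement: the $B_3$ Levi of the $F_4$ factor sits inside a $D_4$ Levi of $Y$ as the usual $B_3\subset D_4$, where by Theorem~\ref{Main_result_1} irreducible configurations (including the new congruence family) genuinely occur --- this is exactly why the analogous analysis for $F_4\subset E_6$ (Lemma~\ref{thm2_red} and Theorem~\ref{F_4_in_E_6}) requires delicate weight-multiplicity computations and still produces surviving examples. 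Ruling out the compatible candidates is the entire content of the proposition, and your sketch contains no mechanism for doing so.

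For comparison, the paper's proof avoids parabolic induction entirely and is much more elementary. Since $V|_X\cong M_1\otimes M_2$ (your first paragraph is correct and matches the paper), the $F_4$ factor acts \emph{homogeneously} on $V$. Using that $F_4$ sits in an $E_6$ Levi factor of $Y$, an application of \cite{smith} identifies one $F_4$-composition factor, of highest weight $\omega=a_2\omega_1+a_4\omega_2+(a_3+a_5)\omega_3+(a_1+a_6)\omega_4$. Homogeneity then forces $a_7=0$ (and $a_8=0$ for $E_8$), since otherwise $\lambda-\alpha_7$ would afford a non-isomorphic $F_4$-composition factor; and taking $i_0$ maximal with $a_{i_0}\neq 0$, an explicit weight of the form $\lambda-\alpha_{i_0}-\cdots-\alpha_7$ (adapted to the cases $i_0=1,2$ or $i_0\geq 3$) always yields a second $F_4$-composition factor with highest weight different from $\omega$, contradicting homogeneity. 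If you want to salvage your approach, you would need to replace the middle-level step with explicit root-restriction bookkeeping for the embeddings $A_1F_4\subset E_7$ and $G_2F_4\subset E_8$ and then eliminate each surviving triple by hand; the homogeneity argument renders all of that unnecessary.
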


\begin{proof}We assume the contrary. Let $X=M_1F_4\subset Y$ be a maximal subgroup of $Y=E_n$, for $M_1=A_1$, respectively $G_2$ and $n=7$, respectively 8. The factor $F_4$ is embedded in the usual way as a maximal subgroup of an $E_6$ Levi factor of $Y$. Assume that $X$ acts irreducibly on $V=L_Y(\lambda)$ where $\lambda$ is a non-zero $p$-restricted dominant weight. Adopting the usual notation as in previous results, we set $\lambda=\sum_{i=1}^n a_i\lambda_i,$ where $\{\lambda_1,\dots,\lambda_n\}$ are the fundamental dominant weights with respect to a fixed choice of base for the root system of $Y$. Since $X$ acts irreducibly on $V$, $F_4$ acts homogeneously. Let $\{\omega_1,\omega_2,\omega_3,\omega_4\}$ be a set of fundamental dominant  weights for $F_4$ again with respect to a fixed choice of base of the root  system which is compatible with the given choice for $Y$. By \cite{smith} we have one $F_4$ composition factor of $V$ with highest weight $\omega=a_2\omega_1+a_4\omega_2+(a_3+a_5)\omega_3+(a_1+a_6)\omega_4$. The homogeneity of $V|_{F_4}$ implies that $a_7=0$, and $a_8=0$ if $n=8,$ as otherwise $\lambda-\alpha_7$ (resp. $\lambda-\alpha_7-\alpha_8$) would afford the highest weight of a composition factor different from $L_X(\omega).$ Now let $i_0$ be maximal such that $a_{i_0}\ne 0$ (such exists since $\lambda\ne 0$). If $i_0=2$, then $\lambda-\alpha_2-\alpha_4-\alpha_5-\alpha_6-\alpha_7$ affords an $F_4$ composition factor of $V$ not isomorphic to that already given. Hence $i_0\ne 2$. If $i_0=1$, then $\lambda-\alpha_1-\alpha_3-\alpha_4-\alpha_5-\alpha_6-\alpha_7$ has the same property. For all other cases, we take $\lambda-\sum_{i=i_0}^7\alpha_i$, which again affords an $F_4$-composition factor with highest weight different from $\omega$. This is the final contradiction.
\end{proof}

The remainder of this section is devoted to the proof of  the following result, first proven by Testerman \cite[Theorem 5.0 (i)]{test} under the assumption that \cite[Table $1,$  $\mbox{IV}1,$ $\mbox{IV}1'$]{seitz} formed a complete family of irreducible triples for the usual embedding $B_n\subset D_{n+1}$.

\begin{thm}\label{F_4_in_E_6}
Let $Y$ be a simply connected, simple algebraic group of type $E_6$ over $K$ and let $X$ be the subgroup of type $F_4,$ embedded in $Y$ in the usual way. Also let $V=L_Y(\lambda)$ be a non-trivial, irreducible  $KY$-module having $p$-restricted highest weight $\lambda\in X^+(T_Y).$ Assume Hypothesis~$\ref{inductive_hypothesis}$ for all embeddings $H\subset G$ with $\rank(G)<\rank(Y).$ Then $X$ acts irreducibly on $V$ if and only if one of the following holds, where we give $\lambda$ up to graph automorphisms.
\begin{enumerate}
\item $\lambda=(p-3)\lambda_1,$ with $p>3.$
\item $\lambda=\lambda_1+(p-2)\lambda_3,$ with $p>2.$ 
\end{enumerate}
\end{thm}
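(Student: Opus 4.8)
The plan is to prove both directions, disposing of the \emph{if} direction quickly and concentrating on the \emph{only if} direction, which is where the enlarged family of Theorem~\ref{Main_result_1} forces genuinely new work. For the \emph{if} direction the two weights are already known to give irreducible restrictions by \cite[Main Theorem]{test}; alternatively, writing $\omega=\lambda|_{T_X}$, one just checks $\dim L_Y(\lambda)=\dim L_X(\omega)$ from the tables of \cite{luebeck}. Here, for $X=F_4=Y^\sigma$, the relevant restriction data is $\lambda_2|_{T_X}=\omega_1$, $\lambda_4|_{T_X}=\omega_2$, $\lambda_3|_{T_X}=\lambda_5|_{T_X}=\omega_3$, $\lambda_1|_{T_X}=\lambda_6|_{T_X}=\omega_4$, so $\omega=a_2\omega_1+a_4\omega_2+(a_3+a_5)\omega_3+(a_1+a_6)\omega_4$; thus $(p-3)\lambda_1$ restricts to $(p-3)\omega_4$ and $\lambda_1+(p-2)\lambda_3$ to $(p-2)\omega_3+\omega_4$, and in each case the dimensions agree.

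For the \emph{only if} direction, assume $X=F_4$ acts irreducibly on $V=L_Y(\lambda)$, so $V|_X\cong L_X(\omega)$ and hence for every $T_X$-weight $\chi$ one has $m_{L_X(\omega)}(\chi)=\sum_{\nu|_{T_X}=\chi}m_V(\nu)$. First I would run a homogeneity reduction. Dually to the weight restrictions, $\alpha_3|_{T_X}=\alpha_5|_{T_X}=\beta_3$ and $\alpha_1|_{T_X}=\alpha_6|_{T_X}=\beta_4$. Applying the multiplicity identity to $\chi=\omega-\beta_4$, whose only preimages under restriction are $\lambda-\alpha_1$ and $\lambda-\alpha_6$, and noting $m_{L_X(\omega)}(\omega-\beta_4)\leq 1$ while $\lambda-\alpha_i$ has multiplicity $1$ in $V$ exactly when $a_i\neq 0$, I obtain $a_1a_6=0$; the same argument with $\chi=\omega-\beta_3$ gives $a_3a_5=0$. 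Up to the graph automorphism $\sigma$ I may then assume $a_6=0$, with either $a_5=0$ or $a_3=0$.

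The heart of the proof is a comparison of two parabolic embeddings through \cite[2.1]{seitz}. Taking $P_Y\subset Y$ with Levi factor $L_Y'=D_4=\langle\alpha_2,\alpha_3,\alpha_4,\alpha_5\rangle$ (deleting $\alpha_1,\alpha_6$) and $P_X\subset F_4$ with $L_X'=B_3=\langle\beta_1,\beta_2,\beta_3\rangle$, one checks $P_X\subset P_Y$ with $R_u(P_X)\subset R_u(P_Y)$ and that $B_3\subset D_4$ is the usual embedding; hence $B_3$ acts irreducibly on $V/[V,Q_Y]=L_{D_4}(\mu)$, where, in the Bourbaki labelling of $D_4$ (central node $2$, spin nodes $3,4$), $\mu=a_2\lambda_1+a_4\lambda_2+a_3\lambda_3+a_5\lambda_4$. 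Theorem~\ref{Main_result_1} with $n=3$ now applies: either $\mu=0$, forcing $a_2=a_3=a_4=a_5=0$ and $\lambda=a_1\lambda_1$, or, up to $\sigma$, $a_5=0$, $a_3\neq 0$, and the congruence conditions hold among $a_2,a_3,a_4$. This is exactly the point at which the argument diverges from \cite{test}: the corrected, larger family of Theorem~\ref{Main_result_1} now admits new weights $\mu$, hence new candidate $\lambda$ carrying nonzero $a_2$ or $a_4$ subject to congruences. To constrain the remaining coefficients I would use a second embedding, $L_Y'=A_5=\langle\alpha_1,\alpha_3,\alpha_4,\alpha_5,\alpha_6\rangle$ (deleting $\alpha_2$) with $L_X'=C_3=\langle\beta_2,\beta_3,\beta_4\rangle$, realizing $C_3=\Sp_6\subset\SL_6=A_5$; then \cite[2.1]{seitz} together with Hypothesis~\ref{inductive_hypothesis}(iv) forces $\mu'=\lambda|_{T_X\cap A_5}$ to occur among the $(C_3,A_5)$-triples of \cite[Table 1]{seitz}.

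Combining the two Levi restrictions should cut the candidate weights down to support $\{1\}$ in the first case and $\{1,3\}$ in the second, after which the exact coefficients are read off by matching $\dim L_Y(\lambda)$ against $\dim L_X(\omega)$ via \cite{luebeck}: for $\lambda=a_1\lambda_1$ equality holds iff $a_1=p-3$ and $p>3$, and for $\lambda=a_1\lambda_1+a_3\lambda_3$ iff $a_1=1$, $a_3=p-2$ and $p>2$. The hard part, and the only genuinely new feature relative to \cite{test}, will be the elimination of the spurious candidates produced by the congruence family of Theorem~\ref{Main_result_1}, namely those with $a_2$ or $a_4$ nonzero, or with $a_3$ left unconstrained by the $D_4$-analysis. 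I expect the cleanest route is to intersect the $A_5$-congruences satisfied by $\mu'$ with the $D_4$-congruences satisfied by $\mu$, reducing to a short finite list of $p$-parametrised shapes whose $E_6$-dimensions, computed from \cite{luebeck}, then strictly exceed $\dim L_X(\omega)$ and are thereby ruled out.
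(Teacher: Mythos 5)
Your reduction is sound as far as it goes, and it does retrace the paper's Lemma~\ref{thm2_red}: the paper likewise plays the embedding $B_3\subset D_4$ (Levi $\langle\alpha_2,\alpha_3,\alpha_4,\alpha_5\rangle$, with Theorem~\ref{Main_result_1}) against $C_3\subset A_5$ (Levi $\langle\alpha_1,\alpha_3,\alpha_4,\alpha_5,\alpha_6\rangle$, with Hypothesis~\ref{inductive_hypothesis}(iv)), plus the computations from the proof of \cite[(5.4)]{test}. But your final step misjudges what this sieve produces. Intersecting the $D_4$-congruences with the $A_5$-constraints does \emph{not} leave a list of shapes that can then be discarded: it leaves, besides the two genuine examples, exactly the family $\lambda=a\lambda_2+a\lambda_3+b\lambda_4$ with $a,b>0$ and $a+b=p-1$ (case (i) of Lemma~\ref{thm2_red}). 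This family \emph{satisfies} every Levi-level condition -- the $D_4$-restriction $a\lambda_1+b\lambda_2+a\lambda_3$ meets the congruences of Theorem~\ref{Main_result_1} precisely because $a+b+1\equiv 0$ and the $A_5$-restriction $a\lambda_2+b\lambda_3$ is an admissible $(C_3,A_5)$-triple -- so no amount of combining parabolic embeddings can kill it. That is the entire point of the new work in this section.

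Your proposed coup de gr\^ace, comparing $\dim L_Y(\lambda)$ with $\dim L_X(\omega)$ via \cite{luebeck}, is unworkable for this family: it is infinite and $p$-parametrised, the dimensions grow without bound, L\"ubeck's tables cover only small-dimensional modules, and there is no closed formula for $\dim L_G(\lambda)$ in characteristic $p$ for such weights. (The same objection applies to your suggested table check in the \emph{if} direction for $(p-3)\lambda_1$; there the correct route is the one you also name, \cite[5.6, 5.7]{test}.) The paper instead eliminates case (i) by a local multiplicity comparison at the single $T_X$-weight $\omega'=\omega-\beta_1-\beta_2-2\beta_3-\beta_4$, where $\omega=a\omega_1+b\omega_2+a\omega_3$: a lower bound $\m_{V|_X}(\omega')\geq 7-\delta_{a,1}$, obtained from multiplicities of $\lambda-\alpha_1-\alpha_2-2\alpha_3-\alpha_4$ and $\lambda-\alpha_2-\alpha_3-\alpha_4$ computed in $A_4$- and $A_3$-Levi subgroups of $E_6$ (Proposition~\ref{Weight_multiplicities}), set against an upper bound $\m_{L_X(\omega)}(\omega')\leq 6-\delta_{a,1}$, obtained by locating composition factors of the Weyl module $V_X(\omega)$ below $\omega'$ via the Jantzen $p$-sum formula machinery of Theorem~\ref{How_to_determine_contributions} (Lemmas~\ref{C.f. of [a,b,a] for type B} and \ref{C.f. of [a,b] for type B}, with a four-case split according to $a=1$ or $a=p-2$). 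This Weyl-module analysis is the genuinely new content of the proof, and your proposal contains no substitute for it.
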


The proof relies on some detailed knowledge of the structure of certain Weyl modules for $E_6$ and $F_4.$ Section \ref{Tools} below provides some results based on the Jantzen $p$-sum formula. In Section \ref{Computing_multiplicities}, we apply the methods from Section \ref{Tools} to various Weyl modules in order to obtain insight on their structure, as well as information on certain weight multiplicities in the corresponding irreducible quotient. These results shall then prove useful in Section \ref{3rd_result:conclusion}, in which we conclude by showing  that Theorem \ref{F_4_in_E_6} holds. Finally, at the end of the section, we see how these results lead to a proof of Theorem \ref{Main_result_3}.

\subsection{Understanding Weyl modules}\label{Tools}     

Let $G$ be a semisimple algebraic group over $K,$ let $B$ be a Borel subgroup of $G$ containing a fixed maximal torus $T.$ Let $\Pi = \{\alpha_1,\ldots,\alpha_n\}$ denote the corresponding base for the root system $\Phi=\Phi^+\sqcup \Phi^-$ of $G,$ and let $\lambda_1,\ldots,\lambda_n$ denote the corresponding fundamental dominant weights for $T.$ Let $\rho$ denote the half-sum of all positive roots in $\Phi,$ or equivalently, the sum of all fundamental dominant weights. Also for $\lambda,\mu\in X^+(T)$ such that $\mu \prec \lambda,$ define 
$$
\mbox{d}(\lambda,\mu)=2(\lambda+\rho,\lambda-\mu)-(\lambda-\mu,\lambda-\mu).
$$
The following corollary to the strong linkage principle \cite{Andersen} gives a necessary condition for $\mu$ to afford the highest weight of a $KG$-composition factor of $V_G(\lambda),$ in the case where $p>2$ and $G$ is  not of type $G_2.$ We refer the reader to \cite[6.2]{seitz} for a proof.

\begin{prop}\label{Corollary to The linkage principle}
Assume $p>2$ and let $G$ be a simple algebraic group of type different from $G_2.$  Also let $\lambda$ and $\mu$ be as above, and assume the inner product on $\Z \Phi$ is normalized so that long roots have length $1.$ If $\mu $ affords the highest weight of a composition factor of $V_G(\lambda),$ then 
$$
2\textnormal{d}(\lambda,\mu)\in p\Z.
$$
\end{prop}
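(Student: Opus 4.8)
The plan is to reduce the assertion to a single step of the strong linkage principle and then to compute $\mathrm{d}(\lambda,\mu)$ explicitly under each affine reflection. The starting observation is that the quantity in the definition admits the symmetric reformulation
\[
\mathrm{d}(\lambda,\mu)=(\lambda+\rho,\lambda+\rho)-(\mu+\rho,\mu+\rho),
\]
obtained at once by expanding the right-hand side using $\mu+\rho=(\lambda+\rho)-(\lambda-\mu)$. Written this way, $\mathrm{d}$ is visibly additive along chains: for any sequence of weights $\mu=\nu_0,\nu_1,\dots,\nu_r=\lambda$ one has $\mathrm{d}(\lambda,\mu)=\sum_{i=0}^{r-1}\mathrm{d}(\nu_{i+1},\nu_i)$.

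First I would invoke the strong linkage principle \cite{Andersen}: since $\mu$ affords the highest weight of a composition factor of $V_G(\lambda)$, we have $\mu\uparrow\lambda$, so there is a chain $\mu=\nu_0\uparrow\cdots\uparrow\nu_r=\lambda$ of dominant weights in which each consecutive pair is related by a single reflection in the affine Weyl group $W_p$ under the dot action, say $\nu_i=s_{\beta,mp}\cdot\nu_{i+1}$ for some $\beta\in\Phi^+$ and $m\in\Z$. By the additivity just noted it then suffices to prove $2\,\mathrm{d}(\nu_{i+1},\nu_i)\in p\Z$ for one such step. Writing $\lambda'=\nu_{i+1}$, $\mu'=\nu_i$ and $c=\langle\lambda'+\rho,\beta^\vee\rangle-mp$, the dot action gives $\mu'+\rho=(\lambda'+\rho)-c\beta$, with $c\in\Z$ since $\lambda'+\rho$ is a weight and $\beta^\vee$ a coroot. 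Substituting into the reformulation above and using $(\lambda'+\rho,\beta)=\tfrac12\langle\lambda'+\rho,\beta^\vee\rangle(\beta,\beta)$, the routine cancellation yields
\[
\mathrm{d}(\lambda',\mu')=c\,\big(\langle\lambda'+\rho,\beta^\vee\rangle-c\big)\,(\beta,\beta)=c\,m\,p\,(\beta,\beta).
\]

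To conclude I would feed in the normalization. With long roots of squared length $1$, one has $(\beta,\beta)=1$ when $\beta$ is long and $(\beta,\beta)=\tfrac12$ when $\beta$ is short, the latter precisely because in every type other than $G_2$ the ratio of squared root lengths is $2$. Hence $2(\beta,\beta)\in\{1,2\}\subset\Z$, and
\[
2\,\mathrm{d}(\lambda',\mu')=c\,m\,p\cdot 2(\beta,\beta)\in p\Z,
\]
which settles the single-step case and therefore, by additivity, the proposition. The main obstacle is exactly this last normalization step: it is the factor $2(\beta,\beta)$ attached to a short root $\beta$ that drives everything, and it is what forces the exclusion of $G_2$, where a short root has $(\beta,\beta)=\tfrac13$ and the integrality fails. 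I would also be careful about the precise convention for the reflections $s_{\beta,mp}$ in the form of the strong linkage principle being cited, since the arithmetic of $c$ and $m$ depends on it; the hypothesis $p>2$ simply keeps us within the setting of \cite[6.2]{seitz} from which this corollary is extracted, the genuine constraint in the argument being the short-root normalization.
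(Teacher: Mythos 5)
Your proof is correct, and it is essentially the standard argument behind this result: the paper itself gives no proof but defers to \cite[6.2]{seitz}, whose argument is precisely your reduction via the strong linkage principle of \cite{Andersen} to a single affine reflection, the telescoping identity $\mathrm{d}(\lambda,\mu)=(\lambda+\rho,\lambda+\rho)-(\mu+\rho,\mu+\rho)$, and the computation $\mathrm{d}(\lambda',\mu')=cmp(\beta,\beta)$ with $2(\beta,\beta)\in\{1,2\}$ under the stated normalization. Your closing remarks are also on target: the factor $2$ and the exclusion of $G_2$ are exactly accounted for by the short-root length, with $(\beta,\beta)=\tfrac13$ breaking integrality in type $G_2$.
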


Let $\{e^{\mu}\}_{\mu\in X(T)}$ denote the standard basis of the group ring $\Z[X(T)]$ over $\Z.$ The Weyl group $\mathscr{W}$ of $G$ acts on $\Z[X(T)]$ by $we^{\mu}=e^{w\mu},$ $w\in \mathscr{W},$ $\mu\in X(T),$ and we write $\Z[X(T)]^{\mathscr{W}}$ to denote the set of fixed points. The \emph{formal character} of a given $KG$-module $V$ is the linear polynomial $\ch V \in \Z[X(T)]^{\mathscr{W}}$ defined by
$$
\ch V = \sum_{\mu\in X(T)}{\m_V(\mu) e^{\mu}}.
$$
Also, for  $\lambda\in X^+(T),$  we write $$\chi(\lambda)= \ch V_G(\lambda) = \ch H^0(\lambda)$$ (see \cite[II, 2.13]{Jantzen}, for instance). The  \emph{Jantzen $p$-sum formula}  \cite[II, Proposition 8.19]{Jantzen} yields the existence of a filtration $V_G(\lambda)=V^0 \supsetneq V^1 \supseteq \ldots \supseteq V^k \supseteq V^{k+1} =0$ of $V_G(\lambda),$ such that $V^0/V^1\cong L_G(\lambda)$ and such that the sum $\sum_{r=1}^k{\ch V^r} \in \mathbb{Z}[X(T)]^{\mathscr{W}},$ denoted $\nu^c(T_\lambda),$ satisfies certain properties (\emph{loc. cit.}). Throughout this section, we call such a filtration a \emph{Jantzen filtration} of $V_G(\lambda).$ Moreover,  since  $\{\chi(\lambda)\}_{\lambda\in X^+(T)}$ forms a $\Z$-basis of $\Z[X(T)]^{\mathscr{W}} $  (see \cite[II, Remark 5.8]{Jantzen}), there exists $(a_\nu)_{\nu\in X^+(T)}\subset \mathbb{Z}$  such that   
\begin{equation}
\nu^c(T_\lambda)=\sum_{\nu \in X^+(T)}{a_\nu \chi(\nu)}.
\label{nu^c(T_lambda)_in_terms_of_chi(nu)'s}
\end{equation}

Consider a $T$-weight $\mu \in X(T)$ with $\mu \prec \lambda.$ Following \cite[Section 3.2]{Cavallin2}, we now define a ``truncated'' version of $\nu^c(T_\lambda),$ which shall prove useful in computations, by setting
\begin{equation} 
\nu_\mu^c(T_\lambda)= \sum_{\substack{\nu \in X^+(T) \\ \mu \preccurlyeq \nu \prec \lambda}}{a_\nu \chi_\mu (\nu)},
\label{nu_mu^c(T_lambda)_in_terms_of_chi_mu(nu)'s}
\end{equation} 
where the $a_\nu$ $(\nu\in X^+(T))$ are as in \eqref{nu^c(T_lambda)_in_terms_of_chi(nu)'s}, and where for every $\nu \in X^+(T),$ we have $\chi_\mu(\nu)=  \sum_{\substack{ \eta \in X^+(T) \\ \mu \preccurlyeq \eta \preccurlyeq\nu}}{[V_G(\nu),L_G(\eta)]\ch L_G(\eta)}.$  Finally, the latter decomposition yields
\begin{equation}
\nu_\mu^c(T_\lambda)= \sum_{\substack{\xi \in X^+(T) \\ \mu \preccurlyeq \xi \prec \lambda}}{b_\xi \ch L_G(\xi)},
\label{nu_mu^c(T_lambda)_in_terms_of_ch(xi)'s}
\end{equation}
for some $b_\xi \in \Z.$ The following proposition shows how \eqref{nu_mu^c(T_lambda)_in_terms_of_ch(xi)'s} can be used in order to determine the possible composition factors of $V_G(\lambda),$ together with an upper bound for their multiplicity. We refer the reader to \cite[Proposition 3.6]{Cavallin2} for a proof.

\begin{prop}\label{Insight_on_possible_composition_factors} 
Let $\lambda\in X^+(T)$ and consider a $T$-weight $\mu \prec \lambda.$ Also let $\xi \in X^+(T)$ be a dominant weight such that $\mu \preccurlyeq \xi \prec \lambda.$ Then $\xi$ affords the highest weight of a composition factor of $V_G(\lambda)$ if and only if $b_\xi\neq 0$ in \eqref{nu_mu^c(T_lambda)_in_terms_of_ch(xi)'s}. Also  $[V_G(\lambda),L_G(\xi)]\leq b_\xi.$
\end{prop}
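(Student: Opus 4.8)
The plan is to read the coefficients $b_\xi$ off the Jantzen filtration directly, using that passing from $\{\chi(\nu)\}$ to $\{\ch L_G(\eta)\}$ and discarding the components indexed by weights not above $\mu$ is a single well-defined linear operation.

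First I would introduce the $\Z$-linear \emph{truncation} $\tau_\mu$ on $\Z[X(T)]^{\mathscr{W}}$: since $\{\ch L_G(\eta)\}_{\eta\in X^+(T)}$ is a $\Z$-basis, $\tau_\mu$ is defined by keeping precisely those $\ch L_G(\eta)$-components with $\eta\succcurlyeq\mu$. By construction $\chi_\mu(\nu)=\tau_\mu(\chi(\nu))$, and I would note that $\chi_\mu(\nu)=0$ unless $\mu\preccurlyeq\nu$ (any $\eta$ with $\mu\preccurlyeq\eta\preccurlyeq\nu$ forces $\mu\preccurlyeq\nu$). Applying $\tau_\mu$ to \eqref{nu^c(T_lambda)_in_terms_of_chi(nu)'s} and recalling that every $\nu$ with $a_\nu\neq0$ satisfies $\nu\prec\lambda$ — because $\nu^c(T_\lambda)=\sum_{r\geq1}\ch V^r$ involves only composition factors of $V^1=\rad(\lambda)$, all of highest weight $\prec\lambda$ — I would obtain $\tau_\mu(\nu^c(T_\lambda))=\sum_{\mu\preccurlyeq\nu\prec\lambda}a_\nu\chi_\mu(\nu)=\nu_\mu^c(T_\lambda)$, matching \eqref{nu_mu^c(T_lambda)_in_terms_of_chi_mu(nu)'s}. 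Consequently, for every $\xi$ with $\mu\preccurlyeq\xi\prec\lambda$, the coefficient $b_\xi$ in \eqref{nu_mu^c(T_lambda)_in_terms_of_ch(xi)'s} equals the coefficient of $\ch L_G(\xi)$ in the full sum $\nu^c(T_\lambda)$.

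Next I would expand $\nu^c(T_\lambda)=\sum_{r\geq1}\ch V^r$ in the basis $\{\ch L_G(\eta)\}$, which gives
$$
b_\xi=\sum_{r\geq1}[V^r,L_G(\xi)]
$$
for all such $\xi$. Each summand is a non-negative integer, so $b_\xi\geq0$; and since $V^r\subseteq V^1$ for every $r$, we have $b_\xi=0$ if and only if $[V^1,L_G(\xi)]=0$. As $V^0/V^1\cong L_G(\lambda)$ and $\xi\prec\lambda$, the multiplicities satisfy $[V^1,L_G(\xi)]=[V_G(\lambda),L_G(\xi)]$; hence $b_\xi\neq0$ if and only if $\xi$ affords the highest weight of a composition factor of $V_G(\lambda)$, which is the stated equivalence. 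Finally, isolating the term $r=1$ yields
$$
[V_G(\lambda),L_G(\xi)]=[V^1,L_G(\xi)]\leq\sum_{r\geq1}[V^r,L_G(\xi)]=b_\xi,
$$
the asserted bound.

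I expect the only genuinely delicate point to be the first step: one must check that $\tau_\mu$ is truly linear and leaves the $\ch L_G(\xi)$-coefficients with $\xi\succcurlyeq\mu$ untouched while killing exactly the rest, and that no term of $\nu^c(T_\lambda)$ with $\nu\not\prec\lambda$ survives. Both follow from the unitriangularity of the change of basis between Weyl and simple characters together with the fact that $\rad(\lambda)$ has all composition factors strictly below $\lambda$. Once these are settled, the remaining steps are immediate consequences of the defining properties of the Jantzen filtration recorded before \eqref{nu^c(T_lambda)_in_terms_of_chi(nu)'s}.
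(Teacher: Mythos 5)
Your proof is correct and takes essentially the same route as the proof the paper defers to (\cite[Proposition 3.6]{Cavallin2}): expand $\nu^c(T_\lambda)=\sum_{r\geq 1}\ch V^r$ in the basis of simple characters, check via unitriangularity that truncating at $\mu$ yields exactly $\nu^c_\mu(T_\lambda)$, deduce $b_\xi=\sum_{r\geq 1}[V^r,L_G(\xi)]\geq 0$, and conclude from $V^1=\rad(\lambda)$ that $b_\xi\neq 0$ if and only if $[V_G(\lambda),L_G(\xi)]=[V^1,L_G(\xi)]>0$, with $[V_G(\lambda),L_G(\xi)]\leq b_\xi$. The points you flag as delicate (linearity of $\tau_\mu$, vanishing of $\chi_\mu(\nu)$ for $\nu\not\succcurlyeq\mu$, and $\nu\prec\lambda$ for every contributing $\nu$) are precisely the justifications needed, and you supply them correctly, so there is no gap.
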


For $\nu\in X^+(T),$ we call the coefficient $a_\nu$ in \eqref{nu_mu^c(T_lambda)_in_terms_of_chi_mu(nu)'s} the \emph{contribution} of $\nu$ to $\nu^c_\mu(T_\lambda),$ and we say that $\nu$ \emph{contributes} to $\nu^c_\mu(T_\lambda)$ if its contribution is non-zero. Now applying Proposition \ref{Insight_on_possible_composition_factors} for specific weights $\mu,\xi$ with $\mu\preccurlyeq \xi \prec \lambda$ requires the knowledge of the contribution of $\nu$ to $\nu_\mu^c(T_\lambda)$ for each dominant $T$-weight $\mu \preccurlyeq \nu \prec \lambda.$  In certain cases, knowing whether or not a given $T$-weight contributes to $\nu^c_\mu(T_\lambda)$  can be easily determined, as the following result shows. We refer the reader to \cite[Lemma 3.7]{Cavallin2} for a proof.

\begin{lem}\label{no_contribution_for_certain_weights}
Let $\lambda,$ $\mu$ and $\nu$ be as above, with $\nu$ maximal, with respect to the partial order   $\preccurlyeq$, such that $\nu$ contributes to $\nu^c_{\mu}(T_{\lambda}).$ Then $\nu$ affords the highest weight of a composition factor of $V_G(\lambda).$  
\end{lem}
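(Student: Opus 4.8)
The plan is to expand $\nu^c(T_\lambda)$ simultaneously in the basis $\{\chi(\eta)\}$ and in the basis $\{\ch L_G(\eta)\}$ of $\Z[X(T)]^{\mathscr{W}}$, and to read off the coefficient of $\ch L_G(\nu)$. First I would recall that the Jantzen filtration expresses $\nu^c(T_\lambda)=\sum_{r\geq 1}\ch V^r$ as the character of a sum of submodules $V^r\subseteq V^1=\rad V_G(\lambda)$ of $V_G(\lambda)$; in particular, $\nu^c(T_\lambda)$ is a non-negative integral combination of irreducible characters $\ch L_G(\xi)$, with each contributing $\xi$ the highest weight of a composition factor of $V_G(\lambda)$. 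Substituting $\chi(\eta)=\sum_\xi[V_G(\eta),L_G(\xi)]\ch L_G(\xi)$ into $\nu^c(T_\lambda)=\sum_\eta a_\eta\chi(\eta)$ and invoking the linear independence of the $\ch L_G(\xi)$, I find that the coefficient of $\ch L_G(\xi)$ in $\nu^c(T_\lambda)$ equals $m_\xi:=\sum_\eta a_\eta[V_G(\eta),L_G(\xi)]$.

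Next I would observe that passing to the truncated sum costs nothing for the maximal contributing weight. Every $\eta$ with $a_\eta\neq 0$ satisfies $\eta\prec\lambda$, since all such $\eta$ occur among the weights of $\rad V_G(\lambda)$. Hence if some weight $\eta'\succ\nu$ had $a_{\eta'}\neq 0$, then $\mu\preccurlyeq\nu\prec\eta'\prec\lambda$, so $\eta'$ would contribute to $\nu^c_\mu(T_\lambda)$ as well, contradicting the maximality of $\nu$. Thus $\nu$ is maximal, with respect to $\preccurlyeq$, among all weights $\eta$ with $a_\eta\neq 0$.

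It then remains to compute $m_\nu$. A summand $a_\eta[V_G(\eta),L_G(\nu)]$ is non-zero only when $a_\eta\neq 0$ and $\nu\preccurlyeq\eta$, and by the maximality just established this forces $\eta=\nu$. Since $[V_G(\nu),L_G(\nu)]=1$, I obtain $m_\nu=a_\nu\neq 0$. Therefore $L_G(\nu)$ occurs with non-zero multiplicity in $\nu^c(T_\lambda)=\sum_{r\geq 1}\ch V^r$, so it is a composition factor of some $V^r\subseteq V_G(\lambda)$; equivalently, $\nu$ affords the highest weight of a composition factor of $V_G(\lambda)$, as claimed. The only delicate point is the bookkeeping of the previous paragraph, namely ensuring that the truncation conceals no larger contributing weight; beyond that, the argument is simply the triangularity of the $\{\chi(\eta)\}$ basis against the $\{\ch L_G(\eta)\}$ basis together with the identity $[V_G(\nu),L_G(\nu)]=1$.
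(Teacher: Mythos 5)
Your proof is correct and follows essentially the same route as the paper, which at this point simply cites \cite[Lemma 3.7]{Cavallin2}: the triangularity of the transition between the bases $\{\chi(\eta)\}$ and $\{\ch L_G(\eta)\}$, combined with the maximality of $\nu$ and the fact that $\nu^c(T_\lambda)=\sum_{r\geq 1}\ch V^r$ is the character of a direct sum of submodules of $V_G(\lambda)$, forces the coefficient of $\ch L_G(\nu)$ to equal $a_\nu\neq 0$, exactly as you argue. Your bookkeeping step upgrading maximality of $\nu$ in the truncated range $\mu\preccurlyeq\eta\prec\lambda$ to maximality among all weights with $a_\eta\neq 0$ (using that every such $\eta$ lies under $\lambda$, since these weights occur in $\rad V_G(\lambda)$) is the one delicate point, and you handle it correctly.
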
 

Fix $\nu\in X^+(T),$ and recall from \cite[Planches I-IV]{Bourbaki} the description of the simple roots and fundamental dominant weights for $T$ in terms of a basis $\{\varepsilon_1,\ldots,\varepsilon_{d_\Phi}\}$ for a Euclidean space $E$ of dimension $d_\Phi.$   For  $\alpha \in \Phi^+$ and $r\in \mathbb{Z}_{\geq 0}$ such that $1<r<\langle \lambda+\rho, \alpha\rangle$, we write $\lambda+\rho-r\alpha=a_1\varepsilon_1+\cdots+a_{d_\Phi}\varepsilon_{d_\Phi},$ as well as $\nu +\rho =b_1\varepsilon_1+\cdots+b_{d_\Phi}\varepsilon_{d_\Phi},$ following the ideas of \cite{McNinch}. Also following \cite[Section 3.2]{Cavallin2}, we set $A_{\alpha,r}=(a_j)_{j=1}^{d_\Phi}\in \mathbb{Q}^{d_\Phi}$ and $B_{\nu}= (b_j)_{j=1}^{d_\Phi}\in \mathbb{Q}^{d_\Phi}.$  The action of the Weyl group $\mathscr{W}$ of $G$ on the basis $\{\varepsilon_1,\ldots,\varepsilon_{d_\Phi}\},$ described in \cite[Planches I-IV]{Bourbaki}, extends to an action of $\mathscr{W}$ on $\mathbb{Q}^{d_\Phi}$ in the obvious way. (We write $w\cdot A$ for $w\in \mathscr{W},$ $A\in \mathbb{Q}^{d_\Phi}.$) In addition, define the \emph{support} of an element $z\in \Z\Phi$ to be the subset $\mbox{supp}(z)$ of $\Pi$ consisting of those simple roots $\alpha$ such that $c_{\alpha}\neq 0$ in the decomposition $z=\sum_{\alpha\in \Pi}{c_{\alpha}\alpha}.$ Finally, for  $w\in \mathscr{W},$ we write $\det (w)$ for the determinant of $w$ as an invertible linear transformation of $X(T)_{\R}=X(T)\otimes_\Z \R.$ The following result is  our main tool for determining the contribution of $\nu$ to $\nu^c_\mu(T_\lambda),$ for each weight $\nu\in X^+(T)$ with $\mu\preccurlyeq \nu \prec\lambda.$ We refer the reader to \cite[Theorem 3.8]{Cavallin2} for a proof.

\begin{thm}\label{How_to_determine_contributions}     
Let $\lambda\in X^+(T),$ and consider a weight $\mu\in X(T)$ with $\mu\prec\lambda.$ Let $\nu\in X^+(T)$ be such that $\mu\preccurlyeq \nu\prec\lambda.$ Write $I_\nu=\{(\alpha,r)\in \Phi^+\times [2,\langle \lambda+\rho,\alpha\rangle]:\textnormal{supp}(\alpha)=\textnormal{supp}(\lambda-\nu), B_\nu \in  \mathscr{W} \cdot A_{\alpha,r}\},$ and for each pair $(\alpha,r)\in I_\nu,$ choose $w_{\alpha,r}\in \mathscr{W}$ such that $w_{\alpha,r}\cdot A_{\alpha,r}=B_\nu.$ Then the contribution of $\nu$ to $\nu_\mu^c(T_\lambda)$ is given by 
$$
-\sum_{(\alpha,r)\in I_\nu}{\nu_p(r)\det (w_{\alpha,r})},
$$
where for $\ell$  a prime number  and $m\in \Z,$ we write $\nu_\ell(m)$ to denote the greatest integer $r$ such that $\ell^r$ divides $m.$
\end{thm}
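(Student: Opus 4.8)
The plan is to read the coefficient $a_\nu$ of \eqref{nu^c(T_lambda)_in_terms_of_chi(nu)'s} directly off the Jantzen $p$-sum formula after re-expanding each of its terms in the standard basis $\{\chi(\eta)\}_{\eta\in X^+(T)}$ of $\Z[X(T)]^{\mathscr W}$. Recall that the Jantzen $p$-sum formula \cite[II, Proposition 8.19]{Jantzen}, in the reflection-free shape adapted to the present normalisation, expresses $\nu^c(T_\lambda)$ as a $\Z$-linear combination
\[
\nu^c(T_\lambda)=-\sum_{\alpha\in\Phi^+}\sum_{0<r<\langle\lambda+\rho,\alpha\rangle}\nu_p(r)\,\chi(\lambda-r\alpha),
\]
where the inner sum runs effectively only over $r$ divisible by $p$ since $\nu_p(r)=0$ otherwise; this is exactly the form anticipated by the definition of $A_{\alpha,r}$ as the $\varepsilon$-coordinate vector of $\lambda+\rho-r\alpha$. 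As $\{\chi(\eta)\}_{\eta\in X^+(T)}$ is a $\Z$-basis, the coefficient $a_\nu$ is well defined, and since the truncation producing $\nu^c_\mu(T_\lambda)$ only discards the summands $\chi(\eta)$ with $\mu\not\preccurlyeq\eta$, the contribution of a fixed $\nu$ with $\mu\preccurlyeq\nu\prec\lambda$ is literally this same $a_\nu$. Thus it suffices to compute the coefficient of $\chi(\nu)$ in the displayed sum.

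Next I would rewrite each term $\chi(\lambda-r\alpha)$ in the basis using the defining symmetry of the character map, namely $\chi(w\cdot\eta)=\det(w)\,\chi(\eta)$ for all $w\in\mathscr W$ and $\eta\in X(T)$ (dot action $w\cdot\eta=w(\eta+\rho)-\rho$), together with the vanishing $\chi(\eta)=0$ whenever $\eta+\rho$ lies on a reflection hyperplane. Hence $\chi(\lambda-r\alpha)=0$ unless $\lambda+\rho-r\alpha$ is regular, in which case there is a unique $w\in\mathscr W$ carrying it into the open dominant chamber; writing the image as $\nu+\rho$ with $\nu\in X^+(T)$, one has $w\cdot(\lambda-r\alpha)=\nu$ and therefore $\chi(\lambda-r\alpha)=\det(w)\,\chi(\nu)$. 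Passing to the $\varepsilon$-coordinates of \cite[Planches I--IV]{Bourbaki}, the condition ``$\lambda+\rho-r\alpha$ is $\mathscr W$-conjugate to $\nu+\rho$'' becomes precisely $B_\nu\in\mathscr W\cdot A_{\alpha,r}$, the transporting element being the $w_{\alpha,r}$ of the statement. Since $\nu+\rho$ is regular its $\mathscr W$-stabiliser is trivial, so $w_{\alpha,r}$ is unique and $\det(w_{\alpha,r})$ is unambiguous.

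Collecting the coefficient of a fixed $\chi(\nu)$ over all pairs $(\alpha,r)$ then yields
\[
a_\nu=-\sum_{(\alpha,r)}\nu_p(r)\,\det(w_{\alpha,r}),
\]
the sum taken over those $(\alpha,r)$ with $0<r<\langle\lambda+\rho,\alpha\rangle$ and $B_\nu\in\mathscr W\cdot A_{\alpha,r}$. To identify this index set with $I_\nu$, I would invoke the support constraint: following the analysis of \cite{McNinch}, whenever $\nu+\rho=w(\lambda+\rho-r\alpha)$ with $\lambda,\nu$ dominant and $\nu\prec\lambda$, one necessarily has $\operatorname{supp}(\alpha)=\operatorname{supp}(\lambda-\nu)$, so adjoining this condition leaves the sum unchanged while rendering $I_\nu$ finite and explicitly computable. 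This gives the claimed expression for the contribution of $\nu$ to $\nu^c_\mu(T_\lambda)$.

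The two steps I expect to demand the most care are the bookkeeping of the overall sign and the verification of the support matching. The sign is entirely dictated by the precise normalisation of the Jantzen $p$-sum formula and of the dot action used to extend $\chi$ to non-dominant weights; it must be tracked consistently with the convention of \cite{Cavallin2} to land on the stated minus sign. The support condition $\operatorname{supp}(\alpha)=\operatorname{supp}(\lambda-\nu)$ is what guarantees that the abstract orbit relation $B_\nu\in\mathscr W\cdot A_{\alpha,r}$ is realised by a transporting element lying in the parabolic subsystem cut out by $\lambda-\nu$, and hence that the determinants $\det(w_{\alpha,r})$ are computed coherently across all contributing pairs; establishing it rigorously is the genuine obstacle, the remainder being formal manipulation of the character ring.
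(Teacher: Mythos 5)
The paper contains no proof of this theorem to compare against: the statement is quoted from \cite[Theorem 3.8]{Cavallin2}, and the text explicitly refers the reader there. Your outline is, in substance, the argument of that source (which in turn follows \cite{McNinch}): write the Jantzen $p$-sum formula in the form $\nu^c(T_\lambda)=-\sum_{\alpha\in\Phi^+}\sum_{0<r<\langle\lambda+\rho,\alpha\rangle}\nu_p(r)\,\chi(\lambda-r\alpha)$, use the identities $\chi(w(\eta+\rho)-\rho)=\det(w)\,\chi(\eta)$ and $\chi(\eta)=0$ when $\eta+\rho$ is singular to convert each term into $\pm\chi(\nu)$, where $\nu+\rho$ is the dominant $\mathscr{W}$-conjugate of $\lambda+\rho-r\alpha$, observe that regularity of $\nu+\rho$ makes the transporting element, and hence $\det(w_{\alpha,r})$, unique, and collect the coefficient of $\chi(\nu)$. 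Your reduction of the truncated to the untruncated situation is purely definitional and correct, since the paper defines the contribution of $\nu$ to $\nu^c_\mu(T_\lambda)$ to be the coefficient $a_\nu$ of \eqref{nu^c(T_lambda)_in_terms_of_chi(nu)'s}. Your sign convention is also the right one; it can be confirmed on $A_1$ with $\lambda=p\lambda_1$, where $-\nu_p(p)\,\chi(\lambda-p\alpha_1)=-\chi(-p\lambda_1)=\chi((p-2)\lambda_1)$ agrees with $\sum_{i>0}\ch V^i=\ch L_G((p-2)\lambda_1)$.

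Two points deserve mention. First, you pass silently over the mismatch between the interval $[2,\langle\lambda+\rho,\alpha\rangle]$ in the definition of $I_\nu$ and the range $0<r<\langle\lambda+\rho,\alpha\rangle$ in the sum formula; both discrepancies are vacuous, but this needs a sentence: $r=1$ contributes nothing because $\nu_p(1)=0$, and no pair with $r=\langle\lambda+\rho,\alpha\rangle$ can lie in $I_\nu$, since then $\lambda+\rho-r\alpha=s_\alpha(\lambda+\rho)$, so $B_\nu\in\mathscr{W}\cdot A_{\alpha,r}$ would place the regular dominant weight $\nu+\rho$ in the $\mathscr{W}$-orbit of $\lambda+\rho$ and force $\nu=\lambda$, contrary to $\nu\prec\lambda$. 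Second, the only substantive input you leave unproved is the support identity, i.e.\ that imposing $\operatorname{supp}(\alpha)=\operatorname{supp}(\lambda-\nu)$ in $I_\nu$ discards no contributing pair. One inclusion is cheap: the dominant conjugate of any weight dominates it, so $\nu\succcurlyeq\lambda-r\alpha$, hence $r\alpha\succcurlyeq\lambda-\nu$ and $\operatorname{supp}(\lambda-\nu)\subseteq\operatorname{supp}(\alpha)$; the reverse inclusion is the actual lemma you import from \cite{McNinch}, and citing it is proportionate here given that the paper outsources the entire proof. Do correct, though, your closing gloss on what the support condition buys: the determinants $\det(w_{\alpha,r})$ are already unambiguous by regularity of $\nu+\rho$, independently of any parabolic considerations; the condition serves only to guarantee that $I_\nu$ is the exact index set of contributing pairs and to make it finitely checkable in practice.
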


\subsection{Computing certain weight multiplicities}     
\label{Computing_multiplicities}                         

We now use the results introduced in the previous section to investigate the structure of certain Weyl modules, as well as to compute various weight multiplicities in certain irreducible modules. For $\lambda\in X^+(T)$ and $c_1,\ldots,c_n\in \mathbb{Z}_{\geq 0},$ we use the notation $\lambda-c_1c_2\cdots c_n$ for the weight $\lambda-\sum_{r=1}^n{c_r\alpha_r}.$

\begin{lem}\label{C.f. of [a,b,a] for type B}
Let $G$ be of type $B_3$ over $K,$ and let $a,b\in \mathbb{Z}_{>0}$ be such  $a+b+1=p.$ Also let $\lambda=a\lambda_1+b\lambda_2+a\lambda_3\in X^+(T)$ and $\mu=\lambda-111\in \Lambda^+(\lambda).$ Then $\mu$ affords the highest weight of a composition factor of $V_G(\lambda)$ if and only if $(a,b)=(p-2,1).$
\end{lem}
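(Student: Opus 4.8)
The plan is to apply the machinery of Section~\ref{Tools}, which reduces the question of whether $\mu=\lambda-111$ affords a composition factor of $V_G(\lambda)$ to a computation of contributions via Theorem~\ref{How_to_determine_contributions} and the truncated sum formula. Concretely, I would form $\nu^c_\mu(T_\lambda)$ as in \eqref{nu_mu^c(T_lambda)_in_terms_of_ch(xi)'s} and read off the coefficient $b_\mu$: by Proposition~\ref{Insight_on_possible_composition_factors}, $\mu$ affords the highest weight of a composition factor of $V_G(\lambda)$ if and only if $b_\mu\neq 0$. Since $\mu$ is quite close to $\lambda$ (differing by $\alpha_1+\alpha_2+\alpha_3$), the only dominant weights $\nu$ with $\mu\preccurlyeq\nu\prec\lambda$ are $\mu$ itself together with the weights obtained by subtracting a single simple root from $\lambda$ (namely $\lambda-\alpha_1,\lambda-\alpha_2,\lambda-\alpha_3$, those that are dominant), and possibly a few intermediate ones; I would enumerate these explicitly.

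First I would compute $\chi_\mu(\nu)$ for each such $\nu$, which here amounts to determining which of the intermediate dominant weights genuinely afford composition factors of the relevant Weyl modules and with what multiplicity, and then assemble the integers $b_\xi$ in \eqref{nu_mu^c(T_lambda)_in_terms_of_ch(xi)'s}. The key input is the contribution of each dominant $\nu$ with $\mu\preccurlyeq\nu\prec\lambda$ to $\nu^c_\mu(T_\lambda)$, which Theorem~\ref{How_to_determine_contributions} expresses as $-\sum_{(\alpha,r)\in I_\nu}\nu_p(r)\det(w_{\alpha,r})$. For the weight $\mu$ itself, I would identify the index set $I_\mu$: this requires finding all pairs $(\alpha,r)$ with $\mathrm{supp}(\alpha)=\mathrm{supp}(\lambda-\mu)=\{\alpha_1,\alpha_2,\alpha_3\}$ (so $\alpha$ must be the highest root or another long/short positive root of full support) such that $B_\mu\in\mathscr{W}\cdot A_{\alpha,r}$, using the explicit $\varepsilon$-coordinates for $B_3$ from \cite[Planche~II]{Bourbaki}. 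The condition $a+b+1=p$ is what makes a relevant $r$ divisible by $p$, so that $\nu_p(r)\geq 1$ and a nonzero contribution can arise; I expect the further constraint $(a,b)=(p-2,1)$ to emerge from requiring that $B_\mu$ actually lie in the $\mathscr{W}$-orbit of $A_{\alpha,r}$, which pins down a relation among $a$ and $b$ beyond the congruence.

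The main obstacle will be the orbit-membership test $B_\mu\in\mathscr{W}\cdot A_{\alpha,r}$ and the correct bookkeeping of signs $\det(w_{\alpha,r})$: this is a finite but delicate check in the $8$-element Weyl group of $B_3$ acting on $\mathbb{Q}^3$, and one must be careful to include all contributing pairs (there may be cancellation between different $(\alpha,r)$) rather than just the most obvious one. I would cross-check the outcome against the strong-linkage necessary condition of Proposition~\ref{Corollary to The linkage principle}, verifying that $2\mathrm{d}(\lambda,\mu)\in p\Z$ under $a+b+1=p$, which confirms $\mu$ is at least linked to $\lambda$ and hence a candidate; the contribution computation then decides the actual presence of the factor. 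Finally, having shown $b_\mu\neq0$ precisely when $(a,b)=(p-2,1)$, I would invoke the ``only if'' direction of Proposition~\ref{Insight_on_possible_composition_factors} to conclude that for all other pairs $(a,b)$ with $a+b+1=p$ the weight $\mu$ does not afford a composition factor, completing the equivalence.
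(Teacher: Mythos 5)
Your plan is essentially the paper's own proof for $p>3$: enumerate the dominant weights $\nu$ with $\mu\preccurlyeq\nu\prec\lambda$ (namely $\lambda-100$, $\lambda-010$, $\lambda-001$, $\lambda-101$, $\lambda-110$, $\lambda-011$ and $\mu$ itself), eliminate the multiplicity-one weights by \cite{premet} together with Lemma~\ref{no_contribution_for_certain_weights} and eliminate $\lambda-011$ by Proposition~\ref{Corollary to The linkage principle}, then compute the surviving contributions via Theorem~\ref{How_to_determine_contributions} and read off $b_\mu$ through Proposition~\ref{Insight_on_possible_composition_factors}. Two of your expectations, however, are misplaced, and you should be aware of where the condition $(a,b)=(p-2,1)$ actually comes from. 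First, the orbit-membership test does \emph{not} pin down $(a,b)$: for $\alpha=\varepsilon_1$ one finds $B_\mu\in\mathscr{W}\cdot A_{\varepsilon_1,r}$ at $r=3a+2b+4$ for \emph{all} admissible $(a,b)$, and the constraint arises from the valuation, since $a+b+1=p$ gives $3a+2b+4\equiv a+2 \pmod p$, so $\nu_p(3a+2b+4)\neq 0$ if and only if $a=p-2$. Second, $I_\mu$ is not the only source of $\ch L_G(\mu)$ in $\nu^c_\mu(T_\lambda)$: the weight $\nu_1=\lambda-110$ contributes $1$ (via the pair $(\varepsilon_1-\varepsilon_3,\,r=p)$, which is where the hypothesis $a+b+1=p$ enters), and $\chi_\mu(\nu_1)=\ch L_G(\nu_1)+\delta_{p,a+2}\ch L_G(\mu)$, so the ``only if'' direction requires checking that this second channel also vanishes when $a\neq p-2$; your plan does cover this, since you intend to decompose every $\chi_\mu(\nu)$, but calling $I_\mu$ ``the key input'' under-weights it. Finally, two smaller corrections: the Weyl group of $B_3$ has order $48$, not $8$ (though, as in the paper, conjugacy is tested coordinate-wise via permutations and sign changes of the $\varepsilon_i$), and the paper splits off the boundary case $p=3$ (where $(a,b)=(1,1)=(p-2,1)$ is forced and several of the intermediate weights fail to be dominant), settling it not by the sum formula but by comparing $\m_{V_G(\lambda)}(\mu)=4$ with $\m_{L_G(\lambda)}(\mu)=3$ from L\"ubeck's tables; your write-up should at least flag this case rather than run the generic computation silently through it.
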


\begin{proof} 
We first deal with the situation where  $p=3,$ $a=b=1,$ so that we have $p=a+2$ in this case. Here the only dominant $T$-weights $\nu\in X^+(T)$ such that $\mu \preccurlyeq \nu \prec \lambda$ are $\lambda-110,$ $\lambda-011,$ and $\mu$ itself. Now an application of Proposition \ref{Corollary to The linkage principle} shows that $\lambda-011$ cannot afford the highest weight of a composition factor of $V_G(\lambda).$ On the other hand $[V_G(\lambda),L_G(\lambda-110)]=1$ by Proposition \ref{8.6_Seitz}, but $\m_{L_G(\lambda-110)}(\mu)=0$ (as $\lambda-110 =3\lambda_3$) and hence
$$
\m_{L_G(\lambda)}(\mu)=\m_{V_G(\lambda)}(\mu)-[V_G(\lambda),L_G(\mu)].
$$
Now $\m_{L_G(\lambda)}(\mu)=3<4=\m_{V_G(\lambda)}(\mu)$ by \cite{Lueweb}, from which we deduce the existence of a second composition factor of $V_G(\lambda).$ Now since no weight $\nu$ with $\mu \prec \nu \prec \lambda$ affords the highest weight of a composition factor of $V_G(\lambda),$ the desired result follows in this case. 

We thus assume $p>3$ in the remainder of the proof and fix a Jantzen filtration $V_G(\lambda)=V^0\supsetneq V^1\supseteq \ldots \supseteq V^k\supseteq 0$ of $V_G(\lambda).$ We  start by computing all contributions to $\nu_\mu^c(T_\lambda).$ Here the dominant $T$-weights $\nu\in X^+(T)$ such that $\mu \preccurlyeq \nu \prec \lambda$ are $\lambda-100$ (if $a>1$), $\lambda-010$ (if $b>1$), $\lambda-001$ (if $a>1$), $\lambda-101$ (if $a>1),$ $\lambda-110,$ $\lambda-011,$ and $\mu$ itself. Now  $\lambda-100,$  $\lambda-010,$ $\lambda-001,$ and $\lambda-101$ all have multiplicity $1$ in $V_G(\lambda)$ and hence  none of them can afford the highest weight of a composition factor of $V_G(\lambda)$ by \cite{premet}. Recursively applying Lemma \ref{no_contribution_for_certain_weights}  then shows that those same weights  cannot contribute to $\nu^c_\mu(T_\lambda).$ In addition, an application of Proposition \ref{Corollary to The linkage principle} yields $[V_G(\lambda),L_G(\lambda-011)]=0,$ and hence $\lambda-011$ does not contribute to $\nu_\mu^c(T_\lambda)$ by Lemma \ref{no_contribution_for_certain_weights} again.

We now compute the contribution of $\nu_1=\lambda-110$ to $\nu^c_\mu(T_\lambda)$ by first determining all pairs $(\alpha,r)\in I_{\nu_1}$ as in Theorem \ref{How_to_determine_contributions}. A straightforward computation yields 
$$
B_{\nu_1}=\tfrac{1}{2}(3a+2b+3,a+2b+3,a+3),
$$
and since $\lambda-\nu_1$ has support $\{\alpha_1,\alpha_2\},$ we get that $\alpha=\alpha_1+\alpha_2=\varepsilon_1-\varepsilon_3$ by definition of $I_{\nu_1}.$ For $r\in \mathbb{Z},$ we have 
$$ 
A_{\varepsilon_1-\varepsilon_3,r}=\tfrac{1}{2}(3a+2b +5-2r, a+2b+3,a+1+2r).
$$
Recall from Bourbaki \cite[Planche II]{Bourbaki} that $\mathscr{W}$ acts by all permutations and sign changes  of the $\varepsilon_i,$ hence the orbit of any $3$-tuple $C=(c_1,c_2,c_3)\in \mathbb{Q}^3$ is given by 
$$
\mathscr{W}\cdot C=\{\pm c_{\sigma(1)},\pm c_{\sigma(2)}, \pm c_{\sigma(3)}:\sigma \in \mbox{Perm}(1,2,3)\}.
$$
We thus deduce that $A_{\varepsilon_1-\varepsilon_3,r}$ and $B_{\nu_1}$ are conjugate under the action of $\mathscr{W}$ if and only if $\{3a+2b+5-2r,a+1+2r\}=\{|3a+2b+3|,|a+3|\}.$ By studying each possibility separately, one easily shows that $A_{\varepsilon_1-\varepsilon_3,r}$ is $\mathscr{W}$-conjugate to $B_{\nu_1}$ if and only if $r=p,$ in which case the chosen element
$$
w_{\varepsilon_1-\varepsilon_3,r}=s_{\varepsilon_1-\varepsilon_3}
$$
satisfies $w_{\varepsilon_1-\varepsilon_3,r}\cdot  A_{\alpha,r}  =B_{\nu_1}$ as desired. Consequently an application of Theorem \ref{How_to_determine_contributions} shows that $\nu_1$ contributes to $\nu_\mu^c(T_\lambda)$ by $\nu_p(p)=1.$ We next determine the contribution of $\mu$ to $\nu^c_\mu(T_\lambda).$ Here again, a computation  yields
$$
B_{\mu}=\tfrac{1}{2}(3a+2b+3,a+2b+3,a+1),
$$
and since $\lambda-\mu$ has support $\Pi,$ we get that  $\alpha\in \{\varepsilon_1,\varepsilon_1+\varepsilon_2, \varepsilon_1+\varepsilon_3\}.$ Dealing with each possibility separately, one then concludes that $\mu$ contributes to $\nu^c_\mu(T_\lambda)$ by $\nu_p(3a+2b+4)$ by Theorem \ref{How_to_determine_contributions}, so 
$$
 \nu^c_\mu(T_\lambda)=  \chi_\mu(\lambda-110) + \nu_p(3a+2b+4)\chi_\mu(\mu).
 $$
Now  $\chi_\mu(\lambda-110)=\ch L_G(\lambda-110) + \delta_{p,a+2}\ch L_G(\mu)$ and $\chi_\mu(\mu)=\ch L_G(\mu).$ An application of Proposition \ref{Insight_on_possible_composition_factors} then completes the proof.
\end{proof}

\begin{lem}\label{C.f. of [a,b] for type B}
Let $G$ be  of type $B_n$ $(n\geq 2)$ over $K,$ and let $a,b\in \mathbb{Z}_{>0}$ be such that $b>1,$ and $p\mid (a+b+n-1).$ Also let  $\lambda=a\lambda_1+b\lambda_n\in X^+(T_G)$ and  $\mu=\lambda-1\ldots 12\in \Lambda^+(\lambda).$ Then $\mu$ affords the highest weight of a composition factor of $V_G(\lambda).$ Furthermore if $n=2,$ then 
$$
[V_G(\lambda),L_G(\mu)]=1.
$$
\end{lem}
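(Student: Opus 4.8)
The plan is to apply the truncated Jantzen $p$-sum formula of Section~\ref{Tools}, following the model of Lemma~\ref{C.f. of [a,b,a] for type B}. First I would record that $\mu$ is dominant: since $\alpha_1+\cdots+\alpha_{n-1}+2\alpha_n=\varepsilon_1+\varepsilon_n$, one computes $\mu=(a-1)\lambda_1+\lambda_{n-1}+(b-2)\lambda_n$ for $n\geq 3$ (and $\mu=a\lambda_1+(b-2)\lambda_2$ for $n=2$), which lies in $X^+(T)$ as $a\geq 1$ and $b\geq 2$. The aim is then to compute $\nu^c_\mu(T_\lambda)$ and read off the coefficient $b_\mu$ of $\ch L_G(\mu)$: by Proposition~\ref{Insight_on_possible_composition_factors}, the weight $\mu$ affords the highest weight of a composition factor exactly when $b_\mu\neq 0$, and $[V_G(\lambda),L_G(\mu)]\leq b_\mu$.

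The core step is to determine the contribution of $\mu$ to $\nu^c_\mu(T_\lambda)$ via Theorem~\ref{How_to_determine_contributions}. As $\lambda-\mu=\varepsilon_1+\varepsilon_n$ has full support, the only admissible roots are those of full support, namely $\varepsilon_1$ and the $\varepsilon_1+\varepsilon_j$ with $2\leq j\leq n$. Writing $B_\mu=\mu+\rho$ and $A_{\alpha,r}=\lambda+\rho-r\alpha$ in the $\varepsilon$-coordinates of \cite[Planche II]{Bourbaki}, the coordinates indexed $2,\dots,n-1$ of $B_\mu$ already match those of $A_{\varepsilon_1+\varepsilon_n,r}$, and a short analysis of the two extreme coordinates shows that $B_\mu\in\mathscr{W}\cdot A_{\alpha,r}$ holds for a unique pair: $\alpha=\varepsilon_1+\varepsilon_n$ and $r=a+b+n-1$, realised by the signed permutation $w\in\mathscr{W}$ fixing $\varepsilon_2,\dots,\varepsilon_{n-1}$ and sending $\varepsilon_1\mapsto-\varepsilon_n$, $\varepsilon_n\mapsto-\varepsilon_1$, for which $\det(w)=-1$. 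The roots $\varepsilon_1$ and $\varepsilon_1+\varepsilon_j$ ($2\leq j\leq n-1$) yield no match, since the coordinate $(\lambda+\rho)_n$ that survives in the corresponding $A_{\alpha,r}$ does not occur among the coordinates of $B_\mu$. As $p\mid a+b+n-1$ we have $r\equiv 0\pmod p$, so Theorem~\ref{How_to_determine_contributions} gives the contribution of $\mu$ as $-\nu_p(a+b+n-1)\det(w)=\nu_p(a+b+n-1)\geq 1$.

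Next I would argue that $\mu$ is the unique contributing weight, so that $b_\mu$ equals this nonzero contribution. The dominant weights $\nu$ with $\mu\prec\nu\prec\lambda$ are few, each being $\lambda$ minus a short combination of $\alpha_1,\alpha_{n-1},\alpha_n$ (together with $\lambda-\varepsilon_1$). Of these, $\lambda-\alpha_1$ and $\lambda-\alpha_n$ have multiplicity one in $V_G(\lambda)$, so cannot afford composition factors by \cite{premet}, whence by Lemma~\ref{no_contribution_for_certain_weights} they make no contribution. The remaining ones I would treat with the linkage bound of Proposition~\ref{Corollary to The linkage principle}: computing $\textnormal{d}(\lambda,\nu)$ shows that, modulo the congruence $p\mid a+b+n-1$, divisibility of $2\textnormal{d}(\lambda,\nu)$ by $p$ reduces to a further congruence on $a,b,n$ (such as $p\mid a$, $p\mid b$ or $p\mid b\pm 1$), which fails for all but a small number of boundary weights. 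Apart from those boundary cases, no intermediate weight contributes and Lemma~\ref{no_contribution_for_certain_weights} exhibits $\mu$ as a (maximal, hence) composition-factor highest weight, proving the first assertion.

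For $n=2$ the situation is clean, because the troublesome weight $\lambda-\alpha_{n-1}-2\alpha_n$ degenerates to $\mu$ itself; the intermediate list is just $\lambda-\alpha_1,\lambda-\alpha_2,\lambda-\alpha_1-\alpha_2,\lambda-2\alpha_2$, and the three nontrivial congruences ($p\mid a$, $p\mid b$, $p\mid b-1$) all fail since $a,b<p$ and $b>1$. Hence $\nu^c_\mu(T_\lambda)=\nu_p(a+b+1)\,\ch L_G(\mu)$, and $p\mid a+b+1$ with $a,b<p$ forces $a+b+1=p$, so $\nu_p(a+b+1)=1$; thus $b_\mu=1$, and Proposition~\ref{Insight_on_possible_composition_factors} together with the first assertion gives $[V_G(\lambda),L_G(\mu)]=1$. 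I expect the main obstacle to be the elimination of intermediate weights for general $n$: the linkage test alone leaves a few boundary weights (for instance $\lambda-\alpha_{n-1}-2\alpha_n$ when $b\equiv-1\pmod p$) which must be analysed individually—shown not to be composition factors, or shown to contribute without cancelling the contribution of $\mu$—before one can assert $b_\mu\neq 0$ in every case; for these one can fall back on the explicit data of \cite{Lueweb}.
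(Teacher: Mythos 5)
Your route is exactly the paper's: truncate the Jantzen $p$-sum at $\mu$, compute the contribution of $\mu$ via Theorem~\ref{How_to_determine_contributions}, and eliminate the intermediate dominant weights by \cite{premet} together with Lemma~\ref{no_contribution_for_certain_weights} and the linkage bound. Your central computation agrees with the paper's: the unique admissible pair is $(\varepsilon_1+\varepsilon_n,\,r=a+b+n-1)$, realised by the reflection $s_{\varepsilon_1+\varepsilon_n}$ of determinant $-1$ (the paper writes the same element as $s_{\varepsilon_1-\varepsilon_n}s_{\varepsilon_1}s_{\varepsilon_n}$), giving contribution $\nu_p(a+b+n-1)\geq 1$; and your $n=2$ analysis, with conclusion $[V_G(\lambda),L_G(\mu)]=1$ via Proposition~\ref{Insight_on_possible_composition_factors}, is the paper's, up to a small slip: your list of congruences omits the one attached to $\lambda-\alpha_1-\alpha_2$, for which $2\textnormal{d}(\lambda,\nu)=2a+b+2\equiv a+1\pmod p$; it too fails in the $p$-restricted setting, since $a=p-1$ would force $p\mid b$ with $0<b<p$.

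For $n\geq 3$, however, there is a genuine gap precisely where you flag it, and your proposed fallback would not close it. The weight $\nu=\lambda-\alpha_{n-1}-2\alpha_n$ has multiplicity $2$ in $V_G(\lambda)$ (e.g.\ by Freudenthal in the $B_2$-Levi generated by $\alpha_{n-1},\alpha_n$), so \cite{premet} is unavailable, and $2\textnormal{d}(\lambda,\nu)=2(b+1)$, so Proposition~\ref{Corollary to The linkage principle} is silent exactly when $p\mid b+1$ --- a case that genuinely occurs under the hypotheses (take $b=p-1$ and $a\equiv 2-n\pmod p$; for $n=3$, $a=b=p-1$). Deferring to the tables of \cite{Lueweb} cannot settle infinitely many $(a,b,n)$. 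The fix stays entirely inside your own machinery: what must vanish is the \emph{contribution} $a_\nu$ of $\nu$ to $\nu^c_\mu(T_\lambda)$, not its composition multiplicity, and Theorem~\ref{How_to_determine_contributions} computes $a_\nu$ outright. Since $\textnormal{supp}(\lambda-\nu)=\{\alpha_{n-1},\alpha_n\}$, only $\alpha\in\{\varepsilon_{n-1},\,\varepsilon_{n-1}+\varepsilon_n\}$ are admissible, and comparing the last two coordinates of $B_\nu=(\ldots,\tfrac{b+1}{2},\tfrac{b-1}{2})$ with $A_{\alpha,r}$ produces matches only at $(\varepsilon_{n-1},2)$, whose contribution is $\pm\nu_p(2)=0$ for the odd primes at issue, at $(\varepsilon_{n-1},b+1)$ with $w$ a transposition composed with a single sign change, $\det(w)=+1$, contributing $-\nu_p(b+1)$, and at $(\varepsilon_{n-1}+\varepsilon_n,b+1)$ with $w=s_{\varepsilon_{n-1}+\varepsilon_n}$, $\det(w)=-1$, contributing $+\nu_p(b+1)$. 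The last two terms cancel, so $a_\nu=0$ for \emph{every} $b$, including $b\equiv -1\pmod p$; this cancellation is what the paper's terse ``arguing as in the $n=2$ case'' compresses. With it, $\mu$ is the unique contributing weight, $b_\mu=\nu_p(a+b+n-1)\geq 1$, and your argument closes with no individual case analysis or table look-up.
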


\begin{proof}
Fix a Jantzen filtration $V_G(\lambda)=V^0\supsetneq V^1\supseteq \ldots \supseteq V^k\supseteq 0$ of $V_G(\lambda).$ We proceed as in the proof of Lemma \ref{C.f. of [a,b,a] for type B}, starting  by computing   all contributions to $\nu^c_\mu(T_\lambda)$ in the case where $n=2.$ Here the only dominant $T$-weights $\nu\in X^+(T)$ satisfying $\mu \preccurlyeq \nu \prec \lambda$ are $\lambda-10$ (if $a>1$), $\lambda-01,$ $\lambda-02$ (if $b>3$),  $\lambda-11,$ and $\mu$ itself. Now each of $\lambda-10,$ $\lambda-01,$ and $\lambda-02$ has multiplicity at most $1$ in $V_G(\lambda)$ and hence 
$$
[V_G(\lambda),L_G(\lambda-10)]=[V_G(\lambda),L_G(\lambda-01)]=[V_G(\lambda),L_G(\lambda-02)]=0
$$
by \cite{premet}. Applying Lemma \ref{no_contribution_for_certain_weights}  then shows that  none of these can contribute to $\nu^c_\mu(T_\lambda).$ Also $[V_G(\lambda),L_G(\lambda-11)]=0 $ by Proposition \ref{Corollary to The linkage principle}, hence again showing that $\lambda-11$ does not contribute to $\nu_\mu^c(T_\lambda)$ by Lemma \ref{no_contribution_for_certain_weights}. Finally, we compute the contribution of $\mu,$ by first determining all pairs $(\alpha,r)\in I_\mu$ as in Theorem \ref{How_to_determine_contributions}. A straightforward computation yields
$$
B_{\mu}=\tfrac{1}{2}(2a+b+1,b-1),
$$
and since $\lambda-\mu$ has support $\Pi,$ we get that $\alpha\in \{\varepsilon_1,\varepsilon_1+\varepsilon_2\} $ by definition of $I_\mu.$ We   claim that for every $r\in \mathbb{Z}_{\geq 0},$ the $2$-tuple  $A_{\varepsilon_1,r}$ is not conjugate to $B_\mu $ under the action of the Weyl group $\mathscr{W} $ of $G.$  Indeed, as in the proof of Lemma \ref{C.f. of [a,b,a] for type B}, recall from \cite[Planche II]{Bourbaki} that $\mathscr{W}$ acts by all permutations and sign changes of the $\varepsilon_i.$ Now $A_{\varepsilon_1,r}=\tfrac{1}{2}(2a+b+3-2r,b+1),$ and since none of the two coordinates of $B_\mu$ is equal to $\pm \tfrac{1}{2}(b+1),$ we immediately deduce that $A_{\varepsilon_1,r} \cap (\mathscr{W} \cdot B_\mu )=\emptyset$ for every $r\in \mathbb{Z}.$ Now considering the positive root $\varepsilon_1 + \varepsilon_2,$ we have
$$
A_{\varepsilon_1+\varepsilon_2,r}=\tfrac{1}{2}(2a+b+3-2r,b+1-2r),
$$
from which we deduce that $A_{\varepsilon_1+\varepsilon_2,r}$ and $B_\mu$ are  conjugate under the action of $\mathscr{W}$ if and only if $\{2a+b+3-2r,b+1-2r \}=\{|2a+b+1|,|b-1|\}.$ By studying each possibility separately, one  shows that $A_{\varepsilon_1+\varepsilon_2,r}$ is $\mathscr{W}$-conjugate to $B_\mu$ if and only if $r=p,$ in which case the  element
$$
w_{\varepsilon_1+\varepsilon_2,r}=s_{\varepsilon_1-\varepsilon_2}s_{\varepsilon_1}s_{\varepsilon_2}
$$
satisfies $w_{\varepsilon_1+\varepsilon_2,r}\cdot (A_{\varepsilon_1+\varepsilon_2,r})=B_\mu$ as desired. Therefore $
\nu^c_\mu(T_\lambda)= \chi_\mu(\mu)$ by Theorem \ref{How_to_determine_contributions}, and since $\chi_\mu(\mu)=\ch L_G(\mu),$  we get that $ \nu _\mu^c(T_\lambda)=  \ch L_G(\mu).$ An application of  Proposition \ref{Insight_on_possible_composition_factors} then completes the proof.

Next we assume $n>2,$ in which case the dominant $T$-weights $\nu\in X^+(T)$ satisfying $\mu \preccurlyeq \nu \prec \lambda$ are $\lambda-\alpha_1$ (if $a>1$), $\lambda-\alpha_n,$ $\lambda-2\alpha_n$ (if $b>3$), $\lambda-\alpha_{n-1}-2\alpha_n$ (if $b>2$), $\lambda-(\alpha_1+\cdots +\alpha_n),$ and $\mu$ itself. Now arguing as in the $n=2$ case, one shows that neither of $\lambda-\alpha_1,$ $\lambda-\alpha_n,$ $\lambda-2\alpha_n,$ $\lambda-\alpha_1-\alpha_n,$ $\lambda-\alpha_{n-1}-2\alpha_n,$ nor $\lambda-(\alpha_1+\cdots+\alpha_n)$ can contribute to $\nu^c_\mu(T_\lambda).$ Finally, we compute the contribution of $\mu,$ by first determining all pairs $(\alpha,r)\in I_\mu$ as in Theorem \ref{How_to_determine_contributions}. Again, a computation   yields
$$
B_{\mu}=\tfrac{1}{2}(2a+b+2n-3,b+2n-3,b+2n-5,\ldots,b+3,b-1),
$$
and since $\lambda-\mu$ has support $\Pi,$ we get that $\alpha\in \{\varepsilon_1,\varepsilon_1+\varepsilon_l:2\leq l\leq n\}.$ Now for $r\in \mathbb{Z}_{\geq 0} $ and $\alpha\in \Phi^+,$ we have 
$A_{\alpha,r}=\tfrac{1}{2}(2a+b+2n-1,b+2n-3,b+2n-5,\ldots,b+1)-r\alpha.$ Now by Bourbaki \cite[Planche II]{Bourbaki} again , $\mathscr{W}$ acts by all permutations and sign changes of the $\varepsilon_i,$ from which one immediately deduces that $A_{\alpha,r}$ cannot be conjugate to $B_\mu$ if $\alpha\neq \varepsilon_1+\varepsilon_n.$ Also, a straightforward computation yields
$$
A_{\varepsilon_1+\varepsilon_n,r} = \tfrac{1}{2}(2a+b+2n-1-2r,b+2n-3, b+2n-5,\ldots,b+3,b+1-2r),
$$
from which we deduce that $A_{\varepsilon_1+\varepsilon_n,r}$ and $B_\mu$ are  conjugate under the action of $\mathscr{W}$ if and only if $\{2a+b+3+2n-1-2r,b+1-2r\}=\{|2a+b+2n-3|,|b-1|\}.$ By studying each possibility separately, one shows that $A_{\varepsilon_1+\varepsilon_n,r}$ is $\mathscr{W}$-conjugate to $B_\mu$ if and only if $r=a+b+n-1,$ in which case the element
$$
w_{\varepsilon_1+\varepsilon_n,r}=s_{\varepsilon_1-\varepsilon_n}s_{\varepsilon_1}s_{\varepsilon_n}
$$
satisfies $w_{\varepsilon_1+\varepsilon_n,r}\cdot (A_{\varepsilon_1+\varepsilon_n,r})=B_\mu$ as desired. Therefore $
\nu^c_\mu(T_\lambda)= \nu_p(a+b+n-1)\chi_\mu(\mu)$ by Theorem \ref{How_to_determine_contributions}, and since $\chi_\mu(\mu)=\ch L_G(\mu),$  we get that $ \nu _\mu^c(T_\lambda)= \nu_p(a+b+n-1) \ch L_G(\mu).$ An application of  Proposition \ref{Insight_on_possible_composition_factors} then completes the proof.
\end{proof}

We will also require some knowledge about the structure of certain Weyl modules for a simple group of type $A_n$ over $K.$ However, due to the complexity of the description of fundamental dominant weights in terms of an orthonormal basis of a Euclidean space for such $G,$ it is more convenient  to work in a group of type $B_{n+1},$ and then deduce the desired result for $A_n.$

\begin{lem}\label{C.f. of [a,b,...,c] for type A}
Let $G$ be of type $B_{n+1}$ $(n\geq 3)$ over $K,$ and let $0<a,b,c<p$ be   positive integers. Also let   $\lambda=a\lambda_1+b\lambda_2+c\lambda_n\in X^+(T),$ $\nu_1=\lambda-\alpha_1-\alpha_2,$ $\nu_2=\lambda-(\alpha_2+\cdots + \alpha_n),$ and $\mu=\lambda-(\alpha_1+\cdots+\alpha_n).$ Then  $\mu$ affords the highest weight of a composition factor of $V_G(\lambda)$ if and only if $p\mid (a+b+c+n-1).$ Also if $n=3,$ then we have 
$$
\chi_\mu(\lambda) = \ch L_G(\lambda)  + \delta_{p,a+b+1} \ch L_G(\nu_1)	+ \delta_{p,b+c+1} \ch L_G(\nu_2)+  \delta_{p,a+b+c+2} \ch L_G(\mu).
$$
\end{lem}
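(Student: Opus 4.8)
The plan is to follow the pattern of the proofs of Lemmas~\ref{C.f. of [a,b,a] for type B} and~\ref{C.f. of [a,b] for type B}, exploiting the realization of the weight inside the simply connected group of type $B_{n+1}$, where the fundamental weights admit a transparent description in the $\varepsilon$-basis. I fix a Jantzen filtration of $V_G(\lambda)$ and work with the truncated sum $\nu^c_\mu(T_\lambda)$ of~\eqref{nu_mu^c(T_lambda)_in_terms_of_chi_mu(nu)'s}. The first step is to enumerate the dominant weights $\nu$ with $\mu\preccurlyeq\nu\prec\lambda$: since $\lambda-\mu=\alpha_1+\cdots+\alpha_n$, these are of the form $\lambda-\sum_{i\in S}\alpha_i$ with $\emptyset\ne S\subseteq\{1,\dots,n\}$, and a direct inspection of the inequalities $\langle\nu,\alpha_j\rangle\ge0$ (using that the labels of $\lambda$ are $a,b,0,\dots,0,c,0$) shows that, besides the single-root weights $\lambda-\alpha_i$ and the weights of disconnected support, the only such $\nu$ are $\nu_1=\lambda-\alpha_1-\alpha_2$, $\nu_2=\lambda-(\alpha_2+\cdots+\alpha_n)$ and $\mu$. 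By Theorem~\ref{How_to_determine_contributions} a weight can contribute to $\nu^c_\mu(T_\lambda)$ only when $\lambda-\nu$ is the support of a positive root, i.e. only for these three ``interval'' weights: the single-root weights have multiplicity $1$, so by \cite{premet} and a repeated (top-down) application of Lemma~\ref{no_contribution_for_certain_weights} they do not contribute, while weights of disconnected support have $I_\nu=\emptyset$.

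Next I would compute the three contributions exactly as in Lemma~\ref{C.f. of [a,b] for type B}. Writing $\lambda+\rho=(a+b+c+n+\tfrac12,\,b+c+n-\tfrac12,\,c+n-\tfrac32,\dots,\,c+\tfrac32,\,\tfrac12)$ and using that $\mathscr W$ acts on $\mathbb Q^{n+1}$ by all permutations and sign changes, one checks that in each case $B_\nu$ is $\mathscr W$-conjugate to $A_{\alpha,r}$ for a single pair $(\alpha,r)$, namely $(\varepsilon_1-\varepsilon_3,\,a+b+1)$ for $\nu_1$, $(\varepsilon_2-\varepsilon_{n+1},\,b+c+n-2)$ for $\nu_2$, and $(\varepsilon_1-\varepsilon_{n+1},\,a+b+c+n-1)$ for $\mu$, the conjugating element being in each case the single reflection interchanging the two relevant coordinates. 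Since each such element has determinant $-1$, Theorem~\ref{How_to_determine_contributions} yields the nonnegative contributions $\nu_p(a+b+1)$, $\nu_p(b+c+n-2)$ and $\nu_p(a+b+c+n-1)$, respectively.

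To establish the equivalence of part one I would treat the two directions separately. For necessity, Proposition~\ref{Corollary to The linkage principle} applies to $G=B_{n+1}$: a short computation gives $\mathrm d(\lambda,\mu)=a+b+c+n-1$ (normalizing long roots to length $1$), so if $\mu$ affords a composition factor of $V_G(\lambda)$ then $2(a+b+c+n-1)\in p\mathbb Z$, whence $p\mid(a+b+c+n-1)$ when $p>2$. For sufficiency, the coefficient of $\ch L_G(\mu)$ in~\eqref{nu_mu^c(T_lambda)_in_terms_of_ch(xi)'s} is $b_\mu=a_{\nu_1}[V_G(\nu_1),L_G(\mu)]+a_{\nu_2}[V_G(\nu_2),L_G(\mu)]+a_\mu$; as the contributions $a_{\nu_1},a_{\nu_2},a_\mu$ and the decomposition numbers are all nonnegative, we obtain $b_\mu\ge a_\mu=\nu_p(a+b+c+n-1)$. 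Hence $p\mid(a+b+c+n-1)$ forces $b_\mu>0$, and Proposition~\ref{Insight_on_possible_composition_factors} shows that $\mu$ affords a composition factor. The residual case $p=2$ (where $a=b=c=1$) is settled by the same contribution computation, in which the three contributions vanish simultaneously precisely when $2\nmid(n+2)$.

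Finally, for the refined identity in the case $n=3$, the point is that $\nu_1-\mu=\alpha_3$ and $\nu_2-\mu=\alpha_1$ are single simple roots, with $\langle\nu_1,\alpha_3\rangle=c+1>0$ and $\langle\nu_2,\alpha_1\rangle=a+1>0$; hence $\mu$ has multiplicity $1$ in each of $V_G(\nu_1)$ and $V_G(\nu_2)$, equal to its multiplicity in $L_G(\nu_1)$ and $L_G(\nu_2)$, so that $[V_G(\nu_i),L_G(\mu)]=0$ and $\chi_\mu(\nu_i)=\ch L_G(\nu_i)$. Together with $\chi_\mu(\mu)=\ch L_G(\mu)$ and the contributions above (now with $b+c+n-2=b+c+1$ and $a+b+c+n-1=a+b+c+2$), this gives $\nu^c_\mu(T_\lambda)=\nu_p(a+b+1)\ch L_G(\nu_1)+\nu_p(b+c+1)\ch L_G(\nu_2)+\nu_p(a+b+c+2)\ch L_G(\mu)$. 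Since $a,b,c<p$ makes each of these valuations at most $1$, every $b_\xi\in\{0,1\}$, so the upper bounds of Proposition~\ref{Insight_on_possible_composition_factors} are attained and the displayed decomposition of $\chi_\mu(\lambda)$ follows. The main obstacle is the general-$n$ control of $b_\mu$: in contrast to the case $n=3$, the weight $\mu$ lies several simple roots below $\nu_1$ and $\nu_2$, so the vanishing of $[V_G(\nu_i),L_G(\mu)]$ is no longer automatic, and it is exactly the interplay of the linkage principle (for the ``only if'' direction) with the nonnegativity of the Jantzen contributions (for the ``if'' direction) that avoids having to evaluate these decomposition numbers.
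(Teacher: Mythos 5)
Your treatment of the first assertion is correct and follows the paper's own method: the same Jantzen filtration and truncated sum, the same enumeration of the dominant weights between $\mu$ and $\lambda$, the same elimination of non-contributors (multiplicity one together with Lemma~\ref{no_contribution_for_certain_weights}, and $I_\nu=\emptyset$ for disconnected support), and the same contribution computations via Theorem~\ref{How_to_determine_contributions}. In fact your value $\nu_p(b+c+n-2)$ for the contribution of $\nu_2$ is the correct one for general $n$ (the paper's displayed $\nu_p(b+c+1)$ agrees with it only when $n=3$), and your two deviations in proving the equivalence --- Proposition~\ref{Corollary to The linkage principle} for necessity when $p>2$, nonnegativity of all contributions and decomposition numbers for sufficiency, plus the direct computation at $p=2$ --- are valid and neatly avoid having to evaluate $[V_G(\nu_i),L_G(\mu)]$, which the paper computes instead.

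The $n=3$ identity is where your argument has a genuine gap. You claim that since $\nu_1-\mu=\alpha_3$ and $\nu_2-\mu=\alpha_1$ are simple roots with $\langle\nu_1,\alpha_3^\vee\rangle=c+1>0$ and $\langle\nu_2,\alpha_1^\vee\rangle=a+1>0$, the multiplicity of $\mu$ in $V_G(\nu_i)$ equals its multiplicity in $L_G(\nu_i)$, whence $[V_G(\nu_i),L_G(\mu)]=0$. Positivity of the pairing gives multiplicity $1$ in the Weyl module, but survival of the weight in the irreducible quotient requires $p\nmid\langle\nu_i,\alpha^\vee\rangle$: since $e_{\alpha}f_{\alpha}v^+=\langle\nu_i,\alpha^\vee\rangle v^+$, the vector $f_{\alpha_3}v^+$ is a maximal vector, hence zero, in $L_G(\nu_1)$ exactly when $c+1=p$, and then $[V_G(\nu_1),L_G(\mu)]=1$; similarly $[V_G(\nu_2),L_G(\mu)]=1$ when $a=p-1$. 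These cases are permitted by the hypothesis $0<a,c<p$, and in them your identification $b_\mu=\nu_p(a+b+c+2)$ fails (for instance $a+b+1=p$ and $c=p-1$ give $b_\mu=2$), so the step ``every $b_\xi\in\{0,1\}$, hence the upper bounds of Proposition~\ref{Insight_on_possible_composition_factors} are attained'' collapses. Note also that in such cases $a+b+c+2=2p$, so $\nu_p(a+b+c+2)=1$ while $\delta_{p,a+b+c+2}=0$, and the displayed formula then conflicts with the first assertion; your proof therefore needs either the implicit extra hypotheses $a,c\neq p-1$ (under which it goes through) or a separate treatment of these corner configurations. For what it is worth, the paper's proof glosses the same point: it asserts $\chi_\mu(\nu_2)=\ch L_G(\nu_2)$ unconditionally, even though its own general-$n$ expression $\chi_\mu(\nu_1)=\ch L_G(\nu_1)+\nu_p(c+n-2)\ch L_G(\mu)$, specialized to $n=3$, records precisely the exception $c=p-1$ that it subsequently drops.
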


\begin{proof} 
As in the proofs of Lemmas \ref{C.f. of [a,b,a] for type B} and \ref{C.f. of [a,b] for type B}, fix  $V_G(\lambda)=V^0\supsetneq V^1\supseteq \ldots \supseteq V^k\supseteq 0$ a Jantzen filtration for $V_G(\lambda).$ Arguing as in the aforementioned proofs, one first checks that 
$$
\nu^c_\mu(T_\lambda)=\nu_p(a+b+1)\chi_\mu(\nu_1) + \nu_p(b+c+1)\chi_\mu(\nu_2) + \nu_p(a+b+c+n-1)\chi_\mu(\mu).
$$
Now observe that $\chi_\mu(\nu_1)=\ch L_G(\nu_1) + \nu_p(c+n-2)\ch L_G(\mu),$ $\chi_\mu(\nu_2)=\ch L_G(\nu_2),$  and $\chi_\mu(\mu)=\ch L_G(\mu).$ Therefore an application of  Proposition \ref{Insight_on_possible_composition_factors}  shows that $\mu$ affords the highest weight of a composition factor if and only if $p\mid (a+b+c+n-1)$ as desired. Finally, if $n=3,$ then we get that $\nu_p(a+b+1)=\delta_{p,a+b+1},$ $\nu_p(b+c+1)=\delta_{p,b+c+1},$ and $\nu_p(a+b+c+2)=\delta_{p,a+b+c+2}.$ Also $\chi_\mu(\nu_1)=\ch L_G(\nu_1)$ in this case, so that an application of Proposition \ref{Insight_on_possible_composition_factors} completes the proof.
\end{proof}

\begin{prop}\label{Weight_multiplicities}
Let $G,$ $\lambda,$ and $\mu$  be as in Table \textnormal{\ref{Table_of_multiplicities}}, with $a,b,c\in \mathbb{Z}_{>0}$ such that $ a+b+1=p.$ Then the multiplicity   of $\mu$ in  $V_G(\lambda),$ respectively   $L_G(\lambda),$ is given in the fourth, resp. fifth, column of the table.
\begin{table}[h]
\center
\begin{tabular}{ccccc}
\hline\hline  \\[-1.5ex]
$G$			&	$\lambda$	& $\mu$	&	$\m_{V_G(\lambda)}(\mu)$ &	$\m_{L_G(\lambda)}(\mu)$\\[0.15cm] 
\hline\hline  \\[-1.5ex]
$A_3$		& $a\lambda_1+b\lambda_2+c\lambda_3$ & $\lambda-111$ &$ 4$   &$ 3-\delta_{a,c}$\\[0.15cm]  
\hline \\[-1.5ex]
$A_4$ 		& $a\lambda_1+b\lambda_2+a\lambda_3$  & $\lambda-1121$  & $6-2\delta_{a,1}$  &$ 3-\delta_{a,1}$   \\[0.15cm]  
\hline \\[-1.5ex]
$B_2$		& $  a\lambda_1+b\lambda_2$ & $\lambda-12$  & $ 3-\delta_{b,1}$  &$2$  \\[0.15cm] 
\hline\cr
\end{tabular}
\caption{Some weight multiplicities in various irreducibles. Here $a,b,c\in \mathbb{Z}_{>0}$ are such that $ a+b+1=p.$}
\label{Table_of_multiplicities}
\end{table}
\end{prop}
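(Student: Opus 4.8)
The plan is to handle the two columns by completely different means. The Weyl-module multiplicity $\m_{V_G(\lambda)}(\mu)$ is characteristic-free, so it can be read off from the corresponding complex module; the irreducible multiplicity $\m_{L_G(\lambda)}(\mu)$ is then recovered from the composition-factor structure of $V_G(\lambda)$ through the identity
$$
\m_{L_G(\lambda)}(\mu)=\m_{V_G(\lambda)}(\mu)-\sum_{\substack{\mu\preccurlyeq\nu\prec\lambda\\ \nu\in X^+(T)}}[V_G(\lambda),L_G(\nu)]\,\m_{L_G(\nu)}(\mu).
$$
Thus the proof splits into three tasks: (a) compute $\m_{V_G(\lambda)}(\mu)$; (b) determine the dominant $\nu$ with $\mu\preccurlyeq\nu\prec\lambda$ affording composition factors, together with the multiplicities $[V_G(\lambda),L_G(\nu)]$; and (c) evaluate each $\m_{L_G(\nu)}(\mu)$ and substitute.

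For (a) I would apply Freudenthal's formula directly, exactly as in the proof of Proposition~\ref{8.6_Seitz}, to obtain the value in terms of $a,b,c$. In each row $\mu$ is dominant for generic parameters but degenerates at a boundary: for $B_2$ one computes $\mu=a\lambda_1+(b-2)\lambda_2$, non-dominant precisely when $b=1$, and for $A_4$ one computes $\mu=(a-1)\lambda_1+(b+1)\lambda_2+(a-2)\lambda_3$, non-dominant precisely when $a=1$. In those cases I would replace $\mu$ by its dominant $\mathscr{W}$-conjugate before reading off the multiplicity, and it is exactly this passage that produces the corrections $-\delta_{b,1}$ and $-2\delta_{a,1}$ in the fourth column; no such boundary occurs for $A_3$, consistent with the absence of a $\delta$-term there.

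For (b) and (c) I would invoke the composition-factor results already established. For $B_2$ (row three), Lemma~\ref{C.f. of [a,b] for type B} with $n=2$ shows that $L_G(\mu)$ is the unique factor below $\lambda$ meeting $\mu$, of multiplicity one, so for $b\geq 2$ only $\m_{V_G(\lambda)}(\mu)-1=2$ survives, while the case $b=1$ is settled by the linkage principle (Proposition~\ref{Corollary to The linkage principle}), which shows the dominant conjugate affords no new factor. For $A_3$ (row one), Lemma~\ref{C.f. of [a,b,...,c] for type A} with $n=3$ gives the three candidate factors $L_G(\lambda-110)$, $L_G(\lambda-011)$, $L_G(\mu)$ with multiplicities $\delta_{p,a+b+1}$, $\delta_{p,b+c+1}$, $\delta_{p,a+b+c+2}$, while Premet's theorem \cite{premet} discards the multiplicity-one weights $\lambda-100,\lambda-010,\lambda-001,\lambda-101$; using $a+b+1=p$ these three multiplicities become $1,\ \delta_{a,c},\ \delta_{c,p-1}$. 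Since $\mu$ is obtained from each of $\lambda-110$, $\lambda-011$ and $\mu$ by dropping a single simple root, the corresponding $\m_{L_G(\nu)}(\mu)$ equals $1$ whenever $\nu$ is $p$-restricted. The delicate point is $c=p-1$: then $\lambda-110$ has $\lambda_3$-coefficient equal to $p$, so by Steinberg's tensor product theorem $\mu\notin\Lambda(L_G(\lambda-110))$ and that factor contributes nothing, cancelling the $\delta_{c,p-1}$ term and leaving $\m_{L_G(\lambda)}(\mu)=4-1-\delta_{a,c}=3-\delta_{a,c}$. The $A_4$ row (row two) I would treat by the same recipe, but computing the required contributions directly from the Jantzen $p$-sum formula via Theorem~\ref{How_to_determine_contributions} and Proposition~\ref{Insight_on_possible_composition_factors}, since its weight $a\lambda_1+b\lambda_2+a\lambda_3$ is not of the shape covered by the stated lemmas.

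The main obstacle is the bookkeeping of these boundary degenerations rather than any single hard estimate. One must correctly detect when $\mu$ leaves the dominant chamber (producing the $\delta$-corrections in the Weyl column) and, dually, when a candidate composition factor acquires a non-restricted highest weight, so that Steinberg's theorem forces its $\mu$-multiplicity to vanish and the naive alternating count collapses to the tabulated value. Checking that these two phenomena never occur simultaneously and that together they yield exactly the stated $\delta$-patterns is the subtle part; the underlying Jantzen-sum computations themselves are routine and run parallel to those in Lemmas~\ref{C.f. of [a,b,a] for type B}, \ref{C.f. of [a,b] for type B} and~\ref{C.f. of [a,b,...,c] for type A}.
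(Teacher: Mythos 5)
Your proposal is correct and follows essentially the same route as the paper: a characteristic-free computation of the Weyl-module multiplicities (the paper cites Cavallin's recursion where you invoke Freudenthal, and it handles the $b=1$ and $a=1$ degenerations by exactly your dominant-conjugate reduction, $\mu\sim\lambda-11$ for $B_2$ and $\mu\sim\lambda-1110$ for $A_4$), followed by the same composition-factor bookkeeping via \cite{premet}, Proposition~\ref{Corollary to The linkage principle}, Proposition~\ref{8.6_Seitz} and Lemmas~\ref{C.f. of [a,b] for type B} and \ref{C.f. of [a,b,...,c] for type A}, with your Steinberg tensor-product argument at $c=p-1$ matching the paper's observation that $\mu_1=(a-1)\lambda_1+(b-1)\lambda_2+p\lambda_3$ has $\mu$-multiplicity zero. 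The only divergence is the $A_4$ row, where the paper avoids fresh Jantzen-sum computations by discarding the five candidates $\lambda-1110$, $\lambda-0120$, $\lambda-0121$, $\lambda-1120$, $\mu$ with the linkage principle and evaluating the two surviving factors $\lambda-1100$ and $\lambda-0110$ (with $\mu$-multiplicities $1$ and $2$) via Proposition~\ref{8.6_Seitz}, whereas you propose computing contributions directly through Theorem~\ref{How_to_determine_contributions} and Proposition~\ref{Insight_on_possible_composition_factors} --- heavier machinery, but equally valid.
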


\begin{proof}
Let $G,$ $\lambda,$ and $\mu$ be as in the first row of Table \ref{Table_of_multiplicities}, and start by observing that an application of  \cite[Proposition 3]{Cavallin} yields  $\m_{V_G(\lambda)}(\mu)=4.$ Also, the weights $\lambda-100,$ $\lambda-010,$ $\lambda-001,$ and $\lambda-101$ all have multiplicity one in $V_G(\lambda)$ and hence none of them can afford the highest weight of a composition factor of $V_G(\lambda)$ by \cite{premet}. Setting $\mu_1=\lambda-110$ and $\mu_2=\lambda-011,$ we have $[V_G(\lambda),L_G(\mu_1)]=1$ by Proposition \ref{8.6_Seitz}, so that
\begin{equation}
\m_{L_G(\lambda)}(\mu)	=	\m_{V_G(\lambda)}(\mu)  - \m_{L_G(\mu_1)}(\mu) -[V_G(\lambda),L_G(\mu_2)]\m_{L_G(\mu_2)}(\mu) 	-[V_G(\lambda),L_G(\mu)].
\label{Equation_in_Proposition_on_multiplicities}
\end{equation}
Now if $a=c,$ then $\mu_2$ also affords the highest weight of exactly one composition factor of $V_G(\lambda)$ by Proposition \ref{8.6_Seitz}, and one easily sees that $\m_{L_G(\mu_1)}(\mu)=\m_{L_G(\mu_2)}(\mu)=1.$ Also, applying Proposition   \ref{Corollary to The linkage principle} yields $[V_G(\lambda),L_G(\mu)]=0,$ from which the desired result follows. If on the other hand $a\neq c,$ then $[V_G(\lambda),L_G(\mu_2)]=0,$ and we must consider separately the cases  $c=p-1$ or $c\neq p-1.$ In the former case, we get that $[V_G(\lambda),L_G(\mu)]=1$ by Lemma \ref{C.f. of [a,b,...,c] for type A}, while $\m_{L_G(\mu_1)}(\mu)=0,$ as $\mu_1=(a-1)\lambda_1+(b-1)\lambda_2+p\lambda_3$ in this case. The assertion then immediately follows from \eqref{Equation_in_Proposition_on_multiplicities}. Finally, if $c\neq p-1,$ then $\mu$ does not afford the highest weight of a composition factor of $V_G(\lambda)$ by Lemma \ref{C.f. of [a,b,...,c] for type A}, while $\m_{L_G(\mu_1)}(\mu)=1,$ as $\mu_1$ is $p$-restricted in this case. Again \eqref{Equation_in_Proposition_on_multiplicities} then yields the desired result. 

Next consider $G,$ $\lambda,$ $\mu$ as in the second row of Table \ref{Table_of_multiplicities} and observe that if $a=1,$ then the result is an immediate consequence of the previous case, as $\mu$ is conjugate to $\lambda-1110$ in this situation. So assume $a>1$ in the remainder of the argument. A recursive application of \cite[Proposition 1, Theorem 2]{Cavallin} yields $\m_{V_G(\lambda)}(\mu)=6.$ Also neither $\lambda-1110,$ $\lambda-0120,$ $\lambda-0121,$ $\lambda-1120,$ nor $\mu$ can afford the highest weight of a composition factor of $V_G(\lambda)$ by Proposition \ref{Corollary to The linkage principle}. The remaining potential highest weights of composition factors, except for $\mu_1=\lambda-1100$ and $\mu_2=\lambda-0110,$ all have multiplicity $1$ in $V_G(\lambda),$ so that   
$$
\m_{L_G(\lambda)}(\mu)=\m_{V_G(\lambda)}(\mu)-[V_G(\lambda),L_G(\mu_1)]\m_{L_G(\mu_1)}(\mu)-[V_G(\lambda),L_G(\mu_2)]\m_{L_G(\mu_2)}(\mu),
$$
by \cite{premet}. Finally, notice that $\m_{L_G(\mu_1)}(\mu)=1,$  while $\m_{L_G(\mu_2)}(\mu)=2$ by Proposition \ref{8.6_Seitz}, hence the result in this situation as well. 

Finally, let $G,$ $\lambda,$ $\mu$ be as in the third row of the table. If $b=1,$ then $\mu$ is conjugate to $\lambda-11,$ whose multiplicity in $V_G(\lambda)$ equals $2.$ An application of Proposition \ref{Corollary to The linkage principle} then yields the desired result in this case. If on the other hand $b>1,$ then $\mu$ is dominant and $\m_{V_G(\lambda)}(\mu)=3$ by \cite[Proposition 1, Theorem 2]{Cavallin}. By \cite{premet}, $[V_G(\lambda),L_G(\lambda-10)]=[V_G(\lambda),L_G(\lambda-01)]=0,$ while applying Proposition \ref{Corollary to The linkage principle}  shows that $\lambda-11$ does not afford the highest weight of a composition factor of $V_G(\lambda).$ Therefore $\m_{L_G(\lambda)}(\mu)=\m_{V_G(\lambda)}(\mu)-[V_G(\lambda),L_G(\mu)]$ and hence the assertion   follows from Lemma \ref{C.f. of [a,b] for type B}.  
\end{proof}

\subsection{Proof of Theorem \ref{F_4_in_E_6} and conclusion}
\label{3rd_result:conclusion}

Let $Y$ be a simple algebraic group of type $E_6$ over $K,$ and throughout this section, assume Hypothesis~\ref{inductive_hypothesis} for all embeddings $H\subset G$ with $\rank G < \rank Y.$ Fix $T_Y$ a maximal torus of $Y$ and let $B_Y$ be a Borel subgroup of $Y$ containing $T_Y.$ Let $\Pi(Y)=\{\alpha_1,\ldots,\alpha_6\}$ denote the corresponding base for the root system $\Phi(Y)=\Phi^+(Y) \sqcup \Phi^-(Y)$ of $Y,$ where $\Phi^+(Y)$ and $\Phi^-(Y)$ denote the set of positive and negative roots, respectively. Also write $\lambda_1,\ldots,\lambda_6$ for the associated fundamental dominant weights. Consider the subgroup $X$ of type $F_4$ defined by
$$X=\langle x_{\pm \beta_j}(c):1\leq j\leq 4,~c\in K \rangle,$$
where $x_{\pm \beta_1}(c)=x_{\pm \alpha_2}(c),$ $x_{\pm \beta_2}(c)=x_{\pm \alpha_4}(c),$ $x_{\pm \beta_3}(c)=x_{\pm \alpha_3}(c)x_{\pm \alpha_5}(c),$ and $x_{\pm \beta_4}(c)=x_{\pm \alpha_1}(c)x_{\pm \alpha_6}(c),$ for $c\in K.$ Let $T_X$ be the maximal torus of $X$ defined by the $  x_{\pm \beta_i}(c)$ (see \cite[Section 4.3]{Carter}), and let $B_X$ be the Borel subgroup of $X$ generated by the $x_{\pm \beta_i}(c)$ and $T_X,$ so that $\Pi(X)=\{\beta_1,\beta_2,\beta_3,\beta_4\}$ is a corresponding base for the root system $\Phi(X)$ of $X.$ (Here again $\Phi(X)=\Phi^+(X)\sqcup \Phi^-(X)$ in the obvious way.) We first recall an argument from \cite{test}.

\begin{lem}\label{thm2_red} Let $X$ and $Y$ be as above and $\lambda\in X^+(T_Y)$ a non-trivial $p$-restricted weight such that 
$L_Y(\lambda)|_X$ is irreducible. Then up to graph automorphism, one of the following holds.
\begin{enumerate}
\item[\rm (i)]$\lambda = a\lambda_2+a\lambda_3+b\lambda_4$ for some $a,b\in \Z_{>0},$ with  $a+b=p-1$.
\item [\rm (ii)]  $\lambda=(p-3)\lambda_1$, for $p>3.$
\item [\rm (iii)] $\lambda = \lambda_1+(p-2)\lambda_3$, for $p>2$.
\end{enumerate} 
\end{lem}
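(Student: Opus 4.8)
The plan is to restrict $V=L_Y(\lambda)$ to $X=F_4$ and exploit the fact that the two pairs of simple roots $\{\alpha_1,\alpha_6\}$ and $\{\alpha_3,\alpha_5\}$ of $Y$ fold onto the single simple roots $\beta_4$ and $\beta_3$ of $X$, while $\alpha_2,\alpha_4$ are fixed and restrict to $\beta_1,\beta_2$. Since $X$ acts irreducibly it acts homogeneously, so by \cite{smith} the restriction $\lambda|_{T_X}$ affords the highest weight of the unique composition factor; writing $\lambda=\sum_{i=1}^{6}a_i\lambda_i$, exactly as in the proof of Proposition~\ref{other_embeddings} this weight is $\omega=a_2\omega_1+a_4\omega_2+(a_3+a_5)\omega_3+(a_1+a_6)\omega_4$, and irreducibility of $V|_X$ is equivalent to $V|_X\cong L_X(\omega)$. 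In particular, for every dominant $T_X$-weight $\chi\preccurlyeq\omega$ one must have $\sum_{\nu}\m_V(\nu)=\m_{L_X(\omega)}(\chi)$, the sum running over all $T_Y$-weights $\nu$ of $V$ with $\nu|_{T_X}=\chi$.

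First I would run this equality at level one. Because $\alpha_1|_{T_X}=\alpha_6|_{T_X}=\beta_4$ and $\alpha_3|_{T_X}=\alpha_5|_{T_X}=\beta_3$, the pair $\lambda-\alpha_1,\lambda-\alpha_6$ (resp.\ $\lambda-\alpha_3,\lambda-\alpha_5$) restricts to the single weight $\omega-\beta_4$ (resp.\ $\omega-\beta_3$). Each $\lambda-\alpha_i$ is a weight of multiplicity one precisely when $a_i\neq0$, whereas $\omega-\beta_4$ and $\omega-\beta_3$ have multiplicity at most one in $L_X(\omega)$; hence $a_1a_6=0$ and $a_3a_5=0$. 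Applying the graph automorphism of $Y$, which interchanges the labels $a_1\leftrightarrow a_6$ and $a_3\leftrightarrow a_5$ simultaneously, I reduce, up to graph automorphism, to the two label-supports $\{\lambda_1,\lambda_2,\lambda_3,\lambda_4\}$ and $\{\lambda_1,\lambda_2,\lambda_4,\lambda_5\}$.

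With the nonzero labels narrowed, I would iterate the multiplicity comparison at level two and beyond, now pairing off $\sigma$-conjugate $T_Y$-weights that fold together and comparing the resulting sum against the corresponding multiplicity in $L_X(\omega)$. Here \cite{premet} guarantees multiplicity one for weights $\mathscr{W}$-conjugate to some $\lambda-\alpha_i$, and the strong linkage bound of Proposition~\ref{Corollary to The linkage principle} discards many putative second composition factors. This should eliminate the branch supported in $\{\lambda_1,\lambda_2,\lambda_4,\lambda_5\}$ altogether, and in the branch supported in $\{\lambda_1,\lambda_2,\lambda_3,\lambda_4\}$ force the configurations of (i)--(iii). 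The relation $a_2=a_3$ and the congruence $a_2+a_4=p-1$ of case~(i) are then extracted from a finer analysis of $L_X(\omega)$: restricting $\omega$ to the $B_3$-Levi $\langle\beta_1,\beta_2,\beta_3\rangle$ of $X$, where it takes the shape $a\lambda_1+b\lambda_2+a\lambda_3$, and invoking the type-$A$ criterion of Proposition~\ref{8.6_Seitz} together with the structure recorded in Lemma~\ref{C.f. of [a,b,a] for type B}. A final comparison of $\dim V$ with $\dim L_X(\omega)$ using \cite{luebeck} confirms that no further relations are needed.

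I expect the main obstacle to be exactly this level-$\geq 2$ bookkeeping. For each surviving configuration of labels one must exhibit a dominant $T_X$-weight whose multiplicity in $V|_X$ provably exceeds its multiplicity in $L_X(\omega)$, and the folding of $\sigma$-orbits makes the accounting delicate: a single $T_X$-weight collects contributions from several $T_Y$-weights of differing $Y$-multiplicities, and deciding whether a given $\lambda-\sum c_r\alpha_r$ actually occurs in $V$ when some of the involved labels vanish requires care. Controlling these multiplicities—by combining the root-system combinatorics of $F_4\subset E_6$, Premet's theorem, and the linkage principle—is the technical heart of the argument.
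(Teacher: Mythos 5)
Your opening reduction is sound: homogeneity plus the folding of $\sigma$-orbits gives $\omega=a_2\omega_1+a_4\omega_2+(a_3+a_5)\omega_3+(a_1+a_6)\omega_4$, and the level-one comparison (both $\lambda-\alpha_1$ and $\lambda-\alpha_6$ restrict to $\omega-\beta_4$, which has multiplicity at most $1$ in $L_X(\omega)$) correctly forces $a_1a_6=0=a_3a_5$, hence the two supports up to graph automorphism. From that point on, however, there is a genuine gap: everything that makes the lemma true is deferred to unspecified ``level $\geq 2$ bookkeeping,'' which you yourself flag as the obstacle. The paper does not attempt this bookkeeping, and for good reason: it runs the parabolic machinery instead. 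By \cite{smith} (via \cite[2.1]{seitz}), the $C_3$-Levi of $X$ acts irreducibly on the commutator quotient $V/[V,Q_Y]$, an irreducible module for the $A_5$-Levi $\langle\alpha_1,\alpha_3,\alpha_4,\alpha_5,\alpha_6\rangle$ of $Y$, so Hypothesis~\ref{inductive_hypothesis}(iv) confines $(a_1,a_3,a_4,a_5,a_6)$ to the known $(C_3,A_5)$ triples of \cite[Table 1]{seitz}; in parallel, the $B_3$-Levi of $X$ inside the $D_4$-Levi $\langle\alpha_2,\alpha_3,\alpha_4,\alpha_5\rangle$ of $Y$ is governed by Theorem~\ref{Main_result_1} and, among other things, shows some $a_i\neq 0$ for $i\in\{1,3,4,5,6\}$. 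This collapses $\lambda$ at a stroke to four explicit families ($a\lambda_1$; $(p-1)\lambda_3+x\lambda_2$; $c\lambda_1+b\lambda_3+x\lambda_2$ with $b+c=p-1$; $b\lambda_3+a\lambda_4+x\lambda_2$ with $a+b=p-1$), after which the sharp constraints ($a=p-3$; $x=0$ and $b=p-2$; $x\neq 0$ in the last family) are imported from the multi-page computation of \cite[(5.4)]{test}. You never invoke Smith's theorem or Hypothesis~\ref{inductive_hypothesis}, even though the lemma explicitly assumes the latter; without the inductive classification you would be reproving classification-strength statements by raw multiplicity counts in $E_6$-modules with up to four free $p$-dependent labels, which is not achievable ``at level two and beyond'' and is nowhere carried out (the elimination of the $\{\lambda_1,\lambda_2,\lambda_4,\lambda_5\}$ branch is likewise only asserted).

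A second, more pointed misstep concerns case (i). The tools you name for extracting $a_2=a_3$ and $a_2+a_4=p-1$ --- Proposition~\ref{8.6_Seitz} and Lemma~\ref{C.f. of [a,b,a] for type B} applied to the $B_3$-Levi of $X$ --- live entirely on the $X$-side: they describe composition factors of Weyl modules $V_X(\omega)$ and can only help bound $\m_{L_X(\omega)}(\cdot)$; by themselves they cannot force congruences on $\lambda$, since the constraint is a comparison between the $Y$-side and the $X$-side. Indeed, in the paper those two results appear later, in the proof of Theorem~\ref{F_4_in_E_6}, for the \emph{opposite} purpose: to show that the family (i) is in fact reducible. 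The necessity of $a_2=a_3$ comes from the $Y$-side, namely Theorem~\ref{Main_result_1} applied to the Levi embedding $B_3\subset D_4$, where the restricted weight has labels $(x,a,b)$ and the congruence $p\mid x+a+1$ together with $a+b=p-1$ forces $x=b$. Finally, your closing ``dimension comparison via \cite{luebeck}'' is unavailable for these unbounded, $p$-parametrized families of highest weights.
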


\begin{proof} First we note that Theorem~\ref{Main_result_1} applied to the Levi embedding $B_3\subset D_4$ implies that there exists $i\in\{1,3,4,5,6\}$ such that 
$\langle\lambda,\alpha_i\rangle\ne 0$. Let $L_X'$ be the $C_3$ Levi factor of $X$, which lies in $L_Y'$, an 
$A_5$-type Levi factor of $Y$. Then by Hypothesis~\ref{inductive_hypothesis} (iv), Theorem~\ref{Main_result_1}, and up to taking graph automorphisms,  
$\lambda\in\{a\lambda_1, (p-1)\lambda_3+x\lambda_2$ $(p\neq 2),$ $ c\lambda_1+b\lambda_3+x\lambda_2$ $(cb\ne 0,$ $b+c=p-1),$ $b\lambda_3+a\lambda_4+x\lambda_2$ $(ab\ne 0,$ $a+b=p-1)\}$. We now refer the reader to the proof of \cite[(5.4)]{test}, which establishes that if $\lambda=a\lambda_1,$ then $p>3,$ $a=p-3$, and $\lambda\ne(p-1)\lambda_3+x\lambda_2$. The same proof shows that if $\lambda= c\lambda_1+b\lambda_3+x\lambda_2$, with $cb\ne 0$ and $b+c=p-1$, then $p>2$, $x=0$ and $b=p-2$. 
Finally, in the last case, where $\lambda=b\lambda_3+a\lambda_4+x\lambda_2$, with $ab\ne 0$ and $a+b=p-1$, the same
proof shows that
$x\ne 0$, and then Theorem~\ref{Main_result_1} shows that (i) holds. \end{proof}

We are now ready to give a proof of Theorem \ref{F_4_in_E_6}, and to conclude by giving a proof of Theorem \ref{Main_result_3}.

\begin{proof}[\textbf{Proof of Theorem \ref{F_4_in_E_6}}]
Let $\lambda\in X^+(T_Y)$ be a non-zero, $p$-restricted, dominant weight for $T_Y,$ and write $V=L_Y(\lambda)$ for the 
corresponding irreducible $KY$-module. First observe that if $\lambda$ and $p$ are as in (i) or (ii)  of Theorem \ref{Main_result_3}, then $X$ acts irreducibly on 
$V=L_Y(\lambda)$ by \cite[5.6, 5.7]{test}.  
By Lemma~\ref{thm2_red}, in order to complete the proof, it only remains to show that $V|_X$ is reducible in the situation where $\lambda=a\lambda_2+a\lambda_3+b\lambda_4$ for some $a,b\in \mathbb{Z}_{>0}$ such that $a+b+1=p,$ which we shall assume holds in the remainder of the proof. Here the restriction of $\lambda$ to $T_X$ is given by  $\omega=a\omega_1+b\omega_2+a\omega_3 \in X^+(T_X).$ Consider the dominant $T_X$-weight 
$$
\omega'=\omega-\beta_1-\beta_2-2\beta_3-\beta_4\in X^+(T_X).
$$
Then $\lambda-\alpha_1-\alpha_2-2\alpha_3-\alpha_4,$ $\lambda-\alpha_1-\alpha_2-\alpha_3-\alpha_4-\alpha_5,$ and $\lambda-\alpha_2-\alpha_3-\alpha_4-\alpha_5-\alpha_6$ are all $T_Y$-weights of $V$ restricting to $\omega'.$ Also the latter two are $\mathscr{W}_Y$-conjugate to $\lambda-\alpha_2-\alpha_3-\alpha_4,$ yielding  
$$
\m_{V|_X}(\omega') \geq \m_V(\lambda-\alpha_1-\alpha_2-2\alpha_3-\alpha_4) + 2\m_V(\lambda-\alpha_2-\alpha_3-\alpha_4).
$$
By applying Proposition \ref{Weight_multiplicities} to the Levi subgroups of $Y$ corresponding to the simple roots 
$\alpha_1,\ldots,\alpha_4$, respectively  $\alpha_2,\alpha_3,\alpha_4,$ we   get
\begin{equation}
\m_{V|_X}(\omega') \geq 7-\delta_{a,1}.
\end{equation}

Next observe that recursively applying  \cite[Theorem 2]{Cavallin} yields  $
\m_{V_X(\omega)}(\omega')=8-2\delta_{a,1}.$ Also, the $T_X$-weight $\mu=\omega-\beta_1-\beta_2$ affords the highest weight of a composition factor of $V_X(\omega)$ by Proposition \ref{8.6_Seitz}, namely
$$
L_X(\mu)=(a-1)\omega_1+(b-1)\omega_2+(a+2)\omega_3.
$$
We now compute an upper bound for $\m_{L_X(\omega)}(\omega')$ by dealing with each of the following four possibilities separately.
 
\begin{enumerate}
\item If $a=1$ and $p \neq 3,$ then  $\mu$ is $p$-restricted and so $\m_{L_X(\mu)}(\omega')=1.$ Therefore we have  $\m_{L_X(\omega)}(\omega')\leq \m_{V_X(\omega)}(\omega')-1=5<6=\m_{V|_X}(\omega).$

\item If $a>1$ and $ a \neq p-2,$ then again $\mu$ is $p$-restricted and so $\m_{L_X(\mu)}(\omega')=1.$ Also by Lemma  \ref{C.f. of [a,b] for type B}, the weight $\nu=\omega  -\beta_2 - 2\beta_3$ affords the highest weight of a composition factor of $V_X(\omega),$ namely
$$
L_X(\nu)=(a+1)\omega_1 + b\omega_2+(a-2)\omega_3+2\omega_4.
$$
As $\m_{L_X(\nu)}(\omega')=1,$ we have $\m_{L_X(\omega)}(\omega')\leq 6<7=\m_{V|_X}(\omega').$
\item If $a=1$ and $p=3,$ then $\mu$ is not $p$-restricted and $\m_{L_X(\omega)}(\omega')=0.$ However, applying Lemma \ref{C.f. of [a,b,a] for type B} shows that  $\omega-\beta_1-\beta_2-\beta_3=\omega_2+\omega_3+\omega_4$ affords the highest weight of a composition factor of $V_X(\omega).$ Consequently, as $\m_{L_X(\omega_2+\omega_3+\omega_4)}(\omega')=1,$ we get that $\m_{L_X(\omega)}(\omega')\leq 5<6=\m_{V|_X}(\omega').$
\item If $a>1$ and $ a =p-2,$ then again $\omega'$ is not a weight of $L_X(\mu).$ However, an application of Lemma  \ref{C.f. of [a,b,a] for type B} (resp. Lemma \ref{C.f. of [a,b] for type B}) to the Levi subgroup of $X$ corresponding to the simple roots $\beta_1,\beta_2,\beta_3$ (resp. $\beta_2,\beta_3$) yields the existence of a composition factor of $V_X(\omega)$ having highest weight $\omega-\beta_1-\beta_2-\beta_3$ (resp. $\omega-\beta_2-2\beta_3$). Therefore, as $\mu$ is a weight of each of these irreducible, we get that $\m_{L_X(\omega)}(\omega')\leq 6<7=\m_{V|_X}(\omega').$
\end{enumerate}
Consequently, in each case, we get that $\m_{V|_X}(\omega')>\m_{L_X(\omega)}(\omega'),$ showing the existence of a second composition factor of $V$ for $X.$ In particular $V|_X$ is reducible. This completes the proof of Theorem \ref{F_4_in_E_6}.
\end{proof}

\begin{proof}[\textbf{Proof of Theorem \ref{Main_result_3}}]
Let $X \subset Y=E_n$ be as in the statement of Theorem~\ref{Main_result_3}. We embed $X$ in a maximal 
proper closed connected subgroup of $Y$ and deduce by the remarks at the beginning of Section~\ref{exceptional}, Proposition~\ref{other_embeddings}, and 
Theorem~\ref{F_4_in_E_6},
that $X$ lies in the $F_4$ subgroup of $Y=E_6$. So we must determine the irreducible configurations 
$X \subset F_4$ acting irreducibly on the irreducible $F_4$-module with highest weight $\lambda\in\{(p-3)\lambda_1 \ 
(p>3), \lambda_1+(p-2)\lambda_3\  (p>2)\}$; this is covered by \cite[Main Theorem]{test}. Then \cite[Main Theorem (iii), (iv)]{test}
 completes the proof of Theorem~\ref{Main_result_3}.
\end{proof}

\end{document}